\newtheorem{theorem}{Theorem}[section]
\newtheorem{proposition}[theorem]{Proposition}
\newtheorem{lemma}[theorem]{Lemma}
\newtheorem{definition}[theorem]{Definition}
\newtheorem{corollary}[theorem]{Corollary}
\newtheorem{example}[theorem]{Example}
\newtheorem{remark}[theorem]{Remark}
\numberwithin{equation}{section}
\newcommand{\SSSS}{{\mathfrak S}}
\newcommand{\Ppp}{{\mathbb P}}
\newcommand{\Rrr}{{\mathbb R}}
\newcommand{\Zzz}{{\mathbb Z}}
\DeclareMathOperator{\conv}{conv}
\DeclareMathOperator{\DP}{DP}
\DeclareMathOperator{\SP}{SP}
\DeclareMathOperator{\Head}{Head}
\DeclareMathOperator{\Tail}{Tail}
\DeclareMathOperator{\tw}{tw}
\begin{document}

\title{Simion's type $B$ associahedron is a pulling triangulation of the
  Legendre polytope}

\author{Richard EHRENBORG, G\'abor HETYEI and Margaret READDY}

\address{Department of Mathematics, University of Kentucky, Lexington,
  KY 40506-0027.\hfill\break \tt http://www.math.uky.edu/\~{}jrge/,
  richard.ehrenborg@uky.edu.}

\address{Department of Mathematics and Statistics,
  UNC-Charlotte, Charlotte NC 28223-0001.\hfill\break
\tt http://www.math.uncc.edu/\~{}ghetyei/,
ghetyei@uncc.edu.}

\address{Department of Mathematics, University of Kentucky, Lexington,
  KY 40506-0027.\hfill\break
\tt http://www.math.uky.edu/\~{}readdy/,
margaret.readdy@uky.edu.}

\subjclass[2000]
{Primary
52B05, 
52B12, 
52B15, 
Secondary
05A15, 
05E45} 

\date{\today}

\begin{abstract}
We show that Simion's type $B$ associahedron is combinatorially
equivalent to a pulling triangulation of a type $B$ root polytope
called the Legendre polytope. 
Furthermore, we show that every pulling triangulation of the
Legendre polytope yields a flag complex.
Our triangulation refines a decomposition
of the Legendre polytope given by Cho.  We extend  Cho's cyclic
group action to the triangulation in such a way that it corresponds to
rotating centrally symmetric triangulations of a regular $(2n+2)$-gon.
Finally, we present a bijection between the faces
of the Simion's type~$B$ associahedron
and Delannoy paths.
\end{abstract}

\maketitle

\section{Introduction}

Root polytopes arising as convex hulls of roots in a root system have
become the subject of intensive interest in recent
years~\cite{Ardila,Clark-Ehrenborg,
Gelfand-Graev-Postnikov,Hetyei-Legendre,Meszaros_I,Meszaros_II}.
Another important
area where geometry meets combinatorics is the study of noncrossing
partitions, associahedra and their generalizations. In this context
Simion~\cite{Simion} constructed a type $B$ associahedron whose facets
correspond to centrally symmetric triangulations of a regular $(2n+2)$-gon.
Burgiel and Reiner~\cite{Burgiel-Reiner}
described Simion's construction as
providing ``the first motivating example for an equivariant generalization
of fiber polytopes, that is, polytopal subdivisions which are
invariant under symmetry groups''.
It was recently observed by Cori and Hetyei~\cite{Cori-Hetyei} that the
face numbers in this type $B$ associahedron are the same as the face
numbers in any pulling triangulation of the boundary of a type~$B$ root
polytope, called the Legendre polytope in~\cite{Hetyei-Legendre}.

In this paper we show that the equality of these face numbers is not a
mere coincidence.
We prove the type~$B$ associahedron is combinatorially
equivalent to a pulling triangulation of the Legendre polytope $P_{n}$. The
convex hull of the positive roots among the vertices of the Legendre
polytope and of the origin is a type $A$ root polytope
$P_{n}^{+}$. Cho~\cite{Cho} has shown that the Legendre polytope $P_{n}$ may be
decomposed into copies of $P_{n}^{+}$ that meet only on their boundaries and
that there is a $\Zzz_{n+1}$-action on this decomposition. Our
triangulation representing the type~$B$ associahedron as a triangulation
of the Legendre polytope refines Cho's decomposition in such a way that
extends the $\Zzz_{n+1}$-action to the
triangulation. The effect of this $\Zzz_{n+1}$-action on the
centrally symmetric triangulations of the $(2n+2)$-gon is rotation.

Simion observed algebraically
that the number of $k$-dimensional
faces of
the type~$B$ associahedron is
given by the number of lattices paths
between $(0,0)$ and $(2n,0)$ taking 
$k$ up steps $(1,1)$,
$k$ down steps $(1,-1)$,
and
$n-k$ horizontal steps $(2,0)$.
Such paths are known as {\em balanced Delannoy paths}.
In Section~\ref{section_bijection}
we give a combinatorial proof
by providing
a bijection between the faces
of the type~$B$ associahedron
and Delannoy paths.
We give two presentations of this bijection,
one recursive and one non-recursive.

Our paper is structured as follows.
In the preliminaries we 
discuss
the Simion type~$B$ associahedron,
the Legendre polytope and pulling triangulations.
In Section~\ref{section_flag}
we show that every pulling triangulation of the Legendre polytope
is a flag complex.
We introduce
an arc representation of Simion's type~$B$ diagonals in
Section~\ref{section_arc} and obtain conditions for when a pair of
$B$-diagonals do not cross. A bijection
between the set of $B$-diagonals and the vertex set of the Legendre
polytope is obtained by the intermediary of our arc representation in
Section~\ref{section_embed}. We characterize when $B$-diagonals cross in
terms of crossing and nesting conditions on the arrows associated to the
vertices of the Legendre polytope. This characterization is used in
Section~\ref{section_triangulation} to define a triangulation of the boundary of
the Legendre polytope where each face corresponds to a face in the
type~$B$ associahedron. 
Since both complexes are flag and have
the same minimal non-faces,
we conclude that they are the same polytope.
We end this section by
describing the facets in our triangulation.
In Section~\ref{section_Cho}
we prove that our triangulation refines Cho's
decomposition and that his $\Zzz_{n+1}$-action corresponds to
rotating the regular $(2n+2)$-gon.   
In Section~\ref{section_bijection} we present a bijection
between faces of the
type~$B$ associahedron
and Delannoy paths.

We end the paper with comments and future research directions.

\section{Preliminaries}
\label{section_preliminaries}

\subsection{Simion's type $B$ associahedron}

Simion~\cite{Simion} introduced a simplicial complex denoted by~$\Gamma_{n}^{B}$ 
on  $n(n+1)$ vertices as follows. Consider a centrally 
symmetric convex $(2n+2)$-gon, and label its vertices in the clockwise
order with $1$, $2$, \ldots, $n$, $n+1$, $\overline{1}$, $\overline{2}$,
\ldots, $\overline{n}$,  $\overline{n+1}$. The vertices 
of $\Gamma_{n}^{B}$ are the {\em $B$-diagonals}, which are one of the two
following kinds: diagonals joining antipodal pairs of points, and
antipodal pairs of noncrossing diagonals. The diagonals joining
antipodal points are all pairs of the form
$\{i,\overline{i}\}$ satisfying $1\leq i\leq n+1$.
Simion calls such a $B$-diagonal a {\em diameter}.
The $B$-diagonals
that are antipodal pairs of noncrossing  diagonals are either of the form
$\{\{i,j\},\{\overline{i},\overline{j}\}\}$ satisfying $1\leq
i<i+1<j\leq n+1$ or of the form $\{\{i,\overline{j}\}, 
\{\overline{i},j\}\}$ satisfying $1\leq j< i\leq n+1$.

The simplicial complex  $\Gamma_{n}^{B}$ is the family of sets of pairwise
noncrossing $B$-diagonals. Simion showed the
simplicial complex $\Gamma_{n}^{B}$ is the boundary complex of an
$n$-dimensional convex polytope. 
This polytope
is also known as
the Bott--Taubes polytope~\cite{Bott_Taubes}
and
the cyclohedron~\cite{Markl}.
Simion also computed the face
numbers and $h$-vector.
See Theorem~1, Proposition~1
and Corollary~1 in~\cite{Simion}, respectively. These numbers turn
out to be identical with the face numbers and $h$-vector of any pulling
triangulation of the Legendre polytope.
We will discuss this polytope in the next subsection. We end with a fact
that is implicit in the work of  
Simion~\cite[Section~3.3]{Simion}.
\begin{lemma}[Simion]
\label{lemma_diameter}
Each facet $\Gamma_{n}^{B}$
of Simion's type~$B$ associahedron
contains exactly one $B$-diagonal of the form
$\{i,\overline{i}\}$ connecting an antipodal pair of points.  
\end{lemma}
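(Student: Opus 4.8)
The plan is to translate the statement into the geometry of the $(2n+2)$-gon and then combine a parity count with the observation that any two distinct diameters meet at the center. Write $\sigma$ for the central symmetry of the $(2n+2)$-gon, so that $\sigma$ interchanges $i$ and $\overline{i}$ for $1\leq i\leq n+1$; then a $B$-diagonal amounts to a $\sigma$-orbit of one or two pairwise noncrossing ordinary diagonals of the $(2n+2)$-gon, a single diagonal precisely when it is a diameter. A facet $F$ of $\Gamma_{n}^{B}$ is, by definition, a maximal collection of pairwise noncrossing $B$-diagonals; let $D(F)$ denote the set of ordinary diagonals occurring in the $B$-diagonals of $F$, a $\sigma$-invariant family of pairwise noncrossing diagonals. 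The quantity to be determined is the number $d$ of diameters in $F$, equivalently the number of $\sigma$-fixed diagonals in $D(F)$.

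The first step I would carry out, and the one requiring the most care, is to prove that $D(F)$ is a triangulation of the $(2n+2)$-gon. If it were not, some region $R$ of the induced subdivision would have at least four sides. If $\sigma(R)\neq R$, I would pick a diagonal $e$ of $R$; then $e$ and $\sigma(e)$ lie in the disjoint regions $R$ and $\sigma(R)$, so $\{e,\sigma(e)\}$ is a $B$-diagonal not in $F$ that crosses nothing in $D(F)$, contradicting the maximality of $F$. If $\sigma(R)=R$, then $R$ is a centrally symmetric polygon with an even number $2k\geq 4$ of sides, and the segment joining an antipodal pair of vertices of $R$ passes through the center, hence lies in the interior of $R$; this segment is a diameter that may be added to $F$, again a contradiction. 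Therefore $D(F)$ is a triangulation.

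Then I would count. A triangulation of a convex $(2n+2)$-gon uses exactly $2n-1$ diagonals, and since $\sigma$ fixes no vertex the only $\sigma$-fixed diagonals are diameters; hence the non-diameter diagonals of $D(F)$ split into $\sigma$-orbits of size two, so $d\equiv 2n-1\equiv 1\pmod 2$ and in particular $d\geq 1$. Conversely, any two distinct diameters both pass through the center of the polygon without being collinear, so they cross; since the $B$-diagonals of $F$ are pairwise noncrossing, $F$ contains at most one diameter, i.e.\ $d\leq 1$. Together these bounds give $d=1$, which is the assertion. (Alternatively, the statement can be extracted from Simion's analysis of the facets in \cite[Section~3.3]{Simion}, but the argument sketched here is self-contained.) I expect the only genuine obstacle to be the triangulation step, whose content is that a non-triangular region always admits an added diagonal that can be chosen compatibly with the central symmetry.
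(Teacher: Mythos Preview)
The paper does not give its own proof of this lemma; it simply attributes the statement to Simion and points to \cite[Section~3.3]{Simion}. Your self-contained argument is correct and is a genuine addition rather than a comparison target: maximality of $F$ forces $D(F)$ to be a full triangulation of the $(2n+2)$-gon, the parity of the diagonal count $2n-1$ forces at least one $\sigma$-fixed diagonal (a diameter), and the observation that any two distinct diameters meet at the center forces at most one.

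One place that deserves an extra sentence is the case $\sigma(R)=R$. You use implicitly that the center of the polygon lies in the \emph{interior} of $R$, so that the chosen diameter is a diagonal of $R$ and crosses nothing in $D(F)$. This is easy to justify: convexity of $R$ together with $\sigma(R)=R$ puts the midpoint of any $p\in R$ and $\sigma(p)$, namely the center, inside $R$; and the center cannot lie on $\partial R$, since polygon edges avoid the center and any diagonal of $D(F)$ through the center would itself be a diameter, which would split the polygon into two halves exchanged by $\sigma$ and hence preclude any $\sigma$-invariant region. With that remark the case analysis is airtight.
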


\subsection{The Legendre polytope or ``full'' type $A$ root polytope} 

Consider an $(n+1)$-dimensional Euclidean space
with orthonormal basis $\{e_1, e_2, \ldots, e_{n+1}\}$.
The convex hull of the vertices
$\pm 2e_1$, \ldots, $\pm 2 e_{n+1}$
is an $(n+1)$-dimensional cross-polytope.
The intersection of this cross-polytope with the hyperplane
$x_{1} + x_{2} + \cdots + x_{n+1} = 0$
is an $n$-dimensional centrally
symmetric polytope $P_{n}$ first studied by Cho~\cite{Cho}.
It is called
the {\em Legendre polytope}
in the work of Hetyei~\cite{Hetyei-Legendre},
since the polynomial
$\sum_{j=0}^{n} f_{j-1} \cdot ((x-1)/2)^{j}$
is the $n$th Legendre polynomial, where $f_{i}$ is the number of
$i$-dimensional faces in any pulling triangulation of the boundary of $P_{n}$.
See Lemma~\ref{lemma_any_pull} below.
Furthermore, it is called
the ``full'' type $A$ root polytope in the work of
Ardila--Beck--Ho\c{s}ten--Pfeifle--Seashore~\cite{Ardila}. It has
$n(n+1)$ vertices consisting of all points of the 
form $e_{i}-e_{j}$ where $i\neq j$. 

We use the shorthand notation $(i,j)$
for the vertex $e_{j}-e_{i}$ of the Legendre polytope $P_{n}$.
We may think of these vertices as the set of all
directed nonloop edges on the vertex set $\{1,2,\ldots,n+1\}$.
A subset of
these edges is contained in some face of $P_{n}$
exactly when there is no
$i \in \{1,2,\ldots,n+1\}$ that is both the head and the tail of a
directed edge.
Equivalently, the faces are described as follows.
\begin{lemma}
The faces of the Legendre polytope $P_{n}$
are of the form
$\conv(I \times J) = \conv(\{(i,j) \: : \: i \in I, j \in J\})$
where 
$I$ and $J$ are two non-empty disjoint subsets
of the set $\{1,2,\ldots,n+1\}$.
The dimension of a face is given by
$|I| + |J| - 2$.
A face is a facet if and only if the union of $I$ and $J$ is
the set $\{1,2,\ldots,n+1\}$.
\label{lemma_facets}
\end{lemma}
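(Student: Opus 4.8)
The plan is to read off the proper nonempty faces of $P_{n}$ from their outer
normal cones, i.e.\ by recording which linear functionals each face maximizes.
Since $P_{n}$ lies in the hyperplane $x_{1}+\cdots+x_{n+1}=0$, every affine
functional on the affine hull of $P_{n}$ is the restriction of a functional
$\phi_{c}\colon x\mapsto \sum_{i=1}^{n+1}c_{i}x_{i}$ on the ambient space, and
two coefficient vectors give the same functional on $P_{n}$ exactly when they
differ by a multiple of $(1,1,\ldots,1)$; in particular $\phi_{c}$ is
non-constant on $P_{n}$ iff the $c_{i}$ are not all equal. On the vertex
$(i,j)=e_{j}-e_{i}$ the functional takes the value $c_{j}-c_{i}$, so maximizing
$\phi_{c}$ over the vertices amounts to maximizing $c_{j}-c_{i}$ over ordered
pairs $i\neq j$.

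First I would show that each set $\conv(I\times J)$ with $I,J$ nonempty and
disjoint really is a proper face. Take $c_{i}=0$ for $i\in I$, $c_{j}=1$ for
$j\in J$, and $c_{k}=1/2$ for $k\notin I\cup J$. Then $c_{j}-c_{i}\leq 1$ for all
$i\neq j$, with equality precisely when $i\in I$ and $j\in J$; hence the
supporting hyperplane $\phi_{c}=1$ meets $P_{n}$ exactly in the convex hull of
$\{(i,j): i\in I,\ j\in J\}=I\times J$, and this is a proper face since it omits,
for instance, the vertex $(j,i)$ for any $i\in I$, $j\in J$. Distinct pairs
$(I,J)$ give distinct faces, because the vertex set $I\times J$ recovers $I$ as
its set of first coordinates and $J$ as its set of second coordinates.

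For the converse, let $F$ be an arbitrary proper nonempty face, so
$F=\{x\in P_{n}:\phi_{c}(x)=\mu\}$ with $\mu=\max_{x\in P_{n}}\phi_{c}(x)$ and the
$c_{i}$ not all equal. Put $m=\min_{i}c_{i}$ and $M=\max_{i}c_{i}$, so $m<M$.
Since a face of a polytope is the convex hull of the polytope's vertices that lie
on it, and since $c_{j}-c_{i}$ attains its maximum $M-m$ exactly when $c_{j}=M$
and $c_{i}=m$, we get $F=\conv(I\times J)$ with $I=\{i:c_{i}=m\}$ and
$J=\{j:c_{j}=M\}$, which are nonempty and disjoint. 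Together with the previous
paragraph this gives the bijection between proper nonempty faces and pairs
$(I,J)$ of nonempty disjoint subsets of $\{1,\ldots,n+1\}$ (the excepted faces
being $\emptyset$ and $P_{n}$).

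It remains to compute dimensions. Fixing $i_{0}\in I$ and $j_{0}\in J$ and
translating $\conv(I\times J)$ by $-(e_{j_{0}}-e_{i_{0}})$, its linear span is
generated by $\{e_{i}-e_{i_{0}}:i\in I\setminus\{i_{0}\}\}\cup\{e_{j}-e_{j_{0}}:
j\in J\setminus\{j_{0}\}\}$; because $I$ and $J$ are disjoint these
$|I|+|J|-2$ vectors are linearly independent (looking at coefficients of the
basis vectors $e_{i}$, $i\in I\setminus\{i_{0}\}$, and $e_{j}$,
$j\in J\setminus\{j_{0}\}$), so $\dim\conv(I\times J)=|I|+|J|-2$. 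Moreover
$\dim P_{n}=n$, since $P_{n}$ is the section of the $(n+1)$-dimensional
cross-polytope by a hyperplane through its center. Hence $\conv(I\times J)$ is a
facet iff $|I|+|J|-2=n-1$, i.e.\ $|I|+|J|=n+1$; as $I$ and $J$ are disjoint
subsets of an $(n+1)$-element set, this holds iff $I\cup J=\{1,\ldots,n+1\}$.
The only steps needing any care are the converse direction — checking that this
normal-cone bookkeeping catches every face and no spurious ones — and the routine
linear-independence verification in the dimension count; the rest is direct
computation.
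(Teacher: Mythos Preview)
Your argument is correct. The paper, however, does not actually prove this lemma: it is stated as a fact, prefaced only by the one-line observation that a collection of vertices $(i,j)$ lies in a common face of $P_{n}$ precisely when no index occurs both as a first coordinate and as a second coordinate. Your supporting-functional argument is the standard way to flesh this out, and all the steps check: the explicit functional with values $0$, $1$, $1/2$ exhibits each $\conv(I\times J)$ as a face; the min/max analysis of an arbitrary functional shows every proper face arises this way; and the dimension count via the translated spanning set $\{e_{i}-e_{i_{0}}\}\cup\{e_{j}-e_{j_{0}}\}$ is clean. So there is nothing to compare against beyond noting that you have supplied a proof the paper omits.
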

\noindent
Especially, when the two sets $I$ and $J$
both have cardinality two, the associated 
face is geometrically a square.
Furthermore, the other two-dimensional faces
are equilateral triangles.

Affine independent subsets of vertices of faces of the Legendre
polytope are
easy to describe. A set $S=\{(i_1,j_1),(i_2,j_2),\ldots, (i_{k},j_{k})\}$ is a
$(k-1)$-dimensional simplex if and only if, 
disregarding the orientation of the directed
edges, the set $S$ contains no
cycle, that is, it is a forest~\cite[Lemma~2.4]{Hetyei-Legendre}. 

The Legendre polytope $P_{n}$ contains the polytope $P_{n}^{+}$, defined as the convex
hull of the origin and the set of points $e_{i}-e_{j}$, where $i<j$. The
polytope $P_{n}^{+}$ was first studied by Gelfand, Graev and
Postnikov~\cite{Gelfand-Graev-Postnikov} and later by
Postnikov~\cite{Postnikov-P}.
Some of the results on $P_{n}^{+}$ may be easily
generalized to~$P_{n}$.

\subsection{Pulling triangulations}

The notion of pulling triangulations is 
originally due to Hudson~\cite[Lemma~1.4]{Hudson}.
For more modern formulations, see~\cite[Lemma~1.1]{Stanley-Dec}
and~\cite[End of Section~2]{Athanasiadis}.
We refer to~\cite[Section~2.3]{Hetyei-Legendre} for the version
presented here.

For a polytopal complex ${\mathcal P}$ 
and a vertex $v$ of ${\mathcal P}$,
let ${\mathcal P}- v$ be the complex consisting
of all faces of~${\mathcal P}$ not
containing the vertex $v$.
Also for a facet $F$ let
${\mathcal P}(F)$ be the complex of all
faces of ${\mathcal P}$ contained in~$F$.

\begin{definition}[Hudson]
\label{definition_pull}
Let  ${\mathcal P}$  be a polytopal complex and let $<$ be a linear
order on the set $V$ of its vertices. The pulling triangulation
$\triangle({\mathcal P})$ with respect to $<$ is defined
recursively as follows. We set 
$\triangle({\mathcal P})={\mathcal P}$ if ${\mathcal P}$ consists
of a single vertex. Otherwise let $v$ be the least element of $V$ with
respect to $<$ and set 
$$
\triangle({\mathcal P})
=
\triangle({\mathcal P}- v)\cup
\bigcup_F \left\{ \conv(\{v\}\cup G)\::\: G\in
\triangle({\mathcal P}(F))\right\} ,
$$
where the union runs over the facets $F$ not containing $v$ of the
maximal faces of ${\mathcal P}$ which contain~$v$.
The triangulations
$\triangle({\mathcal P}- v)$ and $\triangle({\mathcal P}(F))$ are
with respect to the order~$<$
restricted to their respective vertex sets.
\end{definition}

\begin{theorem}[Hudson]
The pulling triangulation $\triangle({\mathcal P})$
is a triangulation of
the polytopal complex~${\mathcal P}$
without introducing any new vertices.
\label{theorem_pull}
\end{theorem}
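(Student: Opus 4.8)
The plan is to induct on the number of vertices of $\mathcal{P}$, the case of a single vertex being immediate from Definition~\ref{definition_pull}. To make the recursion self-contained I would prove, by the same induction, the following \emph{restriction property} alongside the main statement: for every face $R$ of $\mathcal{P}$, the cells of $\triangle(\mathcal{P})$ contained in $R$ are exactly the cells of $\triangle(\mathcal{P}(R))$ formed with respect to the restricted order. Writing $v$ for the $<$-least vertex, the recursion produces $\triangle(\mathcal{P})$ as the union of the triangulation $\triangle(\mathcal{P}-v)$ of the subcomplex $\mathcal{P}-v$ together with the cones $\conv(\{v\}\cup G)$, where $G$ ranges over $\triangle(\mathcal{P}(F))$ and $F$ over the facets avoiding $v$ of the maximal faces of $\mathcal{P}$ through $v$. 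Since $\mathcal{P}-v$ has one fewer vertex and each $\mathcal{P}(F)$ has fewer vertices than $\mathcal{P}$ (as $v\notin F$), the inductive hypothesis --- restriction property included --- applies to all the pieces being glued.

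It suffices to verify: (i) each produced cell is a geometric simplex whose vertices are vertices of $\mathcal{P}$; (ii) the family is closed under taking faces; (iii) the cells cover $|\mathcal{P}|$; (iv) each cell lies in a face of $\mathcal{P}$; and (v) the relative interiors of the cells are pairwise disjoint --- items (i), (ii), (iv) and (v) then combine with the standard fact that a convex union of faces of a simplex is a single face to force $\triangle(\mathcal{P})$ to be a triangulation of $\mathcal{P}$ without new vertices. For (i): if $F$ is a facet of a polytope $Q$ and $v$ a vertex of $Q$ with $v\notin F$, then the supporting hyperplane of $F$ in $\mathrm{aff}(Q)$ avoids $v$, so $v\notin\mathrm{aff}(F)$ and hence $v\notin\mathrm{aff}(G)$ for every simplex $G\subseteq F$ of $\triangle(\mathcal{P}(F))$; thus $\conv(\{v\}\cup G)$ is a simplex of dimension $\dim G+1$ with no new vertices. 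For (ii): a face of such a cone is either a face of $G$ --- which lies in $\triangle(\mathcal{P}(F))\subseteq\triangle(\mathcal{P}-v)\subseteq\triangle(\mathcal{P})$ by the restriction property for $\mathcal{P}-v$ and the recursion --- or a cone over one. Item (iv) is immediate: a cell of $\triangle(\mathcal{P}-v)$ lies in a face of $\mathcal{P}-v\subseteq\mathcal{P}$ by induction, and a cone $\conv(\{v\}\cup G)$ with $G\subseteq F\subseteq Q$ lies in $Q$ by convexity.

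The covering (iii) rests on a pyramid lemma: for a polytope $Q$ with vertex $v$ one has $Q=\bigcup_{F}\conv(\{v\}\cup F)$ over the facets $F$ of $Q$ with $v\notin F$. I would deduce this from the elementary observation that the ray from $v$ through any point $x\in Q\setminus\{v\}$ leaves $Q$ at a unique boundary point, which lies on some facet of $Q$ not containing $v$ (here one uses that the supporting hyperplane of such a facet strictly separates $v$ from the exterior of $Q$, so the ray meets it exactly once). Combined with $F=|\triangle(\mathcal{P}(F))|$ by induction and the fact that coning commutes with unions, this gives $|\mathcal{P}|=|\mathcal{P}-v|\cup\bigcup_{Q\ni v}Q\subseteq|\triangle(\mathcal{P})|$, and the reverse inclusion is (iv).

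The heart of the matter is (v), and it is where the restriction property earns its keep; I expect this to be the main obstacle. The ``unique exit point'' observation is again the engine: if $x\neq v$ lies in the relative interiors of two cones $\conv(\{v\}\cup G_{1})$ and $\conv(\{v\}\cup G_{2})$, then $x$ lies on segments $[v,p_{1}]$ and $[v,p_{2}]$ with $p_{i}\in\mathrm{relint}(G_{i})$, and because $p_{i}$ sits on a facet of the relevant polytope, $p_{i}$ is forced to be the unique exit point of the ray from $v$ through $x$, so $p_{1}=p_{2}$. Using that a nonnegative affine function vanishing at a relative-interior point of a simplex vanishes identically on it, one then shows $G_{1}$ and $G_{2}$ both lie in a common face $R_{0}$ of $\mathcal{P}$ omitting $v$ and having strictly fewer vertices; the restriction property identifies both $G_{1}$ and $G_{2}$ as cells of $\triangle(\mathcal{P}(R_{0}))$, and since the latter is a triangulation by induction, the meeting of their relative interiors forces $G_{1}=G_{2}$, hence the cones coincide. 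A cone $\conv(\{v\}\cup G)$ and a cell $\sigma\in\triangle(\mathcal{P}-v)$ cannot share a relative-interior point at all: such a point would lie in $Q\cap R$ for the relevant $Q\ni v$ and a face $R\not\ni v$, hence in a facet $F^{*}$ of $Q$ avoiding $v$, and a short computation gives $\conv(\{v\}\cup G)\cap F^{*}=G\cap F^{*}$, a proper face of the cone, contradicting membership in its relative interior. The real work, and the part most prone to slips, is exactly this bookkeeping --- always passing to the smallest face of $\mathcal{P}$ containing each simplex and invoking the restriction property so that simplices arriving from different sides of a shared face are seen inside one and the same sub-triangulation. Once (v) is in hand, the restriction property for $\mathcal{P}$ itself drops out of a short case split on whether $v\in R$, which closes the induction.
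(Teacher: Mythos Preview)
The paper does not prove this theorem at all: it is stated as a classical result attributed to Hudson and cited from his book on piecewise linear topology, so there is no ``paper's own proof'' to compare against.

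Your proposal is a reasonable self-contained argument and the overall architecture is correct: induct on the number of vertices, carry the restriction property along as an auxiliary statement, and verify that the cone construction yields simplices, covers, and does not overlap. The pyramid lemma and the ``unique exit point on a ray from $v$'' device are the right tools for (iii) and for the cone--cone part of (v). Your claim $\conv(\{v\}\cup G)\cap F^{*}=G\cap F^{*}$ is in fact correct (the supporting affine functional of $F^{*}$ is nonnegative on $Q$ and positive at $v$, so its zero set on the simplex $\conv(\{v\}\cup G)$ is a face not containing $v$, hence a face of $G$), though you might spell that out. The one place I would tighten is the passage ``one then shows $G_{1}$ and $G_{2}$ both lie in a common face $R_{0}$'': the clean way is to take $R_{0}$ to be the smallest face of $\mathcal{P}$ containing the common point $p$, observe $R_{0}\subseteq F_{i}$ since $p\in F_{i}$, and then use your vanishing-affine-functional observation with the functional cutting out $R_{0}$ in $F_{i}$ to force $G_{i}\subseteq R_{0}$; from there the restriction property for $\mathcal{P}(F_{i})$ (not for $\mathcal{P}-v$) places both $G_{i}$ in $\triangle(\mathcal{P}(R_{0}))$. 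The closing verification of the restriction property for $\mathcal{P}$ in the case $v\in R$ also deserves a line or two more, since one must match the recursive description of $\triangle(\mathcal{P}(R))$ against the cells of $\triangle(\mathcal{P})$ landing in $R$; this is routine but not a one-liner.
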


In particular, any pulling triangulation of the
boundary of $P_{n}$  is compressed as defined by
Stanley~\cite{Stanley-Dec}, and has the same face 
numbers~\cite[Corollary 4.11]{Hetyei-Legendre}. This important fact
and the analogous statement for $P_{n}^{+}$ is a direct consequence of the
following two fundamental results~\cite{Heller,Heller-Hoffman,Stanley-Dec}.
\begin{proposition}[Stanley]
\label{proposition_compress}
Suppose that one of the vertices of a polytope $P$ is the origin and that the
matrix whose rows are the coordinates of   
the vertices of $P$ is totally unimodular.
Let $<$ be any ordering on the vertex set of~$P$ such that the
origin is the least vertex with respect to $<$. Then the pulling order
$<$ is compressed, that is, all of the facets in the induced triangulation
have the same relative volume.
\end{proposition}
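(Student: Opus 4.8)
The plan is to reduce the statement to a single fact about lattices: every facet (maximal simplex) of the induced pulling triangulation $\triangle(P)$ is unimodular with respect to the lattice $\Lambda:=\operatorname{aff}(P)\cap\Zzz^{n}$, where $n$ denotes the ambient dimension. Once this is shown the Proposition follows immediately, since a unimodular $d$-simplex has relative volume equal to a fixed quantity depending only on $\Lambda$ (namely $1/d!$ times the covolume of $\Lambda$), and $d=\dim P$ is the common dimension of the facets of $\triangle(P)$; hence all these facets have the same relative volume.

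The first step is to pin down the shape of the facets of $\triangle(P)$. Since the origin $0$ is the least vertex, it is the vertex pulled at the top level of Definition~\ref{definition_pull}, so
$$
\triangle(P)=\triangle(P-0)\ \cup\ \bigcup_{F}\bigl\{\conv(\{0\}\cup G)\::\:G\in\triangle(P(F))\bigr\},
$$
the union being over the facets $F$ of $P$ not containing $0$. The subcomplex $\triangle(P-0)$ has dimension $\dim P-1$ and therefore contributes no facet of $\triangle(P)$; moreover, by Theorem~\ref{theorem_pull}, no new vertices are introduced. Consequently every facet of $\triangle(P)$ has the form $\sigma=\conv(\{0,v_{1},\ldots,v_{d}\})$, where $v_{1},\ldots,v_{d}$ are vertices of $P$ and, being affinely independent together with $0$, are linearly independent.

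The main step is the lattice computation for a fixed such $\sigma$. Because $0\in P$, the affine hull $W:=\operatorname{aff}(P)$ is the linear span of the vertices of $P$, a $d$-dimensional subspace of $\Rrr^{n}$, and $v_{1},\ldots,v_{d}$ form an $\Rrr$-basis of $W$. Let $A$ be the $d\times n$ integer matrix with rows $v_{1},\ldots,v_{d}$. It is a row submatrix of the vertex matrix of $P$, hence totally unimodular, so every $d\times d$ minor of $A$ lies in $\{0,\pm 1\}$, and at least one is nonzero since $\operatorname{rank}A=d$. The index $[\,\Lambda:\Zzz v_{1}+\cdots+\Zzz v_{d}\,]$ equals the greatest common divisor of the $d\times d$ minors of $A$: writing the Smith normal form $A=UDV$ with $U\in\mathrm{GL}_{d}(\Zzz)$ and $V\in\mathrm{GL}_{n}(\Zzz)$, the product of the invariant factors of $A$ is simultaneously this gcd and the order of the torsion subgroup of $\Zzz^{n}/(\Zzz v_{1}+\cdots+\Zzz v_{d})$, and that torsion subgroup is exactly $\Lambda/(\Zzz v_{1}+\cdots+\Zzz v_{d})$. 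Hence the gcd, and so the index, equals $1$, so $v_{1},\ldots,v_{d}$ is a $\Zzz$-basis of $\Lambda$ and $\sigma$ is unimodular. This conclusion does not depend on the chosen facet, which completes the plan.

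The step I expect to be the main obstacle is this last lattice computation, and the subtlety is that $P$ need not be full-dimensional in $\Rrr^{n}$ (indeed $P_{n}^{+}$ lives in the hyperplane $x_{1}+\cdots+x_{n+1}=0$). Relative volume must therefore be measured inside $\operatorname{aff}(P)$ against the induced lattice $\Lambda=\operatorname{aff}(P)\cap\Zzz^{n}$, and the real content is that total unimodularity promotes ``$v_{1},\ldots,v_{d}$ span $W$ over $\Rrr$'' to ``$v_{1},\ldots,v_{d}$ span $\Lambda$ over $\Zzz$''. In the full-dimensional case one could finish by quoting $\lvert\det(v_{1},\ldots,v_{d})\rvert=1$; here one must argue through the greatest common divisor of the maximal minors of $A$, taking care that $\Zzz^{n}/(\Zzz v_{1}+\cdots+\Zzz v_{d})$ has a free part of rank $n-d$ so that its torsion order is genuinely the index of $\Zzz v_{1}+\cdots+\Zzz v_{d}$ inside $\Lambda$.
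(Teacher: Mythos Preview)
Your argument is correct. The key observation that pulling the origin first forces every maximal simplex of $\triangle(P)$ to have the form $\conv(\{0,v_{1},\ldots,v_{d}\})$ with the $v_{i}$ among the original vertices is exactly right, and the Smith normal form computation showing that total unimodularity forces $v_{1},\ldots,v_{d}$ to be a $\Zzz$-basis of $\Lambda=\operatorname{aff}(P)\cap\Zzz^{n}$ is clean and handles the non-full-dimensional case properly.

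There is, however, nothing in the paper to compare against: Proposition~\ref{proposition_compress} is stated with attribution to Stanley and a citation to~\cite{Stanley-Dec}, but no proof is given here. It is quoted as background, together with Heller's Theorem~\ref{theorem_Heller}, solely to justify that every pulling triangulation of $\partial P_{n}$ has the same face numbers. Your write-up is essentially the standard unimodularity argument one would give for this fact, so it is entirely appropriate; just be aware that the paper itself treats the result as a black box.
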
 
\begin{theorem}[Heller]
\label{theorem_Heller}
The incidence matrix of a directed graph is totally unimodular.
\end{theorem}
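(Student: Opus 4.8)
The plan is to prove directly the defining property of total unimodularity: that every square submatrix $N$ of the incidence matrix $M$ of a directed graph satisfies $\det N \in \{-1,0,1\}$. I would argue by induction on the order $k$ of $N$. The key structural observation is that each column of $M$ has exactly one entry equal to $+1$ (in the row indexing the head of the corresponding directed edge), exactly one entry equal to $-1$ (in the row indexing its tail), and zeros elsewhere; hence each column of $N$, obtained by deleting some rows of such a column, contains at most one $+1$ and at most one $-1$.

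The base case $k = 1$ is immediate, since every entry of $M$ lies in $\{-1,0,1\}$. For the inductive step with $k \ge 2$, I would split into three cases according to the columns of $N$. First, if some column of $N$ is identically zero, then $\det N = 0$. Second, if some column of $N$ has a unique nonzero entry, which must be $\pm 1$, I expand the determinant along that column to obtain $\det N = \pm \det N'$ for a $(k-1) \times (k-1)$ submatrix $N'$ of $M$; the inductive hypothesis gives $\det N' \in \{-1,0,1\}$, hence $\det N \in \{-1,0,1\}$. Third, in the remaining case every column of $N$ has exactly two nonzero entries, one $+1$ and one $-1$, so the sum of all rows of $N$ is the zero row vector; the rows of $N$ are therefore linearly dependent and $\det N = 0$. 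These cases are exhaustive, and the induction is complete.

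I do not expect a serious obstacle here. The one point to handle with care is that passing to a submatrix may delete one or both of the $\pm 1$ entries of a column, which is precisely what makes the first two cases occur and what drives the recursion. It is also worth remarking at the outset that the statement does not depend on which consistent sign convention one adopts for heads versus tails, since reversing the convention replaces $M$ by $-M$ and changes the determinant of a square submatrix only by a predictable sign.
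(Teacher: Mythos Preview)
Your argument is correct and is in fact the standard proof of this classical result. Note, however, that the paper does not give its own proof of Theorem~\ref{theorem_Heller}: the statement is merely quoted as a known result, with a citation to Heller's original papers, and is used as a black box together with Proposition~\ref{proposition_compress} to conclude that all pulling triangulations of the Legendre polytope are compressed. So there is no proof in the paper to compare against; your inductive case analysis on the columns of a square submatrix is exactly the textbook argument and would serve perfectly well if one wished to make the paper self-contained at this point.
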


\subsection{Face vectors of pulling triangulations of
the Legendre polytope}

Among all triangulations of the boundary of the Legendre polytope $P_{n}$
obtained by pulling
the vertices, counting faces is most easily performed for the
{\em lexicographic triangulation}
in which we pull $(i,j)$ before $(i',j')$ exactly when
$i<i'$ or when $i=i'$ and $j<j'$. Counting faces in this
triangulation amounts to counting lattice paths;
see~\cite[Lemma~5.1]{Hetyei-Legendre}
and~\cite[Proposition~17]{Ardila}. From this we
obtain the following expression for the face 
numbers~\cite[Theorem~5.2]{Hetyei-Legendre}. 
\begin{lemma}[Hetyei]
For any pulling triangulation of the boundary of $P_{n}$,
the number $f_{j-1}$ of $(j-1)$-dimensional faces is 
\begin{equation}
\label{equation_f}
f_{j-1}=\binom{n+j}{j}\binom{n}{j} \quad \mbox{for $0\leq j\leq n$}.
\end{equation}    
\label{lemma_any_pull}  
\end{lemma}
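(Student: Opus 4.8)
The plan is to reduce the general statement to the case of the lexicographic triangulation, using the fact—already recorded in the excerpt via Proposition~\ref{proposition_compress} and Theorem~\ref{theorem_Heller}—that every pulling triangulation of the boundary of $P_{n}$ is compressed. First I would note that the vertex set of $P_{n}$ together with the origin is, after a harmless translation making the origin a vertex of the relevant polytopal complex, the incidence matrix of the complete directed graph on $\{1,\dots,n+1\}$ with loops removed; by Theorem~\ref{theorem_Heller} this matrix is totally unimodular, so Proposition~\ref{proposition_compress} applies and all facets in any such pulling triangulation have the same relative volume. Two compressed triangulations of the same polytope have the same number of top-dimensional simplices (the normalized volume divided by the common simplex volume), and in fact compressed triangulations of a polytope all have the same $f$-vector; I would cite \cite[Corollary~4.11]{Hetyei-Legendre}, which the excerpt explicitly invokes for exactly this purpose. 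Hence it suffices to count faces in one convenient pulling triangulation.

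Next I would pick the lexicographic triangulation, in which $(i,j)$ is pulled before $(i',j')$ when $i<i'$, or $i=i'$ and $j<j'$. Recalling from the discussion before Lemma~\ref{lemma_any_pull} that a set of directed edges $S=\{(i_1,j_1),\dots,(i_k,j_k)\}$ spans a $(k-1)$-simplex of $P_{n}$ precisely when $S$, viewed as an undirected edge set, is a forest, the faces of the triangulation are certain distinguished forests on $\{1,\dots,n+1\}$. The combinatorial content is that the lexicographic pulling rule selects, among all forests, exactly those that satisfy a local "staircase" or non-crossing condition on the pairs $(i,j)$, and these are in bijection with lattice paths from $(0,0)$ to $(2n,0)$ with $j$ up steps, $j$ down steps, and $n-j$ level steps for a $(j-1)$-dimensional face; this is the content of \cite[Lemma~5.1]{Hetyei-Legendre} and \cite[Proposition~17]{Ardila}, which I would invoke directly rather than reprove.

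Finally, I would carry out the enumeration of these balanced Delannoy-type paths. A path from $(0,0)$ to $(2n,0)$ using $j$ up steps $(1,1)$, $j$ down steps $(1,-1)$, and $n-j$ level steps $(2,0)$ has a total of $2j+(n-j)=n+j$ steps; choosing the positions of the $n-j$ level steps among all $n+j$ steps in $\binom{n+j}{n-j}=\binom{n+j}{2j}$ ways and then, among the remaining $2j$ steps, choosing which $j$ are up steps in $\binom{2j}{j}$ ways, with no further constraint since the excerpt's paths need not stay weakly above the axis, gives
\begin{equation*}
f_{j-1}=\binom{n+j}{2j}\binom{2j}{j}.
\end{equation*}
A routine manipulation of factorials rewrites this as $\binom{n+j}{2j}\binom{2j}{j}=\dfrac{(n+j)!}{(n-j)!\,j!\,j!}=\binom{n+j}{j}\binom{n}{j}$, which is exactly \eqref{equation_f}. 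The main obstacle is not the computation but making sure the reduction to a single pulling triangulation is airtight: one must be careful that "compressed" genuinely forces equality of the entire $f$-vector (not merely the number of facets), and that the origin can legitimately be adjoined so that Proposition~\ref{proposition_compress} applies to triangulations of the \emph{boundary} of $P_{n}$ rather than of $P_{n}$ itself. Both points are handled in \cite{Hetyei-Legendre}, so with those citations in hand the argument is short; the remaining work is the bijective identification of the lexicographic faces with the stated lattice paths, which I would quote from the cited sources.
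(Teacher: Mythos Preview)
Your proposal is correct and follows essentially the same approach as the paper: the paper does not give a self-contained proof of this lemma but simply cites \cite[Theorem~5.2]{Hetyei-Legendre}, after outlining in the surrounding text exactly the two-step argument you describe---reduce to a single pulling triangulation via compressedness (Proposition~\ref{proposition_compress}, Theorem~\ref{theorem_Heller}, and \cite[Corollary~4.11]{Hetyei-Legendre}), then count faces in the lexicographic triangulation via lattice paths (\cite[Lemma~5.1]{Hetyei-Legendre}, \cite[Proposition~17]{Ardila}). Your explicit multinomial computation $\binom{n+j}{2j}\binom{2j}{j}=\binom{n+j}{j}\binom{n}{j}$ is a welcome addition that the paper leaves implicit.
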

It was first noted in~\cite{Cori-Hetyei} that
these face numbers are the
same as that of Simion's type $B$ associahedron~$\Gamma_{n}^{B}$.
Theorem~\ref{theorem_r_pull} will explicitly explain this fact 
by showing that
$\Gamma_{n}^{B}$ can be realized
as a pulling triangulation of the Legendre polytope.

\section{The flag property}
\label{section_flag}

Recall that a simplicial complex is a {\em flag complex} if every
minimal nonface has two elements.
The main result of this section is the following.
\begin{theorem}
\label{theorem_pull_flag}  
Every pulling triangulation of the boundary of the Legendre polytope
$P_{n}$ is a flag simplicial complex.
\end{theorem}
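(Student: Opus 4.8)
The plan is to fix an arbitrary pulling triangulation $\triangle$ of the boundary of $P_{n}$ and to establish the equivalent statement that if $S$ is a set of vertices of $P_{n}$ \emph{all of whose two-element subsets are faces (edges) of $\triangle$}, then $S$ is itself a face of $\triangle$. This suffices: a minimal non-face $M$ is not a single vertex, and if $|M|\geq 3$ then every two-element subset of $M$ lies in a proper subface $M\setminus\{c\}$ and hence is a face, so the displayed implication would force $M$ to be a face, a contradiction; thus every minimal non-face has two elements.

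First I would record the local structure. If every pair in $S$ is an edge of $\triangle$ then, by the description of the faces of $P_{n}$ given just before Lemma~\ref{lemma_facets}, no $k\in\{1,\dots,n+1\}$ is simultaneously the head of an edge of $S$ and the tail of another (such a pair lies in no face of $P_{n}$, hence is not a face of $\triangle$). Therefore $S$ is contained in the face $F_{S}:=\conv(\Tail(S)\times\Head(S))$, which by Lemma~\ref{lemma_facets} is the minimal face of $P_{n}$ containing $S$ and has dimension at most $n-1$, so it is a \emph{proper} face. I would then use the standard locality property of pulling triangulations, immediate by induction from Definition~\ref{definition_pull}: the faces of $\triangle$ contained in a face $F$ of $P_{n}$ are precisely the faces of the pulling triangulation $\triangle(F)$ of $F$ with respect to the induced order. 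In particular it is enough to prove that $S$ is a face of $\triangle(F_{S})$.

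The core is the following claim, proved by induction on dimension: \emph{for every face $F=\conv(I\times J)$ of $P_{n}$, every set $S$ of vertices of $F$ whose pairs are all edges of $\triangle(F)$ is a face of $\triangle(F)$.} If $|I|=1$ or $|J|=1$ then $F$ is a simplex and there is nothing to prove; otherwise, replacing $F$ by the minimal face of $P_{n}$ containing $S$ and applying the inductive hypothesis together with the locality property when this replacement is a proper subface, I may assume $\Tail(S)=I$ and $\Head(S)=J$. Let $v=(i_{0},j_{0})$ be the smallest vertex of $F$ in the pulling order. The two local observations I would use are: (i) for $i\neq i'$ in $I$ and $j\neq j'$ in $J$, the set $\conv(\{i,i'\}\times\{j,j'\})$ is a square $2$-face of $F$ with diagonals $\{(i,j),(i',j')\}$ and $\{(i,j'),(i',j)\}$, and exactly the diagonal containing the smallest of its four vertices is an edge of $\triangle(F)$; and (ii) the facets of $F$ not containing $v$ are exactly $G_{1}=\conv((I\setminus i_{0})\times J)$ and $G_{2}=\conv(I\times(J\setminus j_{0}))$. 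From (i): if $v\notin S$, choose $(i_{0},j_{1})\in S$ and $(i_{1},j_{0})\in S$ (they exist because $S$ uses every element of $I$ and of $J$, and neither equals $v$); these two vertices form the diagonal of $\conv(\{i_{0},i_{1}\}\times\{j_{0},j_{1}\})$ not containing its smallest vertex $v$, hence are not joined by an edge of $\triangle(F)$, contradicting the hypothesis. So $v\in S$; write $S=\{v\}\cup S'$. Applying the same trick to a hypothetical pair consisting of an edge of $S'$ with tail $i_{0}$ and an edge of $S'$ with head $j_{0}$ shows that no such pair exists, whence $S'\subseteq G_{1}$ or $S'\subseteq G_{2}$; say $S'\subseteq G_{1}$. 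Then all pairs of $S'$ are edges of $\triangle(F)$ lying in $G_{1}$, hence edges of $\triangle(G_{1})$, and $\dim G_{1}=\dim F-1$, so by induction $S'$ is a face of $\triangle(G_{1})$. Unwinding the step of Definition~\ref{definition_pull} that pulls $v$ out of $F$, where $G_{1}$ is one of the facets not containing $v$, then yields that $\conv(\{v\}\cup S')=S$ is a face of $\triangle(F)$, completing the induction; taking $F=F_{S}$ proves the theorem.

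I expect the step requiring the most care to be observation~(i): checking from Definition~\ref{definition_pull} that a pulling triangulation of a quadrilateral uses exactly the diagonal through its first-pulled vertex, and then transporting this correctly through the two restrictions, first from the boundary of $P_{n}$ to $F$ and then from $F$ to the square. The only places where the full strength of the hypothesis --- all \emph{pairs} of $S$, not merely $S$ itself, being faces --- is genuinely needed are in forcing $v\in S$ and in the dichotomy $S'\subseteq G_{1}$ or $S'\subseteq G_{2}$; the remainder is a routine unwinding of Definition~\ref{definition_pull} and Lemma~\ref{lemma_facets}.
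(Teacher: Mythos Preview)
Your proposal is correct and follows essentially the same route as the paper's own proof: both arguments hinge on the square-face observation (your observation~(i), the paper's Lemma~\ref{lemma_square}) to show first that the least vertex $v$ of the minimal face $F$ containing $S$ must lie in $S$, and then that $S\setminus\{v\}$ lies in one of the two facets of $F$ avoiding $v$, after which the pulling recursion finishes the job. The only cosmetic differences are that the paper inducts on $|S|$ rather than on $\dim F$, and phrases the first step contrapositively (the least element of $S$ is shown to be the least element of $I\times J$); your treatment of the locality of pulling triangulations and the final coning step is in fact somewhat more explicit than the paper's.
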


To prove this theorem and Theorem~\ref{theorem_r_pull}, we need
the following observation. 
\begin{lemma}
Let $\{x_{1},x_{2},y_{1},y_{2}\}$ be a four element subset of the set
$\{1,2, \ldots, n+1\}$. Then the set
$\{x_{1},x_{2}\} \times \{y_{1},y_{2}\} = 
\{(x_{1},y_{1}),(x_{1},y_{2}),(x_{2},y_{2}),(x_{2},y_{1})\}$
is the vertex set of a square face of the Legendre polytope $P_{n}$, and the sets
$\{(x_{1},y_{1}), (x_{2},y_{2})\}$ and $\{(x_{1},y_{2}), (x_{2},y_{1})\}$
are the diagonals of this square. For any pulling triangulation the
diagonal containing the vertex that was pulled first is an edge of the
triangulation and the other diagonal is not an edge.
\label{lemma_square}
\end{lemma}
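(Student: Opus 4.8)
The plan is to verify the geometric claims first and then analyze the pulling triangulation combinatorially. For the first part, observe that since $\{x_1,x_2\}$ and $\{y_1,y_2\}$ are disjoint non-empty subsets of $\{1,\dots,n+1\}$, Lemma~\ref{lemma_facets} immediately tells us that $\conv(\{x_1,x_2\}\times\{y_1,y_2\})$ is a face of $P_n$ of dimension $|\{x_1,x_2\}|+|\{y_1,y_2\}|-2=2$, and the remark following that lemma identifies it as a square. To see which pairs are the diagonals, use the edge-forest criterion from~\cite[Lemma~2.4]{Hetyei-Legendre}: each of the four pairs of the form $\{(x_a,y_b),(x_a,y_{b'})\}$ or $\{(x_a,y_b),(x_{a'},y_b)\}$ shares a vertex (as an undirected edge on $\{1,\dots,n+1\}$) and so spans an edge of the square, while the two pairs $\{(x_1,y_1),(x_2,y_2)\}$ and $\{(x_1,y_2),(x_2,y_1)\}$ are forests (two disjoint edges) that are not among the four sides, hence must be the two diagonals. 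Equivalently, averaging coordinates shows both diagonals pass through the barycenter of the square.

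For the second part, I would argue by induction on the number of vertices of the complex, following the recursion in Definition~\ref{definition_pull}. Let $v$ be the first pulled vertex overall and let $w$ be the first pulled vertex among the four vertices of the square $Q=\conv(\{x_1,x_2\}\times\{y_1,y_2\})$; say $w=(x_1,y_1)$ after relabeling, so the claim is that $\{(x_1,y_1),(x_2,y_2)\}$ is an edge of $\triangle(\partial P_n)$ and $\{(x_1,y_2),(x_2,y_1)\}$ is not. First handle the case $v\notin Q$: then $Q$ is still a face of both $\mathcal P-v$ and of $\mathcal P(F)$ for every maximal face $F$ of $P_n$ containing it, the vertex $w$ remains the first-pulled vertex of $Q$ in each restricted order, and no simplex of $\triangle(\partial P_n)$ using a vertex of $Q$ as an apex cone point can create the ``wrong'' diagonal because coning with $v$ adds $v$, not a vertex of $Q$; so the statement for $P_n$ reduces to the same statement for the smaller complexes $\mathcal P-v$ and $\mathcal P(F)$, to which induction applies. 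The case $v\in Q$ forces $v=w=(x_1,y_1)$, and then by the recursion every maximal face $F$ of $P_n$ containing $Q$ contributes $\conv(\{v\}\cup G)$ for $G\in\triangle(\mathcal P(F'))$ over facets $F'$ of $F$ not containing $v$; since the opposite vertex $(x_2,y_2)$ of $Q$ lies in such an $F'$ (any facet of $P_n$ whose defining sets $I\supseteq\{x_1,x_2\}$, $J\supseteq\{y_1,y_2\}$ works, and the facet $F'$ of $F$ obtained by deleting $x_1$ from $I$, say, still contains $(x_2,y_2)$ and not $v$), the edge $\{v,(x_2,y_2)\}=\{(x_1,y_1),(x_2,y_2)\}$ appears; meanwhile $\{(x_1,y_2),(x_2,y_1)\}$ cannot appear since the only simplices of $\triangle(\partial P_n)$ containing $v$ that are produced here have $v$ as a cone point over a simplex in $\partial F$ not meeting $v$, and within that boundary the edge $\{(x_1,y_2),(x_2,y_1)\}$ would have to already be a triangulation edge of a face of $F$ containing both $(x_1,y_2)$ and $(x_2,y_1)$ but not $(x_1,y_1)$ — the only such face containing both is a square or larger face having $Q$ among its faces, and there $w=v=(x_1,y_1)$ is pulled first by hypothesis, so by induction that diagonal is excluded.

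I expect the main obstacle to be the bookkeeping in the case $v\in Q$: one must check carefully that at least one maximal face $F$ of $P_n$ containing $Q$ has a facet $F'$ not containing $v$ but still containing the antipodal vertex $(x_2,y_2)$ (so the desired diagonal is genuinely produced), and simultaneously that no facet $F'$ of any such $F$ avoiding $v$ can contain the edge $\{(x_1,y_2),(x_2,y_1)\}$ as a triangulation edge — the latter requiring the inductive hypothesis applied to the face $F'$ (or a face of it) which contains that pair of vertices together with $(x_1,y_1)$ only if the whole square $Q$ is in $F'$, a contradiction since $v=(x_1,y_1)\notin F'$. Isolating the right faces $F$, $F'$ via Lemma~\ref{lemma_facets} and the forest criterion, and making the two halves of the argument line up, is the delicate part; the rest is a routine unwinding of the pulling recursion.
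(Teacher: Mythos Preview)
The paper gives no proof of this lemma; it is introduced as an ``observation.'' The argument the authors presumably have in mind is the standard fact that a pulling triangulation of a polytopal complex restricts, on every face $F$, to the pulling triangulation of $F$ with the induced order. Granting this, one only has to triangulate the square $Q$ itself: pulling its first vertex $w$ cones $w$ over the two opposite sides, producing precisely the two triangles sharing the diagonal through $w$ and omitting the other diagonal.

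Your direct induction through Definition~\ref{definition_pull} can be made to work, but the case $v=w=(x_1,y_1)$ is not correct as written. The pair $\{(x_1,y_2),(x_2,y_1)\}$ does not contain $v$, so the place to look for it in the recursion is $\triangle(\mathcal P - v)$, which you never treat; you only discuss the complexes $\mathcal P(F')$ arising from the cones over $v$. More substantively, you rightly note that any face of $P_n$ containing both $(x_1,y_2)$ and $(x_2,y_1)$ must contain all of $Q$ (such a face has the form $\conv(I\times J)$ with $\{x_1,x_2\}\subseteq I$ and $\{y_1,y_2\}\subseteq J$), but then you try to invoke the induction hypothesis on that face. That cannot work: the face in question contains $v$, so it does not lie in $\mathcal P - v$ or in any $\mathcal P(F')$ with $v\notin F'$, and there is no smaller complex in the recursion in which $Q$ is still a face. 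The correct conclusion is immediate and needs no induction at all: since every face of $P_n$ containing both endpoints of the wrong diagonal also contains $v$, the complex $\mathcal P - v$ has no face containing both endpoints, hence $\{(x_1,y_2),(x_2,y_1)\}$ is simply not a simplex of $\triangle(\mathcal P - v)$, and therefore not of $\triangle(\partial P_n)$. Your final paragraph brushes against this (you reach ``a contradiction since $v=(x_1,y_1)\notin F'$''), but the surrounding language still frames it as setting up an inductive step rather than as the end of the argument.
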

Theorem~\ref{theorem_pull_flag} may be
rephrased as follows.
\begin{theorem}
Let $<$ be any order on the vertices of the Legendre polytope $P_{n}$
and consider
the pulling triangulation of the boundary of $P_{n}$
induced by this order.
Suppose we are given a set of vertices
$\{(u_{1},v_{1}),(u_{2},v_{2}),\ldots,(u_{k},v_{k})\}$
such that any pair
$\{(u_{i},v_{i}),(u_{j},v_{j})\}$ is an edge in the pulling triangulation.
Then this set is a face of the pulling triangulation.
\end{theorem}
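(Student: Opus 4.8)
The plan is to prove this by induction on $k$, the number of vertices in the set, with the base cases $k \le 2$ being trivial (a single vertex is always a face, and for $k=2$ the hypothesis directly asserts the pair is an edge). So suppose $k \ge 3$ and that the claim holds for all smaller sets. Let $S = \{(u_1,v_1),\ldots,(u_k,v_k)\}$ be a pairwise-edge set; I want to show $S$ is a face of the triangulation. The first reduction is to observe that $S$ must at least lie in a single face of the Legendre polytope itself: since every pair lies in a common face, no index of $\{1,\ldots,n+1\}$ is both a head and a tail among the edges of any pair, and I would need to check this ``local'' condition globalizes — that is, if $I = \{u_1,\ldots,u_k\}$ (tails) and $J = \{v_1,\ldots,v_k\}$ (heads) then $I \cap J = \emptyset$, so that $S \subseteq I \times J$ and hence $S$ lies in the face $\conv(I\times J)$ by Lemma~\ref{lemma_facets}. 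Indeed, if some index $w$ were both a tail $(w, v_i)$ and a head $(u_j, w)$, then the pair $\{(w,v_i),(u_j,w)\}$ would fail the head/tail condition, contradicting that it is an edge; here one must also handle the degenerate possibility that these are the same vertex, i.e.\ $(w,v_i)=(u_j,w)$, which would force $w = u_j$ and $w = v_i$, impossible since $u_j \ne v_j = $ well, one checks the vertex $(w,w)$ does not exist.

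Having placed $S$ inside a genuine face $F_0 = \conv(I \times J)$ of $P_n$, I would next exploit the recursive structure of pulling. Let $v$ be the $<$-least vertex among \emph{all} vertices appearing in faces of $P_n$ that are relevant — more precisely, restrict attention to the pulling triangulation of $\partial P_n$ and trace where $S$ ends up in the recursion of Definition~\ref{definition_pull}. The key dichotomy: either the globally least vertex $v_{\min}$ of $P_n$ lies in $S$ or it does not. If $v_{\min} \notin S$, then $S$ survives into $\triangle(\mathcal{P} - v_{\min})$, every pair of $S$ is still an edge there, and the restricted order has a smaller vertex set, so I would like to induct — but the induction here is on the number of vertices of the polytope, not on $k$, so I should set the induction up on $|V|$ (or on $|V| + k$ lexicographically) from the start. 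If $v_{\min} \in S$, say $v_{\min} = (u_1,v_1)$, then $S$ can only be a face of the form $\conv(\{v_{\min}\} \cup G)$ where $G \in \triangle(\mathcal{P}(F))$ for some facet $F$ of a maximal face containing $v_{\min}$; I would show $S \setminus \{v_{\min}\}$ is pairwise-edge, lies in some such $\mathcal{P}(F)$, and apply induction to conclude $S \setminus \{v_{\min}\}$ is a face there, hence $S$ is a face.

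The main obstacle is the case $v_{\min} \in S$: I must verify that the remaining $k-1$ vertices actually land inside one of the facets $F$ over which the union in Definition~\ref{definition_pull} is taken, rather than merely inside the face $F_0$ found above. This is where Lemma~\ref{lemma_square} does the real work: it is precisely the statement that forbidden ``square diagonals'' are not edges. Concretely, if $(u_1,v_1) = v_{\min}$, then for the set $S$ to be extendible I need that no two other vertices $(u_i,v_i), (u_j,v_j)$ together with $v_{\min}$ form a configuration that is blocked; but any such blocking would come from a square $\{u_1,u_i\}\times\{v_1,v_i\}$ (or similar) in which $v_{\min}$ is on the \emph{wrong} diagonal, and since $v_{\min}$ was pulled first, Lemma~\ref{lemma_square} says $v_{\min}$ is on the edge-diagonal, so no pair involving $v_{\min}$ can be a non-edge — consistent with the hypothesis. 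The delicate point is translating this into the statement that $S \setminus \{v_{\min}\}$ lies in a single facet $F$ of a maximal face through $v_{\min}$: I expect this to reduce to showing that within the face $F_0 = \conv(I\times J)$, removing $v_{\min} = (u_1,v_1)$ and the recursive pulling step amounts to working inside $\conv((I\setminus\{u_1\})\times J) \cup \conv(I \times (J\setminus\{v_1\}))$, and that $S\setminus\{v_{\min}\}$, being pairwise-edge, avoids crossing the one forbidden diagonal and so sits in one of these two subfaces — an observation I would phrase and verify using the forest characterization of simplices combined with Lemma~\ref{lemma_square} applied to each square determined by $v_{\min}$ and a pair from $S$.
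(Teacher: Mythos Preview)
Your strategy of tracing the global pulling recursion (splitting on whether the globally least vertex $v_{\min}$ lies in $S$) can in principle be made to work, but it has two gaps. First, inducting on $|V|$ forces you to prove the flag property for every polytopal complex that arises in the recursion of Definition~\ref{definition_pull}, not just for $\partial P_n$; you acknowledge this but never formulate the strengthened statement you would actually be inducting on. Second, the ``delicate point'' you flag is the heart of the proof, and you have not given the argument. Concretely, you need that if $v_{\min}=(u_1,v_1)\in S$ then either $u_1\notin\{u_2,\ldots,u_k\}$ or $v_1\notin\{v_2,\ldots,v_k\}$, so that $S\setminus\{(u_1,v_1)\}$ sits inside a single facet of $\conv(I\times J)$ avoiding $(u_1,v_1)$. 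The correct use of Lemma~\ref{lemma_square} is this: if $u_1=u_i$ and $v_1=v_j$ for some $i,j>1$ (necessarily $v_i\ne v_1$ and $u_j\ne u_1$), then in the square $\{u_1,u_j\}\times\{v_1,v_i\}$ the vertex $(u_1,v_1)$ is pulled first, so the \emph{other} diagonal $\{(u_1,v_i),(u_j,v_1)\}=\{(u_i,v_i),(u_j,v_j)\}$ is not an edge---contradicting the pairwise-edge hypothesis on $S$. The forest characterization plays no role here.

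The paper avoids the first issue altogether by inducting purely on $k$, using an observation you are missing: the least vertex $(u_1,v_1)$ of $S$ is in fact the least vertex of the \emph{entire} set $I\times J$, not merely of $S$. This is again a square-lemma argument: if some $(u_i,v_j)$ with $u_i\ne u_j$ and $v_i\ne v_j$ were smaller, then in the square $\{u_i,u_j\}\times\{v_i,v_j\}$ the diagonal $\{(u_i,v_i),(u_j,v_j)\}$ would fail to be an edge. With this in hand one can work locally inside $\conv(I\times J)$: the pulling triangulation restricted there pulls $(u_1,v_1)$ first, $S\setminus\{(u_1,v_1)\}$ lies in a proper facet of $\conv(I\times J)$ by the argument of the previous paragraph, by induction on $k$ it is a face of the triangulation, and coning back over $(u_1,v_1)$ finishes. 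No recursion over the ambient vertex set is needed.
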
  
\begin{proof}
Assume that the pulling order on the set of vertices is given by
$(u_{1},v_{1}) < (u_{2},v_{2}) < \cdots < (u_{k},v_{k})$.
We prove the statement by induction on $k$. The statement is
directly
true for $k\leq 2$. Assume from now on $k\geq 3$.
Observe first that
the sets $\{u_{1},\ldots,u_{k}\}$ and $\{v_{1},\ldots,v_{k}\}$ must be
disjoint. Indeed, if for any $i \neq j$ we have $u_{i}=v_{j}$ then the pair of
vertices $\{(u_{i},v_{i}), (u_{j},v_{j})\}$ is not an edge, contradicting
our assumption. Thus our set of vertices is contained in the face
$\conv(\{u_{1},\ldots,u_{k}\}\times \{v_{1},\ldots,v_{k}\})$
of the polytope~$P_{n}$.
Note that the lists $u_{1},\ldots,u_{k}$ and $v_{1},\ldots,v_{k}$ may
contain repeated elements.

We next show that $(u_{1},v_{1})$ is the least element with respect of the
order $<$ in the set $\{u_{1},\ldots,u_{k}\}\times \{v_{1},\ldots,v_{k}\}$. To
prove this, suppose $(u_{i},v_{j})$ is the least element. If $i=j$ then
either $i=1$ or we may use that we were given $(u_{1},v_{1})<(u_{i},v_{i})$ for
all $i>1$. If $i\neq j$ but $u_{i}=u_{j}$ then we have $(u_{i}, v_{j})=(u_{j},
v_{j})$ and again we are done since $(u_{j},v_{j})>(u_{1},v_{1})$ if $j>1$.
Similarly, if $i\neq j$ but $v_{i}=v_{j}$ then we may use $(u_{i},
v_{j})=(u_{i},v_{i})$. Finally if $i\neq j$, $u_{i}\neq u_{j}$ and $v_{i}\neq v_{j}$
hold, then 
apply Lemma~\ref{lemma_square} to
the square with vertex set
$(u_{i},v_{i})$, $(u_{i},v_{j})$, $(u_{j},v_{j})$ and $(u_{j},v_{i})$. 
Since $(u_{i},v_{j})$ is the least 
with respect to the pulling order,
the diagonal $\{(u_{i},v_{i}), (u_{j},v_{j})\}$ is not an edge of the
triangulation, contradicting our assumption.
Hence we conclude $(u_{1},v_{1})$ is the least vertex in the set
$\{u_{1},\ldots,u_{k}\} \times \{v_{1},\ldots,v_{k}\}$.

We claim the smallest face of the
boundary of $P_{n}$ containing
the vertex set $\{(u_{2},v_{2}), \ldots, (u_{k}, v_{k})\}$
does not contain the vertex $(u_{1},v_{1})$.
Assume, by way of contradiction, that
$\{u_{1}, u_{2}, \ldots,u_{k}\}=\{u_{2},\ldots,u_{k}\}$ and
$\{v_{1}, v_{2}, \ldots,v_{k}\}=\{v_{2},\ldots,v_{k}\}$ hold.
Then there is an index $i>1$ and
an index $j>1$ such that $u_{1}=u_{i}$ and $v_{1}=v_{j}$ hold.
Note that we must
also have $v_{1} \neq v_{i}$ and $u_{1}\neq u_{j}$.  
Consider the square face with the following four
vertices: $(u_{1},v_{1})$, $(u_{1},v_{i})=(u_{i},v_{i})$,  $(u_{j},v_{i})$ and
$(u_{j},v_{j})=(u_{j},v_{1})$. The first vertex that was pulled is
$(u_{1},v_{1})$. By Lemma~\ref{lemma_square}
the edge $\{(u_{i},v_{i}), (u_{j},v_{j})\}$
is not an edge in the pulling triangulation,
contradicting our assumptions.  

By the induction hypothesis
the set $\{(u_{2},v_{2}), \ldots, (u_{k},v_{k})\}$
is a face in the polytopal complex
$\partial P_{n} - \{w \: : \: w \leq (u_{1},v_{1})\}$.
By definition, we obtain that
$\{(u_{1},v_{1})\} \cup \{(u_{2},v_{2}), \ldots, (u_{k},v_{k})\}$
is a face of the pulling triangulation.
\end{proof}  

Since the Cartesian product of an $m$-dimensional simplex
and an $n$-dimensional simplex is a face of
the Legendre polytope of dimension $m+n+1$, we obtain the following
corollary. 
\begin{corollary}
Every pulling triangulation of the Cartesian product of
two simplices is a flag complex.
\end{corollary}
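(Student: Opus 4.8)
The plan is to realize $\Delta_m\times\Delta_n$ as a facet of a Legendre polytope and to inherit the flag property from Theorem~\ref{theorem_pull_flag}. Put $N=m+n+1$ and fix a partition of $\{1,2,\ldots,N+1\}$ into disjoint nonempty sets $I$ and $J$ with $|I|=m+1$ and $|J|=n+1$. By Lemma~\ref{lemma_facets} the face $F:=\conv(I\times J)$ is a facet of $P_{N}$, of dimension $|I|+|J|-2=m+n$. The linear map sending a point $(x,y)$ of $\conv\{e_{i}:i\in I\}\times\conv\{e_{j}:j\in J\}$ to $y-x$ is injective because $I$ and $J$ are disjoint, and it identifies $\Delta_m\times\Delta_n$ affinely with $F$, carrying the vertex $(e_{i},e_{j})$ to the vertex $(i,j)$ of $P_{N}$.

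Under this identification a given pulling triangulation of $\Delta_m\times\Delta_n$, induced by some linear order on its vertices, becomes the pulling triangulation $\triangle(F)$ with respect to a linear order $<_{0}$ on the vertex set $I\times J$ of $F$. Extend $<_{0}$ to a linear order $<$ on all vertices of $P_{N}$ --- for instance by declaring every vertex of $P_{N}$ not in $F$ to be larger than every vertex of $F$ --- and form the pulling triangulation $\triangle(\partial P_{N})$ with respect to $<$. I would then use the fact that a pulling triangulation restricts on each face of the underlying complex to the pulling triangulation of that face with respect to the restricted order: unwinding Definition~\ref{definition_pull} by induction on the number of vertices shows that the faces of $\triangle(\partial P_{N})$ contained in $F$ are precisely the faces of $\triangle(F)$. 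Since pulling introduces no new vertices (Theorem~\ref{theorem_pull}), this says that a subset $\sigma\subseteq I\times J$ is a face of $\triangle(F)$ if and only if it is a face of $\triangle(\partial P_{N})$; in other words $\triangle(F)$ is the induced subcomplex of $\triangle(\partial P_{N})$ on the vertex set $I\times J$.

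The conclusion is then immediate. By Theorem~\ref{theorem_pull_flag} the complex $\triangle(\partial P_{N})$ is flag, and an induced subcomplex of a flag complex is again flag; hence if every two-element subset of some $\sigma\subseteq I\times J$ is an edge of $\triangle(F)$, then each such pair is an edge of $\triangle(\partial P_{N})$, so $\sigma$ is a face of $\triangle(\partial P_{N})$, so $\sigma$ is a face of $\triangle(F)$. Thus $\triangle(F)$, and with it the original pulling triangulation of $\Delta_m\times\Delta_n$, is a flag complex. The one step that needs care is the restriction property invoked in the second paragraph: pulling the least vertex $v$ either keeps $F$ inside the complex $\partial P_{N}-v$, when $v\notin F$, or makes $v$ the least vertex of $F$ as well, when $v\in F$, and in the latter case one must check that among the coned faces $\conv(\{v\}\cup G)$ produced by Definition~\ref{definition_pull} exactly those arising from facets of $F$ itself lie in $F$. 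Matching the recursion for $\partial P_{N}$ against the recursion for $F$ in this way is the main, though routine, obstacle; everything else is bookkeeping through the affine identification.
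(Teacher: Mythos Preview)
Your proposal is correct and follows the same approach as the paper: the paper's entire proof is the sentence ``Since the Cartesian product of an $m$-dimensional simplex and an $n$-dimensional simplex is a face of the Legendre polytope of dimension $m+n+1$, we obtain the following corollary,'' and you have carefully spelled out the implicit steps---the affine identification of $\Delta_m\times\Delta_n$ with a facet of $P_{m+n+1}$, the extension of the pulling order, the restriction property of pulling triangulations, and the inheritance of flagness by induced subcomplexes.
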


\section{The arc representation of $\Gamma_{n}^{B}$}
\label{section_arc}

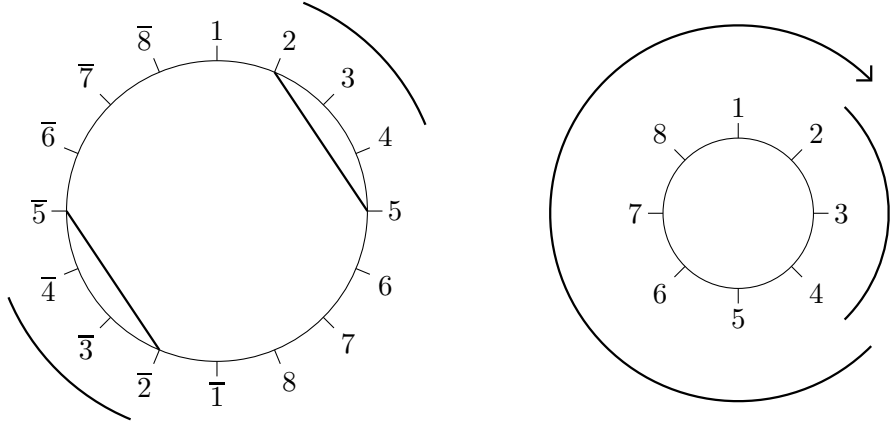
\begin{figure}
\begin{tikzpicture}
\draw(0mm,0mm) circle (20mm);

\draw(0mm,20mm) -- (0mm,22mm);
\draw(7.653mm,18.477mm) -- (8.419mm,20.325mm);
\draw(14.142mm,14.142mm) -- (15.556mm,15.556mm);
\draw(18.477mm,7.653mm) -- (20.325mm,8.419mm);
\draw(20mm,0mm) -- (22mm,0mm);
\draw(18.477mm,-7.653mm) -- (20.325mm,-8.419mm);
\draw(14.142mm,-14.142mm) -- (15.556mm,-15.556mm);
\draw(7.653mm,-18.477mm) -- (8.419mm,-20.325mm);
\draw(0mm,-20mm) -- (0mm,-22mm);
\draw(-7.653mm,-18.477mm) -- (-8.419mm,-20.325mm);
\draw(-14.142mm,-14.142mm) -- (-15.556mm,-15.556mm);
\draw(-18.477mm,-7.653mm) -- (-20.325mm,-8.419mm);
\draw(-20mm,0mm) -- (-22mm,0mm);
\draw(-18.477mm,7.653mm) -- (-20.325mm,8.419mm);
\draw(-14.142mm,14.142mm) -- (-15.556mm,15.556mm);
\draw(-7.653mm,18.477mm) -- (-8.419mm,20.325mm);

\label(0mm,0mm)
[label distance=100mm] 
\node[circle,
label=90:$1$,
label=70:$2$,
label=45:$3$,
label=20:$4$,
label=0:$5$, 
label=340:$6$, 
label=315:$7$, 
label=290:$8$, 
label=270:$\overline{1}$, 
label=250:$\overline{2}$, 
label=225:$\overline{3}$, 
label=200:$\overline{4}$, 
label=180:$\overline{5}$, 
label=160:$\overline{6}$,
label=135:$\overline{7}$,
label=110:$\overline{8}$
]{\hspace*{40mm}}; 

\draw[line width=0.3mm](7.653mm,18.477mm) -- (20mm,0mm);
\draw[line width=0.3mm](-7.653mm,-18.477mm) -- (-20mm,0mm);

\draw[line width=0.3mm] (27.716mm,11.480mm)
arc (22.5:67.5:30mm);
\draw[line width=0.3mm] (-27.716mm,-11.480mm)
arc (202.5:247.5:30mm);

\end{tikzpicture}
\hspace*{10mm}
\raisebox{0mm}{
\begin{tikzpicture}

\draw(0mm,0mm) circle (10mm);

\draw(0mm,10mm) -- (0mm,12mm);
\draw(7.071mm,7.071mm) -- (8.485mm,8.485mm);
\draw(10mm,0mm) -- (12mm,0mm);
\draw(7.071mm,-7.071mm) -- (8.485mm,-8.485mm);
\draw(0mm,-10mm) -- (0mm,-12mm);
\draw(-7.071mm,-7.071mm) -- (-8.485mm,-8.485mm);
\draw(-10mm,0mm) -- (-12mm,0mm);
\draw(-7.071mm,7.071mm) -- (-8.485mm,8.485mm);

\label(0mm,0mm)
[label distance=100mm] 
\node[circle,
label=90:$1$,
label=45:$2$,
label=0:$3$, 
label=315:$4$, 
label=270:$5$, 
label=225:$6$, 
label=180:$7$, 
label=135:$8$,
]{\hspace*{20mm}}; 

\draw[line width=0.3mm] (14.142mm,-14.142mm)
arc (315:405:20mm);

\draw[line width=0.3mm] (17.677mm,-17.677mm)
arc (315:45:25mm);

\draw[line width=0.3mm] (15.677mm,17.677mm)
-- (17.677mm,17.677mm) -- (17.677mm,19.677mm);

\end{tikzpicture}
}
\caption{The arc representation of the $B$-diagonal
consisting of the two diagonals $\{2,5\}$ and
$\{\overline{2},\overline{5}\} = \{10,13\}$
is the two arcs $[2,4]$ and $[\overline{2},\overline{4}] = [10,12]$.
By considering the arcs modulo $n+1=8$
(see the second circle)
we obtain that this $B$-diagonal is represented by the arrow~$(4,2)$.}
\label{figure_one}
\end{figure}

In this section we describe a representation of $\Gamma_{n}^{B}$ as a
simplicial complex whose vertices
are centrally symmetric pairs of ``arcs''
on a circle. This representation has a natural circular symmetry.

Consider a regular $(2n+2)$-gon whose vertices are labeled $1,2,
\ldots,n+1,\overline{1},\overline{2}, \ldots,
\overline{n+1}$ in the clockwise order. Identify each vertex 
$\overline{i}$ with $n+1+i$ for $i=1,2,\ldots, n+1$. Subject to this
identification, each $B$-diagonal, that is, a pair of diagonals,
may be represented as an unordered pair of diagonals of the form
$\{\{u,v\},\{u+n+1,v+n+1\}\}$ for some  $\{u,v\}\subseteq\{1,2,\ldots,
2n+2\}$, where addition is modulo~$2n+2$. For $B$-diagonals
$\{k, \overline{k}\}$ joining antipodal points, the unordered pair
$\{\{k,k+n+1\}, \{k+n+1,k+2n+2\}\}$ contains two copies of the same
two-element set.

For any two points $x$ and $y$ on the circle $\Rrr/(2n+2)\Zzz$ 
which are not antipodal, let
$[x,y]$ denote the shortest arc from $x$ to $y$.

\begin{definition}
\label{definition_arc-representation}
We define the {\em arc-representation} on the vertices of
$\Gamma_{n}^{B}$ as
follows. Subject to the above identifications, represent the 
$B$-diagonal $\{\{u,v\}, \{u+n+1, v+n+1\}\}$ with the centrally
symmetric pair of arcs $\{[u,v-1], [u+n+1,v+n]\}$ on the 
circle $\Rrr/(2n+2)\Zzz$.
\end{definition}
The representation above is well-defined: for any pair of
vertices $u$ and $v$,
the arc $[u,v-1]$ and the arc $[u+n+1,v+n]$ form a centrally symmetric
pair, so the definition is independent of the selection of the element
of $\{\{u,v\}, \{u+n+1,v+n+1\}\}$. Note that for $B$-diagonals of
the form $\{\{k,n+1+k\},\{k,n+1+k\}\}$ corresponding to antipodal pairs
of points, the union of the arcs $[k,k+n]$ and $[k+n+1,k-1]$ is not
the full circle.
 
See Figure~\ref{figure_one} for an example where
$n=7$ with the $B$-diagonal $\{\{2,5\},\{\overline{2},\overline{5}\}\}$.
 
\begin{lemma}
The arc-representation of the vertices of $\Gamma_{n}^{B}$ is one-to-one:
distinct $B$-diagonals are mapped to distinct centrally symmetric pairs 
of arcs.
\end{lemma}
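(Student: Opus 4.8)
The plan is to prove injectivity by exhibiting an explicit left inverse $\Phi$ of the arc-representation, which recovers a $B$-diagonal from the centrally symmetric pair of arcs it produces; a map that admits a left inverse is one-to-one.

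To build $\Phi$ I would first extract from Simion's parametrization a bound on the distinguished chord representative $\{u,v\}$ of a $B$-diagonal. For a diameter one takes $u=i$ and $v=\overline i$; for $\{\{i,j\},\{\overline i,\overline j\}\}$ with $i+1<j$ one takes $u=i<j=v$; and for $\{\{i,\overline j\},\{\overline i,j\}\}$ with $j<i$ one takes $u=i$ and $v=\overline j$. Checking each of these three cases shows that the clockwise distance from $u$ to $v$ along the $(2n+2)$-gon is at most $n+1$, with equality exactly for diameters. Hence the clockwise distance from $u$ to $v-1$ is at most $n$: the points $u$ and $v-1$ are never antipodal, and $[u,v-1]$ is precisely the clockwise arc $u,u+1,\dots,v-1$ (a single point when $v=u+1$). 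Likewise $[u+n+1,v+n]$ is the clockwise arc $u+n+1,\dots,v+n$, and, as already noted in the text, it is the image of $[u,v-1]$ under the antipodal map $x\mapsto x+n+1$.

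Next I would observe that every shortest arc $A$ of $\Rrr/(2n+2)\Zzz$ between two non-antipodal points has a well-defined \emph{tail} $t(A)$, namely the unique endpoint of $A$ from which moving clockwise immediately enters $A$, and a \emph{head} $h(A)$, the remaining endpoint (with $t(A)=h(A)$ when $A$ is a single point). By the previous paragraph, $t([u,v-1])=u$ and $h([u,v-1])=v-1$. For a centrally symmetric pair of arcs $\{A,A'\}$ in the image of the arc-representation I would then set
\[
\Phi(\{A,A'\})=\bigl\{\,\{t(A),h(A)+1\},\ \{t(A'),h(A')+1\}\,\bigr\}.
\]
Because $A'=A+(n+1)$, we have $t(A')=t(A)+n+1$ and $h(A')=h(A)+n+1$, so the right-hand side is of the form $\{\{u,v\},\{u+n+1,v+n+1\}\}$, i.e.\ it is the pair of chords of a $B$-diagonal; combining this with $t([u,v-1])=u$ and $h([u,v-1])=v-1$ gives $\Phi\bigl(\text{arc-rep}(D)\bigr)=D$ for every $B$-diagonal $D$, and injectivity follows.

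The step I expect to be the main obstacle is the bookkeeping that makes all of this legitimate, namely pinning down which element of the chord plays the role of $u$. Without the clockwise-distance bound the two endpoints $u$ and $v-1$ of $[u,v-1]$ cannot be told apart; indeed, for a non-diameter the pair $\{[u,v-1],[u+n+1,v+n]\}$ is genuinely different from the pair produced by the ordering $(v,u)$, so the arc-representation is well defined only because Simion's parametrization fixes the ordering — and the distance bound has to be checked in each of the three families, together with a sanity check that the degenerate single-vertex arc, if it arises at all, does not spoil the recovery. An equivalent, slightly more computational route avoids $\Phi$: if two $B$-diagonals have the same image, matching the endpoint pairs of the two arcs of one image against those of the other leaves a short list of cases, and in every case other than the one that forces the two $B$-diagonals to coincide the distance bounds produce a chord of clockwise length $1$, that is, a polygon edge, which is impossible.
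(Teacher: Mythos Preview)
Your approach is correct and is essentially the same as the paper's: both recover the $B$-diagonal from its arc pair by reading off $u$ and $v-1$. The paper's entire argument is the single sentence ``Indeed, the pair $\{[u,v-1],[u+n+1,v+n]\}$ can only be the image of the unordered pair $\{\{u,v\},\{u+n+1,v+n+1\}\}$,'' which implicitly treats the arc as carrying enough data to recover $u$ and $v-1$ in order; you have unpacked this by verifying the clockwise-distance bound and introducing the tail/head of an arc, making explicit why the recovery works, but the underlying idea is identical.
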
  
Indeed, the pair $\{[u,v-1],[u+n+1,v+n]\}$ can only be the image of the
unordered pair $\{\{u,v\},\{u+n+1,v+n+1\}\}$. 

The following theorem plays an important role in 
connecting the type $B$ associahedron with the Legendre polytope.
\begin{theorem}
\label{theorem_across}
The $B$-diagonal represented by the pair of arcs 
$\{[u_{1}, v_{1}-1], [u_{1}+n+1,v_{1}+n]\}$ and the $B$-diagonal
represented by the pair of arcs $\{[u_{2}, v_{2}-1], [u_{2}+n+1,v_{2}+n]\}$ are
noncrossing if and only if for either arc $I\in \{[u_{1},
  v_{1}-1],[u_{1}+n+1,v_{1}+n]\}$ and for either arc $J\in \{[u_{2}, v_{2}-1],
[u_{2}+n+1,v_{2}+n]\}$, the arcs $I$ and $J$ are either nested or disjoint.  
\end{theorem}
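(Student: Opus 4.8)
The plan is to reduce the statement to a single fact about ordinary chords of the $(2n+2)$-gon and then reassemble. By definition two $B$-diagonals are noncrossing precisely when no ordinary diagonal (chord) of the first crosses any chord of the second. Write the first $B$-diagonal's chords as $C_1,C_2$ and the second's as $D_1,D_2$, and let $I_k,J_k$ denote the arc that $C_k$, respectively $D_k$, contributes under the arc-representation of Definition~\ref{definition_arc-representation}, so that $\{I_1,I_2\}$ and $\{J_1,J_2\}$ are the two arc-pairs appearing in the statement; when a $B$-diagonal is a diameter, its single chord is read with both orientations, producing its two complementary arcs. Both sides of the claimed equivalence are conjunctions over the (at most four) pairs $(C_i,D_j)$: the left-hand side asserts that each $C_i$ misses each $D_j$, and the right-hand side that each $I_i$ is nested with or disjoint from each $J_j$. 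So it suffices to prove the chord-level statement: \emph{a chord $C=\{u,v\}$ crosses a chord $D=\{c,d\}$ if and only if the associated short arcs $[u,v-1]$ and $[c,d-1]$ are neither nested nor disjoint.}

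For the chord-level statement I would exploit that these arcs are \emph{short}: each, being a side of a $B$-diagonal, is strictly shorter than a semicircle of the $(2n+2)$-gon (the two halves carried by a diameter span exactly $n$ of the $2n+2$ unit steps), so $[u,v-1]\cup[c,d-1]$ is a proper sub-arc of $\Rrr/(2n+2)\Zzz$. Consequently one may rotate and unroll the circle so that the two arcs become ordinary integer intervals and the chords $\{u,v\},\{c,d\}$ become pairs of integers, the crossing of the chords being preserved because it depends only on the cyclic order of $u,v,c,d$. The claim then reduces to the elementary observation that the pairs $\{u,v\},\{c,d\}$ interleave and are disjoint if and only if the intervals $[u,v-1],[c,d-1]$ overlap partially, i.e.\ are neither nested nor disjoint — the half-step retraction $v\mapsto v-1$, $d\mapsto d-1$ in the arc-representation being exactly what makes these two conditions match. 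The configurations left over, where the chords share a vertex of the polygon, make both sides false: such chords do not cross, while the corresponding intervals come out nested (chords with a common tail or common head share an interval endpoint) or disjoint (if the head of one is the tail of the other, the intervals are merely adjacent). This establishes the chord-level statement; taking the conjunction over the pairs $(C_i,D_j)$ yields the theorem, and for a diameter $\{k,\overline{k}\}$ one also records the immediate facts that an ordinary chord meets the diameter iff its arc straddles the two half-circles $[k,k+n]$ and $[k+n+1,k-1]$, and that two distinct diameters always meet at the center while no pair among their four half-circles is nested or disjoint.

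I expect no deep obstacle; the work is careful bookkeeping. The delicate points are: fixing the correct orientation of each $B$-diagonal's chords so that $[u,v-1]$ is genuinely the short arc on the appropriate side of the $(2n+2)$-gon; treating a diameter as a single chord that nonetheless carries two arcs; and justifying the ``unroll the circle outside the two arcs'' reduction, which is exactly where shortness enters and which should be checked by hand for the few small $n$ where the circle leaves little room. Granting these, the equivalence is a routine case check, and the point of the seemingly ad hoc shift $v\mapsto v-1$ in Definition~\ref{definition_arc-representation} is precisely that it makes the chord-crossing and arc-overlap conditions coincide exactly, degenerate cases included.
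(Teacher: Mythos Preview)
Your approach is essentially the same as the paper's: both reduce the biconditional to a chord-level statement (one chord against one arc from each side) and then verify that statement by a short case analysis, with the paper computing the arc intersections directly where you invoke shortness to unroll onto a line. The paper's concluding remark that ``the above argument remains valid if we replace either (or both) of $[u_1,v_1-1]$ and $[u_2,v_2-1]$ with the arc $[u_1+n+1,v_1+n]$ or $[u_2+n+1,v_2+n]$'' is exactly your observation that both sides are conjunctions over the four pairs $(C_i,D_j)$, so the reductions match.
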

\begin{proof}
Assume first that the two $B$-diagonals cross. Since we may replace
$\{u_{i},v_{i}\}$ with $\{u_{i}+n+1,v_{i}+n+1\}$ if necessary, without loss of
generality we may assume that the diagonal $\{u_{1},v_{1}\}$ crosses the
diagonal $\{u_{2},v_{2}\}$.
Exactly one of the
endpoints of the diagonal $\{u_{2},v_{2}\}$ must then belong to the arc
$[u_{1}+1,v_{1}-1]$ and the other one does not belong even to the larger arc
$[u_{1},v_{1}]$. If $u_{2}\in [u_{1}+1,v_{1}-1]$ and $v_{2}\not\in [u_{1},v_{1}]$ 
then we have
$$
[u_{1}, v_{1}-1]\cap [u_{2}, v_{2}-1]=[u_{2},v_{1}-1].
$$
This arc contains $u_{2}$, but does not contain $u_{1}$ (since $u_{2}\in
[u_{1}+1,v_{1}-1]$), nor does it contain $v_{2}-1\not \in [u_{1},v_{1}-1]$. The
arcs $[u_{1}, v_{1}-1]$  and $[u_{2}, v_{2}-1]$ do not contain each other
and they are not disjoint.

If $v_{2}\in [u_{1}+1,v_{1}-1]$ and $u_{2}\not\in [u_{1},v_{1}]$ 
then we have
$$
[u_{1}, v_{1}-1]\cap [u_{2}, v_{2}-1]=[u_{1},v_{2}-1].
$$
Similar to the previous case, we obtain that the arcs $[u_{1}, v_{1}-1]$
and $[u_{2}, v_{2}-1]$ do not contain each other and they are not disjoint.

Assume next that the two diagonals do not cross. If $u_{2}\in
[u_{1}+1,v_{1}-1]$ then we must have $v_{2}\in [u_{2}+1,v_{1}]$, implying 
$$
[u_{1}, v_{1}-1]\cap [u_{2}, v_{2}-1]=[u_{2},v_{2}-1].
$$
Similarly, if $v_{2}\in [u_{1}+1,v_{1}-1]$ then we must have $u_{2}\in
[u_{1},v_{2}]$, implying
$$
[u_{1}, v_{1}-1]\cap [u_{2}, v_{2}-1]=[u_{2},v_{2}-1].
$$
Finally, if neither $u_{2}$ nor $v_{2}$ belongs to $[u_{1}+1,v_{1}-1]$ then
either the arc $[u_{2},v_{2}-1]$ contains the arc $[u_{1},v_{1}-1]$ or it is disjoint from it. 
The above argument remains valid if we replace either (or both) of
$[u_{1}, v_{1}-1]$ and $[u_{2}, v_{2}-1]$ with the arc $[u_{1}+n+1, v_{1}+n]$ or
$[u_{2}+n+1, v_{2}+n]$, respectively. 
\end{proof}
\begin{corollary}
\label{corollary_across}  
The $B$-diagonal represented by the pair of arcs 
$\{[u_{1}, v_{1}-1], [u_{1}+n+1,v_{1}+n]\}$ and the $B$-diagonal
represented by the pair of arcs $\{[u_{2}, v_{2}-1], [u_{2}+n+1,v_{2}+n]\}$ are
noncrossing if and only the set $[u_{1}, v_{1}-1]\cup [u_{1}+n+1,v_{1}+n]$ and
the set $[u_{2}, v_{2}-1]\cup [u_{2}+n+1,v_{2}+n]$ are nested or disjoint. 
\end{corollary}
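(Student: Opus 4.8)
The plan is to deduce Corollary~\ref{corollary_across} from Theorem~\ref{theorem_across} by a purely set-theoretic argument about centrally symmetric pairs of arcs. Write $A_1 = [u_1,v_1-1]$, $A_1' = [u_1+n+1,v_1+n]$ for the two arcs of the first $B$-diagonal, and similarly $A_2, A_2'$ for the second, and set $S_1 = A_1 \cup A_1'$, $S_2 = A_2 \cup A_2'$. Note that $A_1$ and $A_1'$ are disjoint (their union is not the whole circle, as observed after Definition~\ref{definition_arc-representation}, and they are antipodal), and likewise $A_2, A_2'$ are disjoint; moreover each $S_i$ is centrally symmetric. Theorem~\ref{theorem_across} says the two $B$-diagonals are noncrossing if and only if every one of the four pairs $(A_1,A_2)$, $(A_1,A_2')$, $(A_1',A_2)$, $(A_1',A_2')$ is nested or disjoint; I must show this holds if and only if $S_1$ and $S_2$ are nested or disjoint.

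First I would prove the easy direction: if $S_1$ and $S_2$ are disjoint, then in particular $A_1 \subseteq S_1$ is disjoint from $A_2, A_2' \subseteq S_2$, and similarly for $A_1'$, so all four pairs are disjoint and we are done. If instead $S_1 \subseteq S_2$ (the case $S_2 \subseteq S_1$ being symmetric), take any arc $A \in \{A_1, A_1'\}$; then $A \subseteq S_1 \subseteq S_2 = A_2 \sqcup A_2'$. Since $A$ is a connected arc and $A_2, A_2'$ are disjoint arcs, $A$ must lie entirely inside one of them — here I would invoke connectedness of $A$ on the circle together with the fact that $A \neq S_2$ (because $A$ is a single arc strictly smaller than a half-circle while $S_2$ is a union of two antipodal arcs, so $A$ cannot equal $S_2$, which also prevents $A$ from "wrapping around" and meeting both components). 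Hence each of $A_1, A_1'$ is nested inside one of $A_2, A_2'$, and every one of the four pairs is nested or disjoint.

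For the converse, suppose all four pairs $(A_i^{(\ast)}, A_j^{(\ast)})$ are nested or disjoint; I want to conclude $S_1$ and $S_2$ are nested or disjoint. If all four pairs are disjoint, then $S_1 = A_1 \cup A_1'$ is disjoint from $S_2 = A_2 \cup A_2'$, done. Otherwise some pair is nesting; using central symmetry we may reduce to the case $A_1 \subseteq A_2$ (if $A_1 \subseteq A_2'$, apply the antipodal map; if the nesting pair involves $A_1'$, swap the roles of the unprimed and primed arcs of the first diagonal). Applying central symmetry to $A_1 \subseteq A_2$ gives $A_1' \subseteq A_2'$, hence $S_1 \subseteq S_2$. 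The only subtlety is the reverse nesting $A_2 \subseteq A_1$: then symmetrically $A_2' \subseteq A_1'$ and $S_2 \subseteq S_1$. In every case the four-pairs condition forces $S_1$ and $S_2$ to be nested or disjoint, completing the equivalence.

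The main obstacle I anticipate is the geometric bookkeeping in the disjoint-arcs argument: carefully justifying that a single arc $A$ (necessarily shorter than a half-circle, since it arises as a shortest arc $[x,y]$ between non-antipodal points) contained in a union of two disjoint antipodal arcs must sit inside one of them. This needs the observations that $A$ is connected, that its complement in $S_2$ "separates" the two components of $S_2$, and that $A$ cannot coincide with or span across the antipodal gap. Once this lemma-like fact is isolated, the rest is a routine case analysis driven by central symmetry, and the corollary follows directly from Theorem~\ref{theorem_across}.
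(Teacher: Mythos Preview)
Your proposal is correct and is precisely the natural set-theoretic unpacking of Theorem~\ref{theorem_across}; the paper itself states the corollary without proof, treating it as an immediate consequence of the theorem. Your argument via central symmetry (the antipodal map taking $A_1\subseteq A_2$ to $A_1'\subseteq A_2'$) and connectedness of arcs is exactly what makes the implication ``obvious,'' so there is no real difference in approach---you have simply written out what the paper leaves implicit.
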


\section{Embedding $\Gamma_{n}^{B}$ as a family of simplices on $\partial P_{n}$}
\label{section_embed}

In this section we describe a way to represent the boundary complex
$\Gamma_{n}^{B}$ of Simion's type $B$ associahedron as a family
of simplices on the vertex set of the Legendre polytope $P_{n}$. We do
this so that each simplex is contained in a face of the
boundary $\partial P_{n}$ of $P_{n}$. In Section~\ref{section_triangulation} we will
prove that our map represents the boundary complex of the type $B$
associahedron as a pulling triangulation of $\partial P_{n}$.
 
We begin by defining a bijection between the vertex set of $\Gamma_{n}^{B}$
and that of $P_{n}$.
Recall that we use
the shorthand notation $(i,j)$ for the vertex $e_{j}-e_{i}$ of $P_{n}$.
We refer to $(i,j)$ as {\em the arrow  from $i$ to $j$}. Using the term
``arrow'' as opposed to ``directed edge'' will eliminate the
confusion that $e_{j}-e_{i}$ is a vertex of $P_{n}$.
Instead we think of it as an arrow from $i$ to $j$ in the complete
directed graph on the vertex set $\{1,2,\ldots,n+1\}$ having no loops.

\begin{definition}
\label{definition_arrow_representation}
Let $\{[i,j], [\overline{i},\overline{j}]\}$ be the arc representation
of a $B$-diagonal in $\Gamma_{n}^{B}$,
where $1 \leq i \leq n+1$ and $i < j$.
Define {\em the arrow representation} 
of this $B$-diagonal in $P_{n}$ to be the arrow~$(j,i)$.
\end{definition}
In other words,
the arrow encodes the complement of
the image of the arcs in the circle $\Rrr/(n+1)\Zzz$.
We refer to
the second circle in Figure~\ref{figure_one}
for the continuation of the example 
of the $B$-diagonal $\{\{2,5\},\{\overline{2},\overline{5}\}\}$.

When making this definition explicit for
a $B$-diagonal $\{\{i,j\},\{\overline{i}, \overline{j}\}\}$,
we obtain several cases:
\begin{enumerate}[(1)]
\item
For each $i$ satisfying $2 \leq i \leq n+1$,
represent the $B$-diagonal
$\{i,\overline{i}\}$ connecting two antipodal points with
the arrow~$(i-1,i)$. Represent the $B$-diagonal $\{1,\overline{1}\}$
with the arrow~$(n+1,1)$.
\item
For each $i$ and $j$ satisfying $1\leq i<i+1<j\leq n+1$,  
represent the $B$-diagonal $\{\{i,j\},\{\overline{i}, \overline{j}\}\}$
with the arrow~$(j-1,i)$.
\item
For each $i$ and $j$ satisfying $2\leq j<i\leq n+1$, 
represent the $B$-diagonal
$\{\{i,\overline{j}\}, \{\overline{i},j\}\}$
with the arrow~$(j-1,i)$. For each $i$ satisfying $2 \leq i \leq n+1$,
represent the $B$-diagonal $\{\{1,\overline{i}\},\{\overline{1},i\}\}$
with the arrow~$(n+1,i)$.
\end{enumerate}

This representation yields a bijection between 
$B$-diagonals and arrows. The inverse map is given as follows:
\begin{enumerate}[(a)]
\item
For $i$ satisfying $1 \leq i \leq n$,
the arrow $(i,i+1)$ represents the $B$-diagonal $\{i+1,\overline{i+1}\}$
and
the arrow $(n+1,1)$ represents the $B$-diagonal $\{1,\overline{1}\}$.
\item
For each $i$ and $j$ satisfying $1\leq i<j\leq n$, the arrow
$(j,i)$ represents the $B$-diagonal
$\{\{i,j+1\},\{\overline{i},\overline{j+1}\}\}$. 
\item
For each $i$ and $j$ satisfying $1\leq j<i-1<i\leq n+1$, the
arrow $(j,i)$ represents the $B$-diagonal
$\{\{i,\overline{j+1}\},\{\overline{i},j+1\}\}$, and for  
$2 \leq i \leq n+1$, the arrow $(n+1,i)$
represents the $B$-diagonal $\{\{1,\overline{i}\}, \{\overline{1},i\}\}$.   
\end{enumerate}

The $B$-diagonals in
item~(1) of
Definition~\ref{definition_arrow_representation}
may be thought of as a ``degenerate case'' of the $B$-diagonals
in item~(3). In the case when $i=j$, the set
$\{\{\overline{i},j\},\{i,\overline{j}\}\}$ becomes a singleton, and the
rules in item~(1) may be obtained by extending the rules
in item~(3) in an obvious way. On the other hand,
Definition~\ref{definition_arrow_representation} has a simpler form in
terms of the arc-representation 
described in Section~\ref{section_arc} and in terms of the following
continuous map.

\begin{definition}
\label{definition_pi}
Define the map
$\pi: \Rrr/(2n+2)\Zzz \longrightarrow \Rrr/(n+1)\Zzz$
to be the modulo $n+1$ map.
Furthermore, identify the circle $\Rrr/(n+1)\Zzz$
with the half-open interval
$(0,n+1]$.
Thus $\pi$ sends each $x \in (0,n+1]$ to~$x$
and each $x \in (n+1,2n+2]$ to $x-n-1$.  
\end{definition}
Observe that the map $\pi$ depends on $n$.
However, we suppress this dependency
by not writing $\pi_{n}$. 
Also observe that the map $\pi$ is a two-to-one mapping:
for each $y\in \Rrr/(n+1)\Zzz$ we have $|\pi^{-1}(y)|=2$.

\begin{remark}
\label{remark_arrows}
{\em  For any pair of arcs $\{[u,v-1],[u+n+1,v+n]\}$ there is a unique way to
select~$u$ to be an element of the set $\{1,2,\ldots,n+1\}$,
that is, $\pi(u)=u$. We may distinguish two
cases depending upon whether the arc $[u,v-1]$ is a subset of the arc
$[u,n+1]$ or not.
\begin{enumerate}[(i)]
\item
If $[u,v-1]\subseteq [u,n+1]$ then visualize
the set $\pi([u,v-1])=\pi([u+n+1,v+n])$ as the subinterval
$[u,v-1]$ of $(0,n+1]$. The direction of both arcs
$[u,v-1]$ and $[u+n+1,v+n]$ corresponds to parsing the interval
$[u,v-1]$ in increasing order. Adding the associated backward arrow
$(v-1,u)$ closes a directed cycle with this directed interval.
$$
\begin{tikzpicture}
\draw (0,0) -- (8,0);
\foreach \x in {0,...,8}
\draw (\x,-0.2) -- (\x,0.2);
\foreach \x in {0,...,8}
\node[black] at (\x,-0.4) {\small$\x$};

\node[anchor=east] at (4.3,0) (text) {};
\node[anchor=west] at (1.7,0) (description) {};
\draw[line width=0.5mm] (description) edge[out=0,in=180,->] (text);

\node[anchor=east] at (2,0.2) (text) {};
\node[anchor=west] at (4,0.2) (description) {};
\draw[line width=0.5mm] (description) edge[out=135,in=45,->] (text);
\end{tikzpicture}
$$
As an example, when $n=7$ then the $B$-diagonal 
$\{\{2,5\},\{\overline{2},\overline{5}\}\}$
is represented by the backward arrow $(4,2)$
as drawn above on the interval $(0,8]$.
\item
If the
arc $[u,v-1]$ is not contained in the arc 
$[u,n+1]$ then $n+1 < v-1 < u+n+1$.
The integer $\pi(v-1)=v-1-(n+1)$ is congruent to $v-1$ modulo~$(n+1)$
and satisfies $1\leq \pi(v-1)<u$.
The image of the arc $[u,v-1]$ under $\pi$,
that is,
$\pi([u,v-1])=\pi([u+n+1,v+n])$ is then the subset 
$(0,\pi(v)-1]\cup [u,n+1]$ of the interval $(0,n+1]$.
We may consider $(0,\pi(v)-1]\cup [u,n+1]$ as a ``wraparound
interval'' modulo~$n+1$ from $u$ to $\pi(v-1)$.
The direction of both pieces
corresponds to listing the elements of this ``wraparound
interval'' in increasing order modulo $n+1$.  
Adding the associated forward arrow $(\pi(v-1),u)$ closes a directed
cycle with the directed wraparound interval.
$$
\begin{tikzpicture}
\draw (0,0) -- (8,0);
\foreach \x in {0,...,8}
\draw (\x,-0.2) -- (\x,0.2);
\foreach \x in {0,...,8}
\node[black] at (\x,-0.4) {\small$\x$};

\node[anchor=east] at (3.3,0) (text) {};
\node[anchor=west] at (-0.3,0) (description) {};
\draw[line width=0.5mm] (description) edge[out=0,in=180,->] (text);

\node[anchor=east] at (8.3,0) (text) {};
\node[anchor=west] at (5.7,0) (description) {};
\draw[line width=0.5mm] (description) edge[out=0,in=180,->] (text);

\node[anchor=east] at (6.2,0.2) (text) {};
\node[anchor=west] at (2.8,0.2) (description) {};
\draw[line width=0.5mm] (description) edge[out=45,in=135,->] (text);
\end{tikzpicture}
$$
For instance, when $n = 7$
the $B$-diagonal $\{\{4,\overline{6}\},\{\overline{4},6\}\}$
yields the forward arrow $(3,6)$.
\end{enumerate}
}
\end{remark}

\begin{proposition}
\label{proposition_crossing_pi}
The $B$-diagonal represented by the arrow $(\pi(v_{1}-1),\pi(u_{1}))$ and
the $B$-diagonal represented by the arrow $(\pi(v_{2}-1),\pi(u_{2}))$
are noncrossing if and only if the images $\pi([u_{1},v_{1}-1])$ and
$\pi([u_{2},v_{2}-1])$ are disjoint or contain each other.  
\end{proposition}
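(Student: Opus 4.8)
The plan is to deduce the statement from Corollary~\ref{corollary_across} by transporting the ``nested or disjoint'' condition through the two-to-one map $\pi$. For $i=1,2$ write
\[
S_{i}=[u_{i},v_{i}-1]\cup[u_{i}+n+1,v_{i}+n]\subseteq\Rrr/(2n+2)\Zzz
\]
for the centrally symmetric pair of arcs representing the $i$th $B$-diagonal, and set $T_{i}=\pi([u_{i},v_{i}-1])$, which by Definition~\ref{definition_pi} also equals $\pi([u_{i}+n+1,v_{i}+n])$. By the construction of the arrow representation (Definition~\ref{definition_arrow_representation} and Remark~\ref{remark_arrows}) the arrow $(\pi(v_{i}-1),\pi(u_{i}))$ is exactly the arrow representing this $B$-diagonal, so Corollary~\ref{corollary_across} says that the two $B$-diagonals are noncrossing if and only if $S_{1}$ and $S_{2}$ are nested or disjoint. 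Hence it suffices to prove that $S_{1},S_{2}$ are nested or disjoint in $\Rrr/(2n+2)\Zzz$ if and only if $T_{1},T_{2}$ are nested or disjoint in $\Rrr/(n+1)\Zzz$.

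The crux is the identity $S_{i}=\pi^{-1}(T_{i})$. Since the fibers of $\pi$ are exactly the pairs $\{x,x+n+1\}$, we have $\pi^{-1}(\pi(X))=X\cup(X+n+1)$ for every subset $X$, and taking $X=[u_{i},v_{i}-1]$ gives $\pi^{-1}(T_{i})=[u_{i},v_{i}-1]\cup[u_{i}+n+1,v_{i}+n]=S_{i}$, provided the two arcs forming $S_{i}$ are genuinely disjoint. For that it is enough to observe that each arc $[u_{i},v_{i}-1]$ is a shortest arc between non-antipodal points of the $(2n+2)$-gon, hence has length at most $n$, and any arc of length at most $n$ in $\Rrr/(2n+2)\Zzz$ is disjoint from its antipode. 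I would isolate and check this small length estimate first, since it is the only point where the geometry of the specific arcs enters.

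Granting $S_{i}=\pi^{-1}(T_{i})$, the rest is formal, using that $\pi$ is onto. Preimages commute with intersection, so $S_{1}\cap S_{2}=\pi^{-1}(T_{1}\cap T_{2})$, which is empty precisely when $T_{1}\cap T_{2}=\emptyset$; thus $S_{1},S_{2}$ are disjoint if and only if $T_{1},T_{2}$ are. Likewise $S_{1}\subseteq S_{2}$ holds if and only if $\pi^{-1}(T_{1})\subseteq\pi^{-1}(T_{2})$, which by surjectivity of $\pi$ holds if and only if $T_{1}\subseteq T_{2}$, and the same with the indices swapped; hence $S_{1},S_{2}$ are nested if and only if $T_{1},T_{2}$ are. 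Combining the two cases with Corollary~\ref{corollary_across} proves the proposition.

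I do not anticipate a real obstacle here: the substantive content is already packaged in Corollary~\ref{corollary_across}, and the only care needed is the disjointness of the two antipodal arcs forming $S_{i}$, without which $\pi^{-1}(T_{i})$ need not equal $S_{i}$. A more hands-on alternative would be to repeat the arc-crossing case analysis from the proof of Theorem~\ref{theorem_across} directly with the $\pi$-images, but the preimage argument above is shorter and exhibits the role of $\pi$ cleanly.
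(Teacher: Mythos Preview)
Your proposal is correct and follows essentially the same route as the paper: the paper's proof consists of the single observation that $[u,v-1]\cup[u+n+1,v+n]=\pi^{-1}([\pi(u),\pi(v-1)])$ and then (implicitly) appeals to Corollary~\ref{corollary_across}, which is exactly your argument with the details of the preimage transfer spelled out. Your extra care in verifying that the two antipodal arcs are disjoint (so that $S_{i}=\pi^{-1}(T_{i})$ genuinely holds) is a detail the paper leaves implicit.
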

\begin{proof}
The statement is an easy consequence of the following observation.
Both arcs of the arc representation $\{[u,v-1],[u+n+1,v+n]\}$ of a
$B$-diagonal are mapped onto the same arc $[\pi(u),\pi(v-1)]$ by
$\pi$ and $[u,v-1]\cup [u+n+1,v+n]=\pi^{-1}([\pi(u),\pi(v-1)])$. 
\end{proof}

Next we translate the noncrossing conditions for $B$-diagonals into
conditions for the arrows representing them. 
\begin{proposition}
\label{proposition_across}
Suppose a pair of $B$-diagonals is represented by a pair of arrows as
defined in Definition~\ref{definition_arrow_representation}.
These $B$-diagonals cross
if and only if one of the following conditions is satisfied:
\begin{enumerate}[(1)]
\item Both arrows are backward and they cross.
\item Both arrows are forward and they do not nest.
\item One arrow is forward, the other one is backward, and
the backward arrow nests or crosses the forward arrow.
\item The head of one arrow is the tail of the other arrow.
\end{enumerate}    
\end{proposition}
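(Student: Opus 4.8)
The plan is to deduce this from Proposition~\ref{proposition_crossing_pi} by a case analysis on the types of the two arrows. Write $t_{i}$ and $h_{i}$ for the tail and head of the arrow representing the $i$th $B$-diagonal, so that by Proposition~\ref{proposition_crossing_pi} and Remark~\ref{remark_arrows} we have $t_{i}=\pi(v_{i}-1)$ and $h_{i}=\pi(u_{i})=u_{i}$, and the two $B$-diagonals are noncrossing precisely when the images $A_{1}:=\pi([u_{1},v_{1}-1])$ and $A_{2}:=\pi([u_{2},v_{2}-1])$ are disjoint or one contains the other. Thus everything reduces to rewriting the statement ``$A_{1}$ and $A_{2}$ are neither disjoint nor nested'' in terms of the $t_{i}$ and $h_{i}$.

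First I would read off the shape of $A_{i}$ from Remark~\ref{remark_arrows}. Every arrow is either backward (that is, $t_{i}>h_{i}$) or forward (that is, $t_{i}<h_{i}$), since no arrow is a loop. In the backward case $A_{i}$ is the ordinary closed subinterval $[h_{i},t_{i}]$ of the half-open interval $(0,n+1]$; in the forward case $A_{i}$ is the complement in the circle $\Rrr/(n+1)\Zzz$ of the ordinary open interval $(t_{i},h_{i})\subseteq(0,n+1)$. Arrows arising from a diameter, or from a vertex labelled $n+1$, fit this dichotomy with nothing extra to verify, since Proposition~\ref{proposition_crossing_pi} is uniform in all cases.

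The heart of the argument is then a three-case analysis.
\textbf{Both arrows backward.} Here $A_{1}$ and $A_{2}$ are ordinary subintervals of $(0,n+1]$, hence they are neither nested nor disjoint exactly when their four endpoints strictly interleave on the line, which is precisely the statement that the two backward arrows cross (condition~(1)), or when they have exactly one endpoint in common; a common endpoint which is the head of one arrow and the tail of the other yields condition~(4), whereas two common heads or two common tails force one interval inside the other, giving a noncrossing pair.
\textbf{Both arrows forward.} Here $A_{i}$ is the complement of the ordinary open interval $(t_{i},h_{i})\subseteq(0,n+1)$, and neither of these two open intervals contains the point $n+1$; therefore $n+1\in A_{1}\cap A_{2}$, so the two images cannot be disjoint. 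Consequently ``$A_{1},A_{2}$ disjoint or nested'' reduces to ``$A_{1}\subseteq A_{2}$ or $A_{2}\subseteq A_{1}$'', equivalently to ``$(t_{1},h_{1})\subseteq(t_{2},h_{2})$ or $(t_{2},h_{2})\subseteq(t_{1},h_{1})$'', which is exactly the statement that the two forward arrows nest; so the $B$-diagonals cross if and only if the forward arrows do not nest, which is condition~(2).
\textbf{One arrow backward, one forward.} Say the first arrow is backward, so $A_{1}=[h_{1},t_{1}]$, and the second is forward, so $A_{2}$ is the complement of the open interval $(t_{2},h_{2})$. Running through the possible positions of $h_{1}<t_{1}$ relative to $t_{2}<h_{2}$ in $(0,n+1]$ shows that $A_{1}$ and $A_{2}$ are neither nested nor disjoint exactly when the interval $[h_{1},t_{1}]$ either strictly interleaves with, or strictly contains, the interval $[t_{2},h_{2}]$, that is, when the backward arrow crosses or nests the forward arrow (condition~(3)), together with the degenerate configuration in which the head of one of the two arrows equals the tail of the other (condition~(4)); in the remaining configurations (two common heads, two common tails, or the interval $[t_{2},h_{2}]$ strictly containing $[h_{1},t_{1}]$) the pair is noncrossing.

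I expect the main difficulty to be bookkeeping rather than anything conceptual. One has to isolate the configurations in which two of the four relevant endpoints coincide, which are precisely the ones producing condition~(4), and confirm that condition~(4) subsumes exactly those ``single common endpoint equal to a head and to a tail'' situations occurring across cases (1)--(3), and nothing more. A secondary point requiring care is the wraparound in the forward case, where the point $n+1$ of the circle $\Rrr/(n+1)\Zzz$ must be tracked; the observation that the open interval attached to a forward arrow is a genuine subinterval of $(0,n+1)$ is exactly what keeps the comparison of the two wraparound arcs manageable.
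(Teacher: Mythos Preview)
Your proposal is correct and follows essentially the same route as the paper's proof: both start from Proposition~\ref{proposition_crossing_pi}, identify the image $\pi([u_i,v_i-1])$ as an ordinary interval $[h_i,t_i]$ in the backward case and as the circle minus the open interval $(t_i,h_i)$ in the forward case, and then carry out the same three-way case analysis (both backward, both forward, mixed), using in the forward case the observation that the point $n+1$ lies in both images so disjointness is impossible. The only cosmetic difference is that in the mixed case the paper organizes the subcases by the number of connected components of $A_1\cap A_2$, whereas you organize them by the relative position of $[h_1,t_1]$ and $[t_2,h_2]$; the content is the same.
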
  
\begin{proof}
Suppose the arc representations of the two $B$-diagonals are
$\{[u_{1},v_{1}-1],[u_{1}+n+1,v_{1}+n]\}$ and $\{[u_{2},v_{2}-1],[u_{2}+n+1,v_{2}+n]\}$,
respectively. By Proposition~\ref{proposition_crossing_pi}, the represented $B$-diagonals
are crossing if and only if the images
$\pi([u_{1},v_{1}-1])$ and 
$\pi([u_{2},v_{2}-1])$
are not disjoint and do not contain each other.

We will consider three cases, depending on the
direction of the two arrows $(\pi(v_{1}-1),\pi(u_{1}))$ and
$(\pi(v_{2}-1),\pi(u_{2}))$. These arrows are either both forward, 
both backward or have opposite directions.

If both arrows are backward then neither the image
$\pi([u_{1},v_{1}-1])$ nor the image $\pi([u_{2},v_{2}-1])$
contain the point $n+1$.
Two such intervals intersect nontrivially in an interval of positive length
exactly when the corresponding arrows cross. They intersect in a
single point exactly when there is a vertex that is the tail of one of
the arrows and the head of the other arrow.

If both arrows are forward then both images
$\pi([u_{1},v_{1}-1])$ and $\pi([u_{2},v_{2}-1])$ contain
the point $n+1$, so they cannot be disjoint. They do not contain each
other exactly when the corresponding arrows are not nested. Note that a
pair of forward arrows such that the head of one arrow is the same as
the tail of the other is particular example of a pair of nonnested arrows.

Assume finally that one of the arrows, say $(\pi(v_{1}-1),\pi(u_{1}))$,
is a backward arrow and the other one, say $(\pi(v_{2}-1),\pi(u_{2}))$,
is a forward arrow. The image $\pi([u_{2},v_{2}-1])$
cannot be a
subset of $\pi([u_{1},v_{1}-1])$ as the first image contains
the point $n+1$ whereas the second does not.
The image $\pi([u_{1},v_{1}-1])$
is the interval $[\pi(u_{1}),\pi(v_{1}-1)]$,
whereas
the image $\pi([u_{2},v_{2}-1])$
is the union $(0,\pi(v_{2}-1)] \cup [\pi(u_{2}),n+1]$. 
The intersection of these two sets has either
zero, one or two connected components.
If one component is a single point
then this point is the head of one arrow
and the tail of the other.
If both components are non-trivial
intervals then the backward
arrow nests the forward arrow.
If they intersect in one interval
but the image $\pi([u_{1},v_{1}-1])$
does not contain
the image $\pi([u_{2},v_{2}-1])$
then the two arrows cross.
Finally, if the image $\pi([u_{1},v_{1}-1])$
does contain
the image $\pi([u_{2},v_{2}-1])$
then the arrows neither cross nor nest.
\end{proof}  
An immediate consequence of
Proposition~\ref{proposition_across}
and Lemma~\ref{lemma_facets}
is the following corollary.
\begin{corollary}
Noncrossing sets of $B$-diagonals correspond to subsets of vertices
contained in a facet of the Legendre polytope $P_{n}$. 
\end{corollary}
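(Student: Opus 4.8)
The plan is to read off the statement from the crossing criterion of Proposition~\ref{proposition_across} together with the description of the faces of $P_n$ in Lemma~\ref{lemma_facets}. Fix a set $D$ of pairwise noncrossing $B$-diagonals and let $S$ be the corresponding set of arrows (vertices of $P_n$) under Definition~\ref{definition_arrow_representation}; since that representation is a bijection on vertices, the task is only to place $S$ inside a facet of $\partial P_n$. If $S$ is empty the claim is trivial, so assume $S\neq\emptyset$.

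First I would extract from the hypothesis the single structural fact that is needed. As every pair of $B$-diagonals in $D$ is noncrossing, no pair of arrows of $S$ satisfies any of the four conditions of Proposition~\ref{proposition_across}; in particular condition~(4) fails for every pair, so no arrow of $S$ has its head equal to the tail of another arrow of $S$. Since an arrow $(i,j)$ has $i\neq j$, this means that no element of $\{1,2,\ldots,n+1\}$ is simultaneously the head of some arrow of $S$ and the tail of some arrow of $S$. Letting $I$ be the set of tails and $J$ the set of heads of the arrows of $S$, the sets $I$ and $J$ are therefore disjoint, they are nonempty because $S\neq\emptyset$, and $S\subseteq I\times J$. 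By Lemma~\ref{lemma_facets} the set $\conv(I\times J)$ is a face of $P_n$ containing $S$; moving into $I$, one at a time, any elements of $\{1,2,\ldots,n+1\}$ not lying in $I\cup J$ keeps $I$ and $J$ disjoint and eventually produces a face with $I\cup J=\{1,2,\ldots,n+1\}$, that is, a facet of $P_n$ still containing $S$. Hence $S$ lies in a facet of $P_n$, as claimed.

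The argument is short and I do not foresee a real obstacle. The only point deserving a moment's attention is that, of the four ways two $B$-diagonals can cross listed in Proposition~\ref{proposition_across}, only item~(4) bears on membership in a facet, while items~(1)--(3) do not; this is also the reason the passage from noncrossing sets of $B$-diagonals to facet-contained vertex sets is one-directional rather than a literal equivalence (two crossing backward arrows, for instance, still lie in a common facet). One should likewise note that a single arrow cannot by itself violate the head/tail condition, precisely because the complete directed graph under consideration has no loops.
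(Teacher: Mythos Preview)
Your argument is correct and is exactly the intended one: the paper records the corollary as an immediate consequence of Proposition~\ref{proposition_across} (only condition~(4) is needed) together with Lemma~\ref{lemma_facets}, and you have simply spelled out those two lines. Your remark that only one direction is being claimed, and your enlargement of $I$ to reach a facet, are both appropriate and accurate.
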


\section{The type $B$ associahedron represented
as a pulling triangulation}
\label{section_triangulation}

In this section we show that the arc representation given in
Definition~\ref{definition_arrow_representation}
constitutes
Simion's type~$B$ associahedron $\Gamma_{n}^{B}$
as a pulling triangulation of the boundary of the Legendre polytope $P_{n}$.
Our main result is the following.
\begin{theorem}
\label{theorem_r_pull}
Let $<$ be any linear order on the vertex set of $P_{n}$ subject to the
following conditions:
\begin{enumerate}
\item $(x_{1},y_{1})<(x_{2},y_{2})$
whenever $x_{1}-y_{1} > 0 > x_{2}-y_{2}$.
\item On the subset of vertices $(x,y)$ satisfying $x<y$, we have 
$(x_{1},y_{1})<(x_{2},y_{2})$  whenever the interval $[x_{1},y_{1}]=\{x_{1},
  x_{1}+1,\ldots, y_{1}\}$ is contained in the interval $[x_{2},y_{2}]$.
\item On the subset of vertices $(x,y)$ satisfying $x>y$, we have 
$(x_{1},y_{1})<(x_{2},y_{2})$  whenever the interval $[y_{1},x_{1}]$ is contained in
  the interval $[y_{2},x_{2}]$. 
\end{enumerate}
Then the arc representation of $\Gamma_{n}^{B}$ given in
Definition~\ref{definition_arrow_representation} is a pulling triangulation of the boundary
of the Legendre polytope $P_{n}$ with respect to $<$.
\end{theorem}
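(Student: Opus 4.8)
The plan is to verify that the arrow representation of $\Gamma_{n}^{B}$ is exactly the pulling triangulation of $\partial P_{n}$ with respect to any order $<$ satisfying conditions (1)--(3), by exploiting the flag property established in Theorem~\ref{theorem_pull_flag}. Both $\Gamma_{n}^{B}$ (viewed via the arrow representation as a family of simplices on the vertex set of $P_{n}$) and the pulling triangulation $\triangle(\partial P_{n})$ are simplicial complexes on the same vertex set, namely the $n(n+1)$ arrows, so it suffices to show they have the same faces. Since the pulling triangulation is a flag complex, it is determined by its minimal non-faces, which all have size two. Thus the strategy reduces to proving that $\Gamma_{n}^{B}$ is also flag and that the two complexes have the same edges; equivalently, that a pair of arrows forms an edge of $\triangle(\partial P_{n})$ if and only if the corresponding $B$-diagonals are noncrossing.

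First I would record that $\Gamma_{n}^{B}$, as a boundary complex of a simplicial polytope, is the clique complex of its $1$-skeleton precisely when it is flag; but rather than appeal to that, I would argue directly. By Proposition~\ref{proposition_across} together with Lemma~\ref{lemma_facets} and its corollary, a set of $B$-diagonals is pairwise noncrossing if and only if the corresponding set of arrows lies in a common facet of $P_{n}$ — i.e.\ the heads of the arrows and the tails of the arrows never coincide. This gives that $\Gamma_{n}^{B}$ is already a subcomplex of the boundary complex of $P_{n}$ whose faces are determined pairwise, so the arrow representation is automatically a flag complex. The second step is then to identify the edges: I need to show that for a two-element subset $\{(i_1,j_1),(i_2,j_2)\}$ of arrows lying in a common face of $P_{n}$, this pair is an edge of the pulling triangulation $\triangle(\partial P_{n})$ (with respect to $<$) if and only if the associated pair of $B$-diagonals is noncrossing.

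The key case to analyze is when the two arrows span a square face of $P_{n}$, i.e.\ when $\{i_1,i_2\}\times\{j_1,j_2\}$ has four distinct "directed edges" disregarding orientation — this is exactly the situation of Lemma~\ref{lemma_square}, where one of the two diagonals of the square is an edge of the triangulation (the one through the first-pulled vertex) and the other is not. I would go through the finitely many configurations of a square — both arrows backward, both forward, one of each — and in each case compare the criterion of Lemma~\ref{lemma_square} (which diagonal contains the $<$-least of the four arrows, using conditions (1)--(3)) against the crossing criterion of Proposition~\ref{proposition_across}. For instance, if $\{(i_1,j_1),(i_2,j_2)\}$ are the "backward crossing" diagonal, condition (1) puts all four arrows in the backward block, and conditions (2)--(3) on interval containment should force the first-pulled arrow to lie on the non-crossing diagonal, so by Lemma~\ref{lemma_square} the crossing pair is not an edge; and symmetrically the noncrossing diagonal is an edge. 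One does the analogous bookkeeping for the forward-arrow squares (where "nesting" plays the role of "not crossing") and the mixed squares. When the two arrows do \emph{not} span a square (they share a vertex head-to-tail, or lie in a triangular or lower-dimensional face), the pair is an edge of $P_{n}$ itself, hence trivially an edge of any pulling triangulation, and one checks directly via Proposition~\ref{proposition_across}(4) and the triangle cases that such pairs are exactly the noncrossing ones.

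Once the edge sets agree, the argument concludes: $\triangle(\partial P_{n})$ is flag by Theorem~\ref{theorem_pull_flag}, the arrow representation of $\Gamma_{n}^{B}$ is flag with the same $1$-skeleton, hence the same minimal non-faces, hence the two complexes coincide. I expect the main obstacle to be the square-by-square case analysis of the previous paragraph: correctly translating conditions (1)--(3) of the theorem into statements about which of the four arrows of each square is $<$-minimal, and matching this precisely with the forward/backward/nesting/crossing dichotomy of Proposition~\ref{proposition_across}. The interval-containment conditions (2) and (3) are stated for pairs with a common containment relation, so a subtlety is that two diagonal arrows of a square need not be comparable under (2)--(3) alone; I would need to check that the square's four arrows always include a uniquely determined $<$-least one regardless of how $<$ refines the partial order, and that this least arrow's membership in one diagonal versus the other is forced by (1)--(3). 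Verifying that this forcing matches the noncrossing condition in every configuration is the crux of the proof.
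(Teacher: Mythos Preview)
Your approach is essentially the same as the paper's: reduce to a comparison of edges using the flag property of both complexes (Theorem~\ref{theorem_pull_flag} for the pulling triangulation, the definition of $\Gamma_n^B$ as pairwise-noncrossing sets for the other), then run a square-by-square case analysis via Lemma~\ref{lemma_square} against the crossing criteria of Proposition~\ref{proposition_across}. The paper carries out exactly this six-case check in a table, and your anticipated subtlety---that conditions (1)--(3) may fail to single out a unique $<$-least vertex of the square but must still force which diagonal it lies on---does arise in one of the six rows and is resolved just as you predict.

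One small correction: your claimed biconditional ``a set of $B$-diagonals is pairwise noncrossing if and only if the corresponding set of arrows lies in a common facet of $P_n$'' is false as stated; only the forward implication holds (this is the corollary after Proposition~\ref{proposition_across}). Two crossing backward arrows with disjoint head and tail sets lie in a common facet yet represent crossing $B$-diagonals. Fortunately you never actually use the reverse implication: the flagness of $\Gamma_n^B$ is immediate from its definition, and containment in $\partial P_n$ needs only the forward direction. Also, in your ``non-square'' paragraph, the head-to-tail case is not an edge of $P_n$ at all (such a pair lies in no common face), so your phrase ``the pair is an edge of $P_n$ itself'' applies only to the shared-head or shared-tail sub-cases; the head-to-tail pairs are non-edges on both sides, consistent with Proposition~\ref{proposition_across}(4).
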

\begin{proof}
Fix any pulling order $<$ satisfying the above conditions.
The first condition requires all backward arrows to precede all
forward arrows.
The next two conditions require that for a pair
of nested arrows of the same direction the nested arrow should precede
the nesting arrow.
\newcommand{\lift}[1]{\raisebox{2.5mm}{#1}}
\begin{table}[t]
\begin{tabular}{|c||c|c|c|}  
\hline
  Order of nodes & Arrow first pulled & Edge & Not an edge\\
\hline
\hline  
\lift{$x_{1}<x_{2}<y_{1}<y_{2}$} & \lift{$(x_{2},y_{1})$} &
\begin{dependency}[theme=simple]
\begin{deptext} $x_{1}$\& $x_{2}$ \& $y_{1}$ \& $y_{2}$\\\end{deptext}
  \depedge[arc angle=30]{1}{4}{}
  \depedge[arc angle=30]{2}{3}{}
\end{dependency}
&
\begin{dependency}[theme=simple]
\begin{deptext} $x_{1}$\& $x_{2}$ \& $y_{1}$ \& $y_{2}$\\\end{deptext}
  \depedge[arc angle=30]{1}{3}{}
  \depedge[arc angle=30]{2}{4}{}
\end{dependency}
\\
\hline
\lift{$x_{1}<y_{1}<x_{2}<y_{2}$} & \lift{$(x_{2},y_{1})$} &
\begin{dependency}[theme=simple]
\begin{deptext} $x_{1}$\& $y_{1}$ \& $x_{2}$ \& $y_{2}$\\\end{deptext}
  \depedge[arc angle=30]{1}{4}{}
  \depedge[arc angle=30]{3}{2}{}
\end{dependency}
&
\begin{dependency}[theme=simple]
\begin{deptext} $x_{1}$\& $y_{1}$ \& $x_{2}$ \& $y_{2}$\\\end{deptext}
  \depedge[arc angle=30]{1}{2}{}
  \depedge[arc angle=30]{3}{4}{}
\end{dependency}
\\
\hline
\lift{$x_{1}<y_{1}<y_{2}<x_{2}$} & \lift{$(x_{2},y_{2})$} &
\begin{dependency}[theme=simple]
\begin{deptext} $x_{1}$\& $y_{1}$ \& $y_{2}$ \& $x_{2}$\\\end{deptext}
  \depedge[arc angle=30]{1}{2}{}
  \depedge[arc angle=30]{4}{3}{}
\end{dependency}
&
\begin{dependency}[theme=simple]
\begin{deptext} $x_{1}$\& $y_{1}$ \& $y_{2}$ \& $x_{2}$\\\end{deptext}
  \depedge[arc angle=30]{1}{3}{}
  \depedge[arc angle=30]{4}{2}{}
\end{dependency}
\\
\hline
\lift{$y_{1}<x_{1}<x_{2}<y_{2}$} & \lift{$(x_{1},y_{1})$} &
\begin{dependency}[theme=simple]
\begin{deptext} $y_{1}$\& $x_{1}$ \& $x_{2}$ \& $y_{2}$\\\end{deptext}
  \depedge[arc angle=30]{2}{1}{}
  \depedge[arc angle=30]{3}{4}{}
\end{dependency}
&
\begin{dependency}[theme=simple]
\begin{deptext} $y_{1}$\& $x_{1}$ \& $x_{2}$ \& $y_{2}$\\\end{deptext}
  \depedge[arc angle=30]{2}{4}{}
  \depedge[arc angle=30]{3}{1}{}
\end{dependency}
\\
\hline
\lift{$y_{1}<x_{1}<y_{2}<x_{2}$} & \lift{$(x_{1},y_{1})$ or $(x_{2},y_{2})$}&
\begin{dependency}[theme=simple]
\begin{deptext} $y_{1}$\& $x_{1}$ \& $y_{2}$ \& $x_{2}$\\\end{deptext}
  \depedge[arc angle=30]{2}{1}{}
  \depedge[arc angle=30]{4}{3}{}
\end{dependency}
&
\begin{dependency}[theme=simple]
\begin{deptext} $y_{1}$\& $x_{1}$ \& $y_{2}$ \& $x_{2}$\\\end{deptext}
  \depedge[arc angle=30]{2}{3}{}
  \depedge[arc angle=30]{4}{1}{}
\end{dependency}
\\
\hline
\lift{$y_{1}<y_{2}<x_{1}<x_{2}$} & \lift{$(x_{1},y_{2})$} &
\begin{dependency}[theme=simple]
\begin{deptext} $y_{1}$\& $y_{2}$ \& $x_{1}$ \& $x_{2}$\\\end{deptext}
  \depedge[arc angle=30]{3}{2}{}
  \depedge[arc angle=30]{4}{1}{}
\end{dependency}
&
\begin{dependency}[theme=simple]
\begin{deptext} $y_{1}$\& $y_{2}$ \& $x_{1}$ \& $x_{2}$\\\end{deptext}
  \depedge[arc angle=30]{3}{1}{}
  \depedge[arc angle=30]{4}{2}{}
\end{dependency}
\\
\hline
\end{tabular}
\vspace*{3mm}
\caption{Pairs of arrows that are edges or minimal nonfaces}
\label{table_six_way}  
\end{table}

Recall that~$\Gamma_{n}^{B}$ is a flag complex and its
minimal nonfaces are the pairs of crossing $B$-diagonals.
By Theorem~\ref{theorem_pull_flag} the pulling triangulation we
defined is also a flag complex. It suffices to show that the minimal
nonfaces are in bijection. Equivalently,  
for any pair of arrows $\{(x_{1},y_{1}),(x_{2},y_{2})\}$ that form an edge in
the pulling triangulation of $P_{n}$,
these arrows correspond to a pair of noncrossing
$B$-diagonals in~$\Gamma_{n}^{B}$.
By Proposition~\ref{proposition_across} this amounts to showing the
following: backward arrows cannot cross, forward arrows must nest, and
for a pair of arrows of opposite direction the backward arrow cannot
cross or nest the forward arrow.

Given a four-element subset $\{x_{1},x_{2},y_{1},y_{2}\}$
of $\{1, 2, \ldots, n+1\}$,
where $x_{1} < x_{2}$ and $y_{1} < y_{2}$,
consider the four arrows
in the set $\{x_{1},x_{2}\} \times \{y_{1},y_{2}\}$.
As seen in
Lemma~\ref{lemma_square}, these four arrows form the vertex set of a
square face of the Legendre polytope $P_{n}$, and only the diagonal
which contains the first vertex to be pulled is an edge of the pulling
triangulation.
Table~\ref{table_six_way}
lists all six possible orderings of this four element set.

In each of the six cases we note which vertex is pulled first,
which diagonal of the square becomes an edge in the 
triangulation and which diagonal does not become an edge.
Note that in the fifth row
of Table~\ref{table_six_way}
we have two possibilities for selecting the vertex to be pulled first.
However, these two vertices belong to the same diagonal.
In every case
we obtain that none of the pairs of arrows with distinct heads and tails
that is an edge corresponds to a pair of crossing
$B$-diagonals. 
\end{proof}

As a corollary we obtain Simion's polytopal result.
\begin{corollary}[Simion]
The Simion type $B$ associahedron $\Gamma_{n}^{B}$ is
the boundary complex of a simplicial polytope.
\end{corollary}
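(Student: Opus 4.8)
The plan is to read the corollary off from Theorem~\ref{theorem_r_pull}. That theorem produces, for any linear order $<$ on the vertex set of the Legendre polytope $P_n$ satisfying conditions (1)--(3), a realization of $\Gamma_n^B$ as the pulling triangulation $\triangle(\partial P_n)$ of the boundary of $P_n$ with respect to $<$. Hence it suffices to show that every pulling triangulation of the boundary of a polytope is combinatorially the boundary complex of a simplicial polytope.

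The way I would establish this is to realize the pulling triangulation geometrically. Since $P_n$ is centrally symmetric, the origin lies in its relative interior, so we may dilate radially from the origin. List the vertices of $P_n$ in the pulling order as $w_1 < w_2 < \cdots < w_N$ and replace each $w_i$ by $w_i' := (1+\varepsilon_i)\, w_i$, where $\varepsilon_1 > \varepsilon_2 > \cdots > \varepsilon_N > 0$ is a sufficiently generic sequence, each term much smaller than the previous one. Performing these perturbations one at a time and applying the beyond--beneath analysis, one checks that perturbing $w_i$ destroys exactly the facets of the current polytope that contain $w_i$ and replaces them by the cones from $w_i'$ over the ridges lying simultaneously in a facet through $w_i$ and a facet avoiding $w_i$; this is precisely the step in the recursion of Definition~\ref{definition_pull}. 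An induction on $N$ (using the statement also for $\partial P_n - w_1$ and for the ridges through $w_1$, which have fewer vertices) then shows that $P_n' := \conv\{w_1',\ldots,w_N'\}$ is a simplicial polytope whose boundary complex is $\triangle(\partial P_n)$. Together with Theorem~\ref{theorem_r_pull} this exhibits $\Gamma_n^B$ as the boundary complex of the simplicial polytope $P_n'$.

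The only real work is the inductive bookkeeping in the previous paragraph: one must verify that the radial perturbations reproduce, stage by stage, the recursive definition of the pulling triangulation, and that perturbations of later vertices do not interfere with the pieces already fixed by $w_1'$ — this is where one must be careful about the order and the relative sizes of the $\varepsilon_i$. I expect this to be routine but it is nonetheless the main obstacle. Alternatively, one may bypass the explicit construction by invoking the well-known fact that pulling (and pushing) refinements of a polytope are regular, hence coherent, subdivisions, so that a pulling triangulation of the boundary of a polytope that introduces no new vertices is automatically realized as the boundary complex of a simplicial polytope; see the treatment of pulling triangulations in~\cite{Hetyei-Legendre} and the references therein. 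Either way, Theorem~\ref{theorem_r_pull} then immediately yields the corollary.
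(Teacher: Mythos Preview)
Your approach is correct and coincides with the paper's: the paper states the result as an immediate corollary of Theorem~\ref{theorem_r_pull} with no further argument, relying implicitly on the standard fact that a pulling triangulation of the boundary of a convex polytope is itself polytopal. You make this implicit step explicit by sketching the well-known radial perturbation (equivalently, regularity) argument, which is exactly the reasoning the paper is taking for granted.
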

Since the associahedron of type $A$
is the link of a $B$-diagonal
of the form~$\{i,\overline{i}\}$, we obtain the following classical result;
see the work of Haiman, Lee~\cite{Lee} and Stasheff.
For a brief history, see the introduction
of~\cite{Ceballos_Santos_Ziegler}.
\begin{corollary}
The associahedron is
the boundary complex of a simplicial polytope.
\end{corollary}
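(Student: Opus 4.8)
The plan is to deduce this corollary from the preceding one by identifying the classical (type $A$) associahedron with the link of a single diameter $B$-diagonal inside $\Gamma_n^B$, and then invoking the fact that the link of a vertex in the boundary complex of a simplicial polytope is again the boundary complex of a simplicial polytope (a vertex figure). Concretely, fix the diameter $B$-diagonal $d = \{n+1,\overline{n+1}\}$. Its link in $\Gamma_n^B$ consists of all faces $F$ of $\Gamma_n^B$ with $d \notin F$ and $F \cup \{d\}$ a face, i.e.\ all collections of pairwise noncrossing $B$-diagonals, none of which crosses $d$. The first step is therefore to check that the $B$-diagonals not crossing $d$, together with their noncrossing relation, form exactly the simplicial complex of diagonals of a convex $(n+2)$-gon with its usual noncrossing relation.

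For that identification I would argue as follows. The diameter $d$ cuts the regular $(2n+2)$-gon into two congruent halves, each a convex $(n+2)$-gon with vertex sets $\{n+1, 1, 2, \ldots, n, \overline{n+1}\}$ and its antipodal copy. A $B$-diagonal (a centrally symmetric pair of diagonals, or a diameter) fails to cross $d$ precisely when each of its two constituent diagonals lies weakly inside one of these two half-$(n+2)$-gons; by central symmetry, such a $B$-diagonal is determined by its restriction to one fixed half, and every diagonal of that $(n+2)$-gon (other than the edge $\{n+1,\overline{n+1}\}=d$ itself, and other than the two boundary edges, which are not diagonals) arises this way. This gives a bijection between the vertices of $\mathrm{link}_{\Gamma_n^B}(d)$ and the diagonals of an $(n+2)$-gon. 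Next I would check that under this bijection two $B$-diagonals are noncrossing if and only if the corresponding pair of $(n+2)$-gon diagonals is noncrossing: one direction is immediate since the half-$(n+2)$-gon sits inside the $(2n+2)$-gon; the reverse direction uses that if the two diagonals in one half do not cross, then neither do their antipodal images cross anything relevant, so the full centrally symmetric pairs are noncrossing. Hence $\mathrm{link}_{\Gamma_n^B}(d)$ is combinatorially the simplicial associahedron on an $(n+2)$-gon, i.e.\ the type $A$ associahedron $\Gamma_{n-1}$ (in the usual indexing where $\Gamma_{n-1}$ has dimension $n-2$, coming from diagonals of an $(n+2)$-gon; I would state the indexing carefully).

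The final step is purely formal: by the previous corollary, $\Gamma_n^B$ is the boundary complex of a simplicial polytope $Q$; the link of any vertex $v$ of $\partial Q$ is combinatorially the vertex figure of $Q$ at $v$, which is again the boundary complex of a simplicial polytope (one dimension lower). Applying this with $v = d$ yields that the type $A$ associahedron is a polytopal boundary complex, as claimed. The one point requiring genuine care — and the main (if modest) obstacle — is the verification that noncrossing of $B$-diagonals restricts \emph{exactly} to noncrossing of the corresponding diagonals in the half-$(n+2)$-gon, i.e.\ that no extra crossings are created or destroyed by passing between a $B$-diagonal and its half-restriction; everything else is the standard vertex-figure argument plus bookkeeping about which diameters and antipodal pairs survive. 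One may alternatively package the whole argument through the Legendre polytope picture of Theorem~\ref{theorem_r_pull}, realizing the link of the vertex $(n+1,1)$ (the arrow representing $d$) in the pulling triangulation as a pulling triangulation of a facet of $P_n$, which is a product of simplices; but the combinatorial vertex-figure argument above is the cleaner route and is what I would write up.
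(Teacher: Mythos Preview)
Your proposal is correct and follows exactly the route the paper takes: the paper's entire argument is the single sentence ``Since the associahedron of type $A$ is the link of a $B$-diagonal of the form $\{i,\overline{i}\}$, we obtain the following classical result,'' and you have simply unpacked that sentence, supplying the verification that the link of a diameter in $\Gamma_n^B$ is the noncrossing-diagonals complex of an $(n+2)$-gon and then invoking the vertex-figure construction. Your level of detail exceeds what the paper provides, but the idea is identical.
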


We end this section by describing
the structure of all facets
of the Simion's type $B$ associahedron
in terms of arrows.
\begin{theorem}
\label{theorem_arrow}
A set of arrows $S=\{(x_{1},y_{1}),\ldots,(x_{n},y_{n})\}$ represents a facet of
Simion's type~$B$ associahedron $\Gamma_{n}^{B}$
if and only if the following conditions are satisfied: 
\begin{enumerate}
\item There is exactly one $k$ satisfying $1 \leq k \leq n+1$
such that $(k-1,k)$
(or $(n+1,1)$ if $k=1$) belongs the set~$S$. We call this $k$ the {\em
type} of the facet. 
\item Backward arrows do not nest any forward arrow, in particular, they
cannot nest $(k-1,k)$ if $k>1$. 
\item If $k=1$ then there is no forward arrow in the set $S$.
\item Forward arrows must nest. In particular, if $k>1$ then for each
each forward arrow $(x,y)\in S$ must satisfy $x\leq k-1$
and $y \geq k$. (Forward arrows must nest $(k-1,k)$.) 
\item
No head of an arrow in the set $S$
is also the tail of another arrow in $S$.
\item No two arrows cross.
\end{enumerate} 
\end{theorem}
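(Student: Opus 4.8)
The plan is to deduce this characterization of facets directly from the three main tools already assembled: Lemma~\ref{lemma_facets} describing the facets of $P_n$, Lemma~\ref{lemma_diameter} (exactly one diameter per facet of $\Gamma_n^B$), and the crossing criterion of Proposition~\ref{proposition_across}. First I would recall that, by Theorem~\ref{theorem_r_pull} together with the corollary to Proposition~\ref{proposition_across}, a set of arrows $S$ represents a face of $\Gamma_n^B$ precisely when $S$ contains no crossing pair of $B$-diagonals, which by Proposition~\ref{proposition_across} is the negation of conditions (1)--(4) of that proposition. So conditions (2), (4), (5), (6) of the theorem statement are simply a restatement that $S$ contains no crossing pair: (6) handles crossing backward arrows and non-nesting forward arrows crossing, (2) handles a backward arrow nesting a forward arrow, (4) (the nesting part) handles the failure of forward arrows to nest, and (5) handles the head-equals-tail case. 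Thus for the ``only if'' direction I would observe that a facet is in particular a face, giving conditions (2), (4), (5), (6) immediately; it remains to establish (1) and (3) and to check the ``in particular'' parenthetical strengthenings.

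For condition (1): the diameters $\{k,\overline k\}$ are exactly the $B$-diagonals of item~(1) of Definition~\ref{definition_arrow_representation}, which under the arrow representation are precisely the arrows $(k-1,k)$ for $2\le k\le n+1$ and $(n+1,1)$ for $k=1$. By Lemma~\ref{lemma_diameter} every facet of $\Gamma_n^B$ contains exactly one diameter, so exactly one such arrow lies in $S$; this is (1). For condition (3): if $k=1$, the unique diameter is $(n+1,1)$, whose head is the vertex $1$ and whose tail is the vertex $n+1$. Any forward arrow $(x,y)$ with $x<y$ lies in $\{1,\dots,n+1\}$ with $x\ge 1$; if $x=1$ its tail is $1$, which is the head of $(n+1,1)$, violating (5); otherwise, since $(n+1,1)$ is a forward arrow and the two must nest by (4), we would need $x\le n+1$ and $y\ge 1$ plus the forward-nesting condition forcing the wraparound interval of $(x,y)$ to contain that of $(n+1,1)$, i.e. $\{1,\dots,n+1\}\setminus\{x+1,\dots,y\}\supseteq\{1\}$ — but the wraparound interval of $(n+1,1)$ is all of $\{1,\dots,n+1\}$ except $\{1,\dots,n+1\}=\emptyset$... here I would argue instead via Proposition~\ref{proposition_across}: a forward arrow together with the forward arrow $(n+1,1)$ either fails to nest (giving a crossing, condition (2) of Prop.~\ref{proposition_across}) or shares the vertex $1$ as a tail-meets-head (condition (4)), since $(n+1,1)$ is "maximal" — its associated arc-image is a single point, so no other forward arrow can nest inside it. Hence no forward arrow can coexist with the diameter $(n+1,1)$ in a face, which is (3). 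The ``in particular'' clauses in (2) and (4) then follow: when $k>1$, the diameter $(k-1,k)$ is itself a forward arrow, so by (2) no backward arrow nests it, and by (4) every forward arrow must nest it, forcing $x\le k-1$ and $y\ge k$.

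For the ``if'' direction, suppose $S=\{(x_1,y_1),\dots,(x_n,y_n)\}$ satisfies (1)--(6). Conditions (2), (4), (5), (6) exactly say, via Proposition~\ref{proposition_across}, that no two $B$-diagonals in $S$ cross, so $S$ is a face of $\Gamma_n^B$. It remains to check $|S|=n$ forces it to be a facet, equivalently that the $n$ arrows are affinely independent and span a facet of $P_n$. By the forest criterion recalled after Lemma~\ref{lemma_facets}, I need the $n$ edges (ignoring orientation) to form a forest on $\{1,\dots,n+1\}$, hence a spanning tree; and by Lemma~\ref{lemma_facets} the face is a facet iff the union of heads and tails is all of $\{1,\dots,n+1\}$, which a spanning tree automatically gives. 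So the crux is: conditions (1)--(6) on $n$ arrows force the underlying graph to be acyclic. Here I would argue that any cycle, viewed through the arc/arrow correspondence, would have to involve either two arrows meeting head-to-tail (excluded by (5)) or a crossing/anti-nesting pair (excluded by (2), (4), (6)); more carefully, I would use that a face of $P_n$ of the right dimension must be a facet and that $S$ being a face of the flag complex $\Gamma_n^B$ on $n(n+1)$ vertices with $n$ elements and no missing minimal non-face must be a maximal face, invoking that $\Gamma_n^B$ is pure of dimension $n-1$ (Simion's result, cited in the Preliminaries). The cleanest route is: $S$ is a face of $\Gamma_n^B$ with $n$ vertices, and since $\Gamma_n^B$ is the boundary complex of an $n$-dimensional simplicial polytope it is pure of dimension $n-1$, so $S$ is automatically a facet.

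The main obstacle I anticipate is the bookkeeping around $k=1$ and the ``wraparound'' arrows $(n+1,i)$: the case analysis in Remark~\ref{remark_arrows} and items~(1)--(3) of Definition~\ref{definition_arrow_representation} treats these as boundary cases of the general pattern, and I expect the delicate point is verifying that the strengthened ``in particular'' statements in conditions (2), (3), (4) are genuinely equivalent to (not merely implied by) the raw non-crossing conditions from Proposition~\ref{proposition_across} — in other words, that once (1) pins down the type $k$, the remaining conditions on the other $n-1$ arrows relative to the diameter are exactly the nesting constraints stated. This is mostly a matter of carefully translating ``nest'' and ``cross'' for arrows involving the special arrow $(k-1,k)$, whose arc-image is a single point, so I would handle it by the same square-face/Lemma~\ref{lemma_square} argument used in the proof of Theorem~\ref{theorem_r_pull} rather than re-deriving everything from scratch.
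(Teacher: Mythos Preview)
Your overall plan matches the paper's proof almost exactly: condition~(1) is Lemma~\ref{lemma_diameter}, conditions~(2), (4), (5), (6) are precisely the negations of the four crossing criteria in Proposition~\ref{proposition_across}, and the ``if'' direction follows from purity of $\Gamma_n^B$. So the architecture is right.

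There is, however, a genuine error in your treatment of condition~(3). You repeatedly call $(n+1,1)$ a forward arrow, but it is a \emph{backward} arrow: in the paper's convention an arrow $(x,y)$ is forward when $x<y$ and backward when $x>y$, and here $n+1>1$. This misidentification is why your argument for~(3) becomes tangled (you try to invoke the forward-nesting condition~(4), then fall back on an arc-image claim that is also off). The correct argument, which is what the paper gives, is a one-liner once the direction is right: the backward arrow $(n+1,1)$ spans the entire interval $[1,n+1]$, so it nests every forward arrow $(x,y)$ with $1\le x<y\le n+1$; by clause~(3) of Proposition~\ref{proposition_across} a backward arrow nesting a forward arrow corresponds to a crossing pair of $B$-diagonals, hence no forward arrow can sit in a face containing $(n+1,1)$.

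A smaller point: your remark that the arc-image of the diameter $\{1,\overline{1}\}$ is ``a single point'' is the wrong way round. Under $\pi$ the arc $[1,n+1]$ maps onto essentially all of $(0,n+1]$; it is the \emph{complement} that is small. This is consistent with $(n+1,1)$ being the ``largest'' backward arrow, not the ``smallest'' forward one. Once you fix the forward/backward label, conditions~(2) and~(4) handle the ``in particular'' clauses for $k>1$ exactly as you say, and condition~(3) drops out without any wraparound bookkeeping.
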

\begin{proof}
Condition~(1) is equivalent to
Lemma~\ref{lemma_diameter}.
Except for Condition~(3), the remaining conditions are stated for all
faces in Proposition~\ref{proposition_across}. To prove Condition~(3), observe
that $k=1$ implies that the backward arrow $(n+1,1)$ belongs to
$S$. This arrow would nest any forward arrow, contradicting
Condition~(3) in Proposition~\ref{proposition_across}. 
\end{proof}

\section{Triangulating Cho's decomposition}
\label{section_Cho}

The type $A$ root polytope $P_{n}^{+}$
is the convex hull of the origin and the set of points
$\{e_{i} - e_{j} \: :\: 1 \leq i < j \leq n+1\}$.
Cho~\cite{Cho} gave a decomposition
of the Legendre polytope $P_{n}$
into $n+1$ copies of $P_{n}^{+}$ as follows. The symmetric group
$\SSSS_{n+1}$ acts on the Euclidean space $\Rrr^{n+1}$ by
permuting the coordinates, that is, the permutation
$\sigma \in \SSSS_{n+1}$ sends the
basis vector $e_{i}$ into $e_{\sigma(i)}$. Hence the permutation~$\sigma$
acts on the Legendre polytope $P_{n}$ by sending each $e_{i}-e_{j}$ into
$e_{\sigma(i)}-e_{\sigma(j)}$.
Cho's main result~\cite[Theorem~16]{Cho} is
the following decomposition.
\begin{theorem}[Cho]
The Legendre polytope $P_{n}$ has the decomposition 
$$ P_{n}  =  \bigcup_{k=0}^{n} \zeta^{k}(P_{n}^{+}) $$ 
where $\zeta$ is the cycle $(1,2,\ldots,n+1)$.
Furthermore, for $0 \leq k < r \leq n$
the polytopes $\zeta^{k}(P_{n}^{+})$ and $\zeta^{r}(P_{n}^{+})$
have disjoint interiors.
\label{theorem_Cho}
\end{theorem}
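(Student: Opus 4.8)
The plan is to reduce the statement to an explicit system of linear inequalities for $P_{n}^{+}$, to rotate that system by $\zeta$, and then to read off both assertions from the rotated descriptions.

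First I would describe $P_{n}^{+}$ by inequalities. Write a point of the hyperplane $H=\{x_{1}+\cdots+x_{n+1}=0\}$ as $x=(x_{1},\ldots,x_{n+1})$ and set $T_{m}(x)=x_{1}+\cdots+x_{m}$, so $T_{0}(x)=T_{n+1}(x)=0$. Since the cross-polytope $\conv(\pm 2e_{1},\ldots,\pm 2e_{n+1})$ equals $\{x:\sum_{i}|x_{i}|\leq 2\}$ and $\sum_{i}|x_{i}|=2\sum_{i}\max(x_{i},0)$ on $H$, we have $P_{n}=\{x\in H:\sum_{i}\max(x_{i},0)\leq 1\}$. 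I claim
\[
P_{n}^{+}=\{x\in P_{n}\::\:T_{m}(x)\geq 0\ \text{for all}\ 1\leq m\leq n\}.
\]
The inclusion $\subseteq$ is a one-line check: if $x=\sum_{i<j}c_{ij}(e_{i}-e_{j})$ with $c_{ij}\geq 0$ and $\sum_{i<j}c_{ij}\leq 1$, then $T_{m}(x)=\sum_{i\leq m<j}c_{ij}\geq 0$. For $\supseteq$, I would use a left-to-right greedy routing: view each coordinate $\ell$ with $x_{\ell}>0$ as a supply of $x_{\ell}$ and each $\ell$ with $x_{\ell}<0$ as a demand of $-x_{\ell}$, sweep $\ell=1,2,\ldots,n+1$ while maintaining a pool of so-far-unmatched supply together with the coordinates it came from, and whenever $x_{\ell}<0$ satisfy the demand out of the pool, recording each transfer from a coordinate $i$ to $\ell$ as a contribution to $c_{i\ell}$. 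The running pool has size $T_{\ell}(x)$ after step $\ell$, so the hypothesis $T_{m}(x)\geq 0$ guarantees that it never runs dry and that every created $c_{i\ell}$ has $i<\ell$; the total mass moved is $\sum_{\ell}\max(x_{\ell},0)\leq 1$ because $x\in P_{n}$. Hence $x=\sum_{i<j}c_{ij}(e_{i}-e_{j})$ exhibits $x\in P_{n}^{+}$.

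Next I would rotate. Because $\zeta=(1,2,\ldots,n+1)$ acts on coordinates by $(\zeta^{k}y)_{m}=y_{m-k}$, with subscripts reduced modulo $n+1$ into $\{1,\ldots,n+1\}$, the description above yields, for $0\leq k\leq n$,
\[
\zeta^{k}(P_{n}^{+})=\{y\in P_{n}\::\:y_{k+1}+y_{k+2}+\cdots+y_{k+m}\geq 0\ \text{for all}\ 1\leq m\leq n\},
\]
again with cyclic subscripts. Writing $\widetilde{T}_{j}(y)=y_{1}+\cdots+y_{j}$ for $0\leq j\leq n+1$, so $\widetilde{T}_{0}(y)=\widetilde{T}_{n+1}(y)=0$, each of these inequalities reads $\widetilde{T}_{k+m}(y)\geq\widetilde{T}_{k}(y)$, and as $m$ runs through $1,\ldots,n$ the index $k+m$ runs through $\{0,1,\ldots,n\}\setminus\{k\}$; hence $y\in\zeta^{k}(P_{n}^{+})$ if and only if $\widetilde{T}_{k}(y)=\min\{\widetilde{T}_{0}(y),\ldots,\widetilde{T}_{n}(y)\}$. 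Now both claims follow. Each $\zeta^{k}(P_{n}^{+})$ lies in $P_{n}$ since the origin and every $e_{i}-e_{j}$ do and $P_{n}$ is convex and invariant under permuting coordinates; and every $y\in P_{n}$ lies in $\zeta^{k}(P_{n}^{+})$ for any $k$ achieving $\min_{0\leq j\leq n}\widetilde{T}_{j}(y)$, so $P_{n}=\bigcup_{k=0}^{n}\zeta^{k}(P_{n}^{+})$. For the disjointness of interiors, take $0\leq k<r\leq n$: a point $y\in\zeta^{k}(P_{n}^{+})\cap\zeta^{r}(P_{n}^{+})$ has $\widetilde{T}_{k}(y)=\widetilde{T}_{r}(y)$, that is, $y_{k+1}+\cdots+y_{r}=0$; but the functional $y\mapsto y_{k+1}+\cdots+y_{r}$ is one of the inequalities defining $\zeta^{k}(P_{n}^{+})$ (here $1\leq r-k\leq n$) and it is not constant on the $n$-dimensional polytope $\zeta^{k}(P_{n}^{+})$, so its zero set meets $\zeta^{k}(P_{n}^{+})$ in a proper face, which has empty interior. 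Thus $\zeta^{k}(P_{n}^{+})$ and $\zeta^{r}(P_{n}^{+})$ have disjoint interiors.

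The only real work is the inclusion $\supseteq$ in the description of $P_{n}^{+}$, namely actually producing the convex combination with weights summing to at most $1$. The two ingredients are perfectly matched to the two hypotheses --- non-negativity of the partial sums $T_{m}$ is exactly the feasibility condition for routing supply forward, and the cross-polytope bound $\sum_{i}\max(x_{i},0)\leq 1$ is exactly what caps the routed mass --- so once this is recognized the remainder is bookkeeping. A secondary point that deserves care is the cyclic bookkeeping in the rotation step, namely checking that the $n$ rotated inequalities together are equivalent to the statement that the partial-sum sequence $\widetilde{T}_{0},\ldots,\widetilde{T}_{n}$ attains its minimum at index $k$.
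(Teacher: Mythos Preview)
Your argument is correct. Note, however, that the paper does not supply its own proof of this statement: it is quoted as Cho's main result and attributed to \cite[Theorem~16]{Cho} without proof, so there is nothing in the paper to compare your approach against.

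That said, your proof is clean and self-contained. The key idea---characterizing $P_{n}^{+}$ inside $P_{n}$ by nonnegativity of all initial partial sums $T_{m}$, and then recognizing that $\zeta^{k}(P_{n}^{+})$ is exactly the locus where the partial-sum sequence $\widetilde{T}_{0},\ldots,\widetilde{T}_{n}$ attains its minimum at index $k$---immediately yields both assertions: the covering holds because every finite sequence has a minimum, and interiors are disjoint because two distinct minimizers force a defining inequality to hold with equality. The greedy routing for the inclusion $\supseteq$ is the standard transportation argument, and you correctly identify that the two hypotheses (nonnegative partial sums; total positive mass at most $1$) are precisely what make it work.

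One minor stylistic remark on the disjoint-interiors step: rather than arguing that the hyperplane $y_{k+1}+\cdots+y_{r}=0$ meets $\zeta^{k}(P_{n}^{+})$ in a proper face, you can say directly that any relative interior point of $\zeta^{k}(P_{n}^{+})$ satisfies all $n$ partial-sum inequalities strictly (since the polytope is full-dimensional in $H$), so $k$ is the \emph{unique} minimizer of $\widetilde{T}_{0},\ldots,\widetilde{T}_{n}$ there, and the point cannot lie in $\zeta^{r}(P_{n}^{+})$ for any $r\neq k$. This avoids having to check separately that the functional is nonconstant.
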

In this section we show that each copy $\zeta^{k}(P_{n}^{+})$ of
$P_{n}^{+}$ is the union of simplices of the triangulation given in
Definition~\ref{definition_arrow_representation}, representing the boundary complex
$\Gamma_{n}^{B}$ of Simion's type $B$ associahedron.  

\begin{theorem}
Every facet $F$ of the arc representation of $\Gamma_{n}^{B}$ given in
Definition~\ref{definition_arrow_representation} is contained in
$\zeta^{k-1}(P_{n}^{+})$ 
where $k$ is the unique arrow of the form $(k-1,k)$ in $F$
or $(n+1,1)$ if $k=1$.
Equivalently, the facet $F$ is contained in $\zeta^{k}(P_{n}^{+})$
exactly when it represents a facet of $\Gamma_{n}^{B}$
that contains the diagonal~$\{k,\overline{k}\}$.
\end{theorem}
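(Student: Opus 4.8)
The plan is to describe the vertices of the rotated copy $\zeta^{k-1}(P_n^+)$ explicitly as a set of arrows, and then to check, using the description of facets in Theorem~\ref{theorem_arrow}, that every arrow appearing in a facet of type~$k$ is such a vertex. Since $\zeta^{k-1}(P_n^+)$ is convex and $F=\conv(\{\text{the arrows of }F\})$ introduces no new vertices, this immediately gives $F\subseteq\zeta^{k-1}(P_n^+)$.

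First I would record which arrows are vertices of $\zeta^{m}(P_n^+)$. The non-origin vertices of $P_n^+$ are exactly the points $e_i-e_j$ with $1\le i<j\le n+1$, i.e.\ the arrows $(c,d)$ with $c>d$. Since $\zeta$ acts linearly by sending $(c,d)$ to $(\zeta(c),\zeta(d))$, an arrow $(c,d)$ is a vertex of $\zeta^{m}(P_n^+)$ precisely when $\zeta^{-m}(c)>\zeta^{-m}(d)$, and unwinding the cyclic shift $\zeta^{-m}$ this says exactly that the head $d$ precedes the tail $c$ in the cyclic order $m+1\prec m+2\prec\cdots\prec n+1\prec 1\prec\cdots\prec m$. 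Taking $m=k-1$, it therefore suffices to prove: for every arrow $(x,y)\in F$ the head $y$ precedes the tail $x$ in the order $\prec$ whose least element is $k$, namely $k\prec k+1\prec\cdots\prec n+1\prec 1\prec\cdots\prec k-1$.

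Next I would verify this arrow by arrow. For the unique diameter arrow $(k-1,k)$ of $F$ (or $(n+1,1)$ when $k=1$) the head is the $\prec$-least element and the tail the $\prec$-greatest, so the claim is immediate. For an ascending arrow $(x,y)\in F$, conditions~(3) and~(4) of Theorem~\ref{theorem_arrow} force $x\le k-1<k\le y$ (and there are no ascending arrows at all when $k=1$); hence the head $y$ lies in the initial block $\{k,\dots,n+1\}$ of $\prec$ and the tail $x$ in the terminal block $\{1,\dots,k-1\}$, so again $y\prec x$. This reduces everything to the descending arrows, which I expect to be the main obstacle, since here one has to translate the combinatorial non-crossing conditions into the order statement. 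For a descending arrow $(x,y)\in F$ I would argue by contradiction: if $y$ does not precede $x$ in $\prec$ then, as $x>y$, we must have $y\le k-1$ and $x\ge k$. If $k=1$ this is impossible since $y\ge1$; so assume $k\ge2$, whence $(k-1,k)\in F$, and since $F$ contains only this one diameter arrow (Theorem~\ref{theorem_arrow}(1)) we get $(x,y)\ne(n+1,1)$. Now compare the arcs on $\Rrr/(n+1)\Zzz$ attached to the two arrows in Remark~\ref{remark_arrows}: the arc of $(x,y)$ is the interval $[y,x]$, which contains the closed sub-arc from $k-1$ to $k$ and hence also a point of the open arc between $k-1$ and $k$, while the arc of $(k-1,k)$ is the complementary long arc (the circle with the open arc from $k-1$ to $k$ removed), of length $n$. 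Therefore neither arc contains the other — the first meets the open gap of the second, and the second is longer than $[y,x]$ because $(x,y)\ne(n+1,1)$ — and the two arcs are not disjoint, both containing $y$. By Proposition~\ref{proposition_crossing_pi} the $B$-diagonals represented by $(x,y)$ and $(k-1,k)$ cross, contradicting that both lie in the facet~$F$.

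Finally, the reformulation in the second sentence follows from the bijection of Definition~\ref{definition_arrow_representation}, which matches the type $k$ of $F$ with the unique diameter $\{k,\overline{k}\}$ contained in $F$ (cf.\ Lemma~\ref{lemma_diameter} and Theorem~\ref{theorem_arrow}(1)), together with the disjoint-interiors part of Cho's Theorem~\ref{theorem_Cho}, which shows $F$ can lie in no other copy $\zeta^{m}(P_n^+)$. As noted, the crux is the descending-arrow case; the same contradiction can alternatively be obtained purely combinatorially from conditions~(2) and~(5) of Theorem~\ref{theorem_arrow}, since a backward arrow nesting $(k-1,k)$ would require $y\le k-2$ and $x\ge k+1$, whereas $y=k-1$ or $x=k$ would equate a head with a tail.
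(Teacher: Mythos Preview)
Your proof is correct but takes a genuinely different route from the paper's. The paper argues via symmetry: it first observes that type-$1$ facets consist only of backward arrows (Theorem~\ref{theorem_arrow}(3)) and hence lie in $P_{n}^{+}$; it then notes that the action of $\zeta$ on arrows is a shift of indices modulo $n+1$, which on the arc representation is a rotation, and uses the rotation-invariance of the noncrossing conditions of Theorem~\ref{theorem_across} to transport the type-$1$ case to type~$k$, carrying $P_{n}^{+}$ to $\zeta^{k-1}(P_{n}^{+})$. Your approach instead characterizes the vertices of $\zeta^{k-1}(P_{n}^{+})$ directly via the cyclic order $\prec$ and checks each arrow of $F$ case by case. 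The symmetry argument is shorter and foregrounds the rotation structure the section is about; your direct verification is more self-contained and, as you note at the end, can be made to bypass the arc machinery entirely, since the combinatorial alternative using Theorem~\ref{theorem_arrow}(2) and~(5) for backward arrows is both cleaner and already sufficient. One small point on the ``equivalently'' clause: you do not need Cho's disjoint-interiors statement for the converse. Your own vertex description already shows that the diameter arrow $(k-1,k)$ (respectively $(n+1,1)$) satisfies $y\prec x$ for the order $\prec$ based at $m$ only when $m=k$, so $F$ can lie in no other copy $\zeta^{m-1}(P_{n}^{+})$.
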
  
\begin{proof}
The polytope $P_{n}^{+}$ is the convex hull the origin and of all vertices that are
represented by backward arrows. The facets of type $1$ (as defined in
Theorem~\ref{theorem_arrow}) form a pulling triangulation of the part of the
boundary of $P_{n}^{+}$ that does not contain the origin. In fact, the
restriction of the pulling order to the backward arrows may be taken in the
{\em revlex order}, as defined in \cite[Definition~4.5]{Hetyei-Legendre}, giving
rise to the {\em standard triangulation} described
in~\cite{Gelfand-Graev-Postnikov}. A facet in our triangulation of $P_{n}$
belongs to the standard triangulation of the boundary of the part of
$P_{n}^{+}$ not containing the origin exactly when it has type $1$.

Observe next that the effect of $\zeta$ on the arrows (considered as
vertices of $P_{n}$) is adding $1$ modulo $n+1$ to the head and to the
tail of each arrow. Taking into account
Definition~\ref{definition_arrow_representation}
and
Remark~\ref{remark_arrows}, it is not difficult to see that the induced effect
on the arc representation is adding $1$ modulo $n+1$, that is, a
rotation. It is worth noting that to rotate each vertex of a regular
$(2n+2)$-gon into itself requires increasing each index by one $2n+2$
times. However, a centrally symmetric pair of arcs $\{[u, v-1], [u+n+1,
  v+n]\}$ is taken into itself already by $n+1$ such elementary
rotations. In the arc representation the facets containing only backward
arrows corresponding to facets containing the pair of arcs
$\{[1,n+1],[n+2,2n+2]\}$. The induced action of $\zeta^{k-1}$ takes this
pair into $\{[k,n+k],[n+k+1,2n+1+k]\}$. The conditions stated in
Theorem~\ref{theorem_across} are rotation-invariant, so the induced action
of $\zeta^{k-1}$ takes the family of all facets containing
$\{[1,n+1],[n+2,2n+2]\}$ into the family of all facets containing 
$\{[k,n+k],[n+k+1,2n+1+k]\}$.
\end{proof}

\section{A bijection between the faces of $\Gamma_{n}^{B}$ and Delannoy paths}
\label{section_bijection}

In Simion's original paper, the nontrivial computation of
the $f$-vector of the type $B$ associahedron~$\Gamma_{n}^{B}$ used the
recursive structure of the $B$-diagonals.
Simion asked if there is a direct way to obtain this
$f$-vector. Due to the unimodularity of the Legendre polytope
$P_{n}$, all pulling triangulations of the boundary give the same
$f$-vector.  A direct enumeration in the case of the lexicographic
pulling order, also known as the anti-standard triangulation,
is straightforward and well-known; see~\cite{Ardila,Hetyei-Legendre}.

In this section we establish a bijection between
the faces of $\Gamma_{n}^{B}$ and
balanced Delannoy paths of length $2n$ in such a way that
$(k-1)$-dimensional faces correspond to Delannoy paths with $k$
up steps, thus answering Simion's question.

\begin{definition}
A {\em balanced Delannoy path} of length $2n$ is a lattice path starting at $(0,0)$,
ending at $(2n,0)$
and using only steps of the following three types:
up steps $(1,1)$,
down steps $(1,-1)$
and horizontal steps $(2,0)$. 
A {\em Schr\"oder path} is a balanced Delannoy path that
never goes below the horizontal axis.   
\label{definition_Delannoy_Schroder}
\end{definition}
More generally, a Delannoy path is a path taking the steps
$(1,1)$, $(1,-1)$ and $(2,0)$ with no ending condition.
In this paper we will only work with balanced Delannoy paths.

Denote the up, down and horizontal steps
by $U$, $D$ and $H$, respectively.
We say that the {\em length} of the
letters $U$ and $D$ is $1$, whereas the length of the letter $H$ is $2$.
A Delannoy path is uniquely encoded by a word
in these three letters.
We call the associated word a Delannoy word.
A balanced Delannoy path of length $2n$
corresponds to a word
in which the number $k$ of the occurrences of $U$ is the
same as the number of occurrences of $D$,
and the number of occurrences of $H$ is $n-k$.
We call such a word {\em balanced}.
Hence the number of Delannoy paths of length $n$ with $k$ up steps is
given by the multinomial coefficient
$\binom{n+k}{k, k, n-k} = \binom{n+k}{k}\binom{n}{k}$
which is
the same as the number of $(k-1)$-dimensional faces in any pulling
triangulation of the boundary of~$P_{n}$;
see Lemma~\ref{lemma_any_pull}.

We represent the faces of $\Gamma_{n}^{B}$ as
a digraph on
the set of nodes $\{1, 2, \ldots,n+1\}$.
A contrapositive formulation of 
Proposition~\ref{proposition_across} is the following.
\begin{proposition}
A digraph represents a face of
Simion's type~$B$ associahedron $\Gamma_{n}^{B}$
exactly when the following conditions are satisfied:
\begin{enumerate}[(1)]
\item There are no crossings between arrows.
\item Forward arrows nest.
\item A backward arrow cannot nest a forward arrow.
\item No head of an arrow is the tail of another arrow.  
\end{enumerate}  
\label{proposition_contrapositive}
\end{proposition}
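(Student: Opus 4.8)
The plan is to derive Proposition~\ref{proposition_contrapositive} directly from the flag property of $\Gamma_{n}^{B}$ together with the arrow bijection and Proposition~\ref{proposition_across}, so that essentially the only work is a careful repackaging of the four crossing conditions. First I would recall that a digraph on $\{1,2,\ldots,n+1\}$ is nothing but a set of arrows, and that Definition~\ref{definition_arrow_representation} gives a bijection between arrows and $B$-diagonals; hence a digraph represents a set $\mathcal{D}$ of $B$-diagonals, and it represents a \emph{face} of $\Gamma_{n}^{B}$ exactly when $\mathcal{D}$ is a face. Since $\Gamma_{n}^{B}$ is a flag complex whose minimal nonfaces are precisely the pairs of crossing $B$-diagonals (as used in the proof of Theorem~\ref{theorem_r_pull}), the set $\mathcal{D}$ is a face if and only if no two of its $B$-diagonals cross. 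Therefore the proposition reduces to the statement that a digraph has the property ``every pair of its arrows represents a pair of noncrossing $B$-diagonals'' if and only if conditions (1)--(4) hold.

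Next I would carry out the bookkeeping, applying Proposition~\ref{proposition_across} to each pair of arrows. That proposition says a pair of $B$-diagonals crosses exactly when one of its own conditions (1)--(4) holds: two backward arrows that cross, two forward arrows that fail to nest, a backward arrow nesting or crossing a forward arrow, or an arrow whose head is the tail of another. Negating over all pairs: for the implication $(\Rightarrow)$, ``no head is a tail of another arrow'' is immediate from negating condition~(4) of Proposition~\ref{proposition_across}, giving condition~(4) here; negating its condition~(2) gives ``forward arrows nest,'' condition~(2) here; negating its condition~(3) gives in particular ``no backward arrow nests a forward arrow,'' condition~(3) here; and finally the impossibility of a crossing pair of backward arrows (negation of condition~(1)), of a crossing forward/backward pair (negation of condition~(3)), and of a crossing pair of forward arrows (which would contradict nesting) together amount to ``there are no crossings between arrows,'' condition~(1) here. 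For the converse $(\Leftarrow)$, I would check that conditions (1)--(4) here rule out each of the four crossing situations of Proposition~\ref{proposition_across} for an arbitrary pair: a crossing pair of backward arrows or a crossing forward/backward pair is excluded by~(1), a nonnesting pair of forward arrows by~(2), a backward arrow nesting a forward arrow by~(3), and a head-equals-tail coincidence by~(4); hence every pair of arrows is noncrossing.

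The step I expect to require the most care is the consolidation in the last sentence of the $(\Rightarrow)$ direction: condition~(1) here, ``no crossings,'' is absorbing three of the crossing subcases of Proposition~\ref{proposition_across} at once (backward/backward, forward/backward, and the forward/forward case that is already incompatible with nesting), so I want to be explicit that nesting and crossing of two arrows are mutually exclusive and that a shared head/tail is treated as a \emph{separate} incidence (condition~(4)) rather than as a crossing, exactly as in the proof of Proposition~\ref{proposition_across}. Once that is pinned down, no further computation is needed, and the proposition follows.
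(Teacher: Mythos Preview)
Your proposal is correct and matches the paper's approach: the paper introduces this proposition simply as ``a contrapositive formulation of Proposition~\ref{proposition_across}'' and gives no further proof, so your careful unpacking of the negation---in particular the observation that condition~(1) here absorbs the backward/backward and backward/forward crossing subcases while the forward/forward crossing subcase is already ruled out by nesting---is exactly the bookkeeping that the paper leaves implicit.
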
  
See also Table~\ref{table_six_way} 
occurring in the proof of
Theorem~\ref{theorem_r_pull}.
Call such a set of arrows a {\em valid} digraph.
An example of a valid digraph is shown in Figure~\ref{figure_valid}.

\newcommand{\backwardarrow}[2]
{\node[anchor=east] at #1 (text) {};
\node[anchor=west] at #2 (description) {};
\draw[line width=0.3mm] (description) edge[out=135,in=45,->] (text);}

\newcommand{\forwardarrow}[2]
{\node[anchor=east] at #1 (text) {};
\node[anchor=west] at #2 (description) {};
\draw[line width=0.3mm] (description) edge[out=135,in=45,<-] (text);}

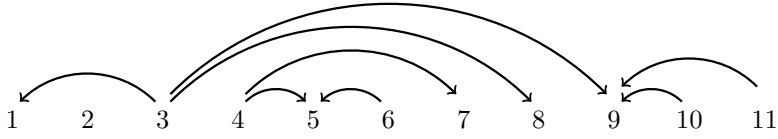
\begin{figure}[t]
\begin{tikzpicture}
\foreach \x in {1,...,11}
\node at (\x,0) {\small $\x$};

\backwardarrow{(1+0.1,0.1)}{(3-0.1,0.1)}
\backwardarrow{(5+0.1,0.1)}{(6-0.1,0.1)}
\backwardarrow{(9+0.1,0.1)}{(10-0.1,0.1)}
\backwardarrow{(9+0.1,0.3)}{(11-0.1,0.3)}

\forwardarrow{(3+0.1,0.1)}{(8-0.1,0.1)}
\forwardarrow{(3+0.1,0.2)}{(9-0.1,0.2)}
\forwardarrow{(4+0.1,0.1)}{(5-0.1,0.1)}
\forwardarrow{(4+0.1,0.2)}{(7-0.1,0.2)}

\end{tikzpicture}
\caption{A valid digraph on $11$ nodes.}
\label{figure_valid}
\end{figure}
For $W$ a subset of~$V$ let $A_{W}$ denote the
induced subgraph on $W$.
Let $\alpha \cdot \beta$ denote the concatenation of
the two words $\alpha$ and $\beta$.

Let $A$ be a valid digraph of backward arrows
on a non-empty finite set of nodes $V \subseteq \Ppp$
and let $v$ be the minimal element of $V$. 
We define the Schr\"oder word $\SP(A)$ by the following recursive definition.
\begin{itemize}
\item[(i)] If the set $V$ only consists of one node,
that is, $V = \{v\}$, let $\SP(A)$ be the empty word~$\epsilon$.
\item[(ii)]
If the minimal node $v$ is an isolated node of $A$, let 
$\SP(A)$ be the concatenation
$H \cdot \SP(A_{V-\{v\}})$.
\item[(iii)]
Lastly, when the node $v$ is not isolated then since $v$ is the
least element, there is necessarily a backward
arrow $(x,v)$ going into $v$. Let $w$ the smallest node
with an arrow to $v$, that is,
$w = \min(\{x \in V \: : \: (x,v) \in A\})$.
Define $\SP(A)$ by
$$    
\SP(A)
=
U \cdot
\SP(A_{V \cap (v,w]})
\cdot D \cdot
\SP(A_{V - (v,w]}) .
$$
\end{itemize}

\begin{lemma}
Let $A$ be a valid digraph on a set of nodes of size $n+1$
consisting of exactly $k$ backward arrows.
The Schr\"oder word $\SP(A)$ then has length $2 \cdot n$
and has exactly $k$ copies of the letters $U$ and $k$ copies of the letters $D$.
\label{lemma_easy_observation}
\end{lemma}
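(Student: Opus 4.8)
The plan is to induct on the size $n+1$ of the node set $V$. The base case is clause~(i): when $|V| = 1$ there are no backward arrows, so $k = 0$, and $\SP(A) = \epsilon$ has length $0 = 2 \cdot 0$ with zero copies of $U$ and $D$, as required. For the inductive step assume the statement holds for all valid digraphs on fewer than $n+1$ nodes, and let $A$ be a valid digraph on $|V| = n+1$ nodes with exactly $k$ backward arrows, $v = \min(V)$.

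The argument splits according to which recursive clause applies. In clause~(ii), $v$ is isolated, so all $k$ arrows of $A$ lie in $A_{V - \{v\}}$, a valid digraph on $n$ nodes; by the induction hypothesis $\SP(A_{V-\{v\}})$ has length $2(n-1)$ with $k$ copies each of $U$ and $D$, hence $\SP(A) = H \cdot \SP(A_{V-\{v\}})$ has length $2 + 2(n-1) = 2n$ with the same $k$ copies of $U$ and $D$ (prepending $H$ changes neither count). In clause~(iii), write $V_{1} = V \cap (v,w]$ and $V_{2} = V - (v,w]$; since $w \in V_{1}$ and $v \in V_{2}$, both are nonempty, they partition $V$, and I must check that every arrow of $A$ lies entirely within $V_{1}$ or entirely within $V_{2}$ so that no arrow is lost when passing to the induced subgraphs. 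This is the crux of the proof: I claim no arrow of $A$ has one endpoint in $V_{1}$ and the other in $V_{2}$. Suppose $(a,b)$ is such an arrow. Since $A$ consists only of backward arrows, $b < a$. If the arrow straddles the interval $(v,w]$, then because $v = \min V$ is not an endpoint of any arrow other than those entering it, and $w$ is the smallest node with an arrow into $v$, one checks by a short case analysis that $(a,b)$ would either cross the arrow $(w,v)$ or nest it, or would have $b \le v$ forcing $b = v$ and then $a \ge w$ contradicts minimality of $w$ — in every case violating validity (Proposition~\ref{proposition_contrapositive}). Hence $A_{V_{1}}$ and $A_{V_{2}}$ together contain all $k$ backward arrows of $A$ except the single arrow $(w,v)$ into $v$, which is accounted for by the explicit letters $U$ and $D$ in the definition of $\SP(A)$.

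Now let $A_{V_{1}}$ have $k_{1}$ backward arrows and $A_{V_{2}}$ have $k_{2}$, so $k_{1} + k_{2} = k - 1$. Writing $|V_{1}| = m_{1}$ and $|V_{2}| = m_{2}$, we have $m_{1} + m_{2} = n + 1$ and $1 \le m_{i} \le n$, so the induction hypothesis applies to both: $\SP(A_{V_{1}})$ has length $2(m_{1}-1)$ with $k_{1}$ copies each of $U, D$, and $\SP(A_{V_{2}})$ has length $2(m_{2}-1)$ with $k_{2}$ copies each. Therefore $\SP(A) = U \cdot \SP(A_{V_{1}}) \cdot D \cdot \SP(A_{V_{2}})$ has length
$$
1 + 2(m_{1}-1) + 1 + 2(m_{2}-1) = 2(m_{1}+m_{2}) - 2 = 2(n+1) - 2 = 2n,
$$
and has $1 + k_{1} + k_{2} = k$ copies of $U$ and likewise $k$ copies of $D$. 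This completes the induction. I expect the main obstacle to be the case analysis in clause~(iii) verifying that no backward arrow straddles the interval $(v,w]$; this is where the validity conditions (no crossings, no nesting of certain types, no head equal to another tail) and the minimality of both $v$ and $w$ must be used carefully.
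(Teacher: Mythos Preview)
Your proposal is correct and follows essentially the same inductive approach as the paper, which is even terser: the paper simply notes that $V\cap(v,w]$ and $V-(v,w]$ partition $V$ (for the length count) and that the noncrossing property forces every arrow to be either $(w,v)$ or to lie in one of the two induced subgraphs (for the $U$/$D$ count).

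One small correction to your case-analysis sketch in clause~(iii): nesting between backward arrows does \emph{not} violate validity (Proposition~\ref{proposition_contrapositive} only forbids a backward arrow nesting a forward one), so ``nest it'' is not an obstruction.  Fortunately the nesting case does not actually arise for a straddling arrow: if $(a,b)$ has one endpoint in each part and $(a,b)\neq(w,v)$, then either $b=v$ and $a<w$ (contradicting minimality of $w$, not $a\ge w$ as you wrote), or $b=w$ (head-equals-tail violation), or $v<b<w<a$ (crossing $(w,v)$).  With these small fixes your argument goes through.
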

\begin{proof}
Both statements follow by induction on $n$.   
The first statement follows from the fact that
the two sets of nodes
$V \cap (v,w]$ and $V - (v,w]$
are complements of each other.
The second statement follows
by observing the noncrossing property ensures each arrow in $A$
is either the arrow $(w,v)$,
an arrow in $A_{V \cap (v,w]}$,
or an arrow in $A_{V - (v,w]}$.
\end{proof}

\begin{proposition}
The map $\SP$ is a bijection between
the set of all valid digraphs consisting only of backward arrows
on the set of $n+1$ nodes and all Schr\"oder words of length $2n$. 
\label{proposition_backwards}
\end{proposition}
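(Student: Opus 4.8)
The plan is to prove that $\SP$ is a bijection by exhibiting an explicit inverse, proceeding by induction on $n$. Lemma~\ref{lemma_easy_observation} already guarantees that $\SP$ sends a valid digraph of backward arrows on $n+1$ nodes to a Schr\"oder word of length $2n$, so it remains to show both injectivity and surjectivity; constructing a two-sided inverse handles both at once.

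First I would set up the inductive decomposition of a Schr\"oder word $\omega$ of length $2n$. If $\omega = \epsilon$ then $n = 0$ and the digraph is the single isolated node. If $\omega$ begins with $H$, write $\omega = H \cdot \omega'$ where $\omega'$ is a Schr\"oder word of length $2(n-1)$; by induction $\omega'$ is $\SP$ of a unique valid backward digraph on $n$ nodes, which we place on the node set $V - \{v\}$ (with $v = \min V$), declaring $v$ isolated. If $\omega$ begins with $U$, then since $\omega$ is a Schr\"oder path there is a well-defined first return to the axis: $\omega = U \cdot \omega_1 \cdot D \cdot \omega_2$ where $\omega_1$ and $\omega_2$ are themselves Schr\"oder words and $U\cdot\omega_1\cdot D$ is the initial arch. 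Set $m$ to be the length of $\omega_1$; here the key arithmetic point is that $\omega_1$ has length $2 j$ for some $j$, so it corresponds to a valid digraph on $j+1$ nodes, and we assign these to be $V \cap (v,w]$ where $w$ is chosen so that $|V \cap (v,w]| = j+1$ --- this forces $w$ uniquely. Then $\omega_2$ has length $2(n-j-1)$ and corresponds to a valid digraph on the remaining $n-j$ nodes $V - (v,w]$ (which contains $v$ if and only if... no: $v \notin (v,w]$, so $v \in V-(v,w]$; one must be slightly careful here and recheck case~(iii), where $\SP(A_{V-(v,w]})$ is applied to a set that \emph{does} contain $v$ --- so the recursion in case~(iii) is on $A_{V-(v,w]}$ which still has minimal node $v$). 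I would re-examine this: in fact $v$ is isolated in $A_{V-(v,w]}$ because the only arrow at $v$ in $A$ goes to $w \in (v,w]$, so the recursion on $A_{V-(v,w]}$ will immediately hit case~(ii) or case~(i). This means the correct reading of the arch decomposition is that the first letter $U$ together with the arrow $(w,v)$, and $\omega_2$ must be reparsed accordingly. I would state the inverse so that from $\omega = U \cdot \omega_1 \cdot D \cdot \omega_2$ one reconstructs the arrow $(w,v)$, the sub-digraph on $V \cap (v,w]$ from $\omega_1$, and the sub-digraph on $V - (v,w]$ (a set with isolated minimum $v$) from $H^{-1}$-type reasoning, i.e.\ from $\omega_2$ directly via induction.

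After defining the candidate inverse $\Phi$, the verification splits into checking $\Phi$ lands in the set of \emph{valid} backward digraphs and checking $\SP \circ \Phi = \mathrm{id}$ and $\Phi \circ \SP = \mathrm{id}$. The first point is where I would invoke Proposition~\ref{proposition_contrapositive}: a digraph of backward arrows is valid precisely when it has no crossings and no head-equals-tail; I would argue inductively that the arrow $(w,v)$ cannot cross any arrow of $A_{V\cap(v,w]}$ (all such arrows lie within $[v,w]$) nor any arrow of $A_{V-(v,w]}$ (all such arrows lie within a node set disjoint from $(v,w)$, and since $v$ is isolated there, none touch $v$), and that no head/tail conflict is introduced for the same reason; the two inductively-valid pieces contribute no conflicts among themselves. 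The identities $\SP\circ\Phi = \mathrm{id}$ and $\Phi\circ\SP=\mathrm{id}$ then follow by matching the three cases (i)/(ii)/(iii) of the definition of $\SP$ with the three cases ($\epsilon$, leading $H$, leading $U$) in the definition of $\Phi$, using the inductive hypothesis and the uniqueness of the first-return arch decomposition of a Schr\"oder path.

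The main obstacle I anticipate is the bookkeeping around case~(iii): correctly identifying which node set receives which sub-word, and in particular confirming that the minimum node $v$ behaves as an isolated node inside the recursive call $\SP(A_{V-(v,w]})$ so that the arch decomposition of the image word is genuinely the first-return decomposition. Once that is pinned down --- essentially, that the structure ``$v$ with its unique out-arrow to $w$'' corresponds bijectively to the initial arch $U\cdots D$ and that everything to the right nests or sits outside correctly --- the rest is a routine induction. A secondary, purely numerical check is that the length accounting ($2j$ for $\omega_1$, hence $j+1$ nodes, determining $w$ uniquely via $|V\cap(v,w]| = j+1$) works out, which it does by Lemma~\ref{lemma_easy_observation} applied inductively.
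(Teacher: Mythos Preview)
Your approach is essentially the paper's: construct the inverse recursively using the first-return (arch) decomposition $\alpha = U\cdot\beta\cdot D\cdot\gamma$ of a Schr\"oder word beginning with $U$, and match the three cases of $\SP$ with the three cases ($\epsilon$, leading $H$, leading $U$) of the inverse.

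However, your analysis of case~(iii) contains a real misconception. You write that ``$v$ is isolated in $A_{V-(v,w]}$ because the only arrow at $v$ in $A$ goes to $w$'', and that the recursion on $A_{V-(v,w]}$ ``will immediately hit case~(ii) or case~(i)''. This is false. Since $v$ is the minimum node and all arrows are backward, every arrow incident to $v$ goes \emph{into} $v$, and there may be several of them. The definition sets $w=\min\{x:(x,v)\in A\}$, the \emph{smallest} tail among possibly many; any other arrow $(x',v)$ with $x'>w$ survives in $A_{V-(v,w]}$, so $v$ need not be isolated there and the recursion may hit case~(iii) again.

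Fortunately this does not break the construction. The paper simply applies the inverse of $\SP$ recursively to $\gamma$ on the node set $\{v_{1},v_{p+3},\ldots,v_{n+1}\}$, \emph{including} $v_{1}$, and lets the recursion decide whether $v_{1}$ is isolated. Your inverse should do the same: recurse on $\omega_{2}$ over $V-(v,w]$ with no assumption about~$v$. Your validity check then needs one extra observation: the newly added arrow $(w,v)$ can share the head $v$ with arrows $(x',v)$ coming from the $\gamma$-piece, but this is allowed (condition~(4) of Proposition~\ref{proposition_contrapositive} only forbids a head coinciding with a tail), and since $x'>w$ such arrows nest $(w,v)$ rather than cross it.
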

\begin{proof}
Given a Schr\"oder word $\alpha$ of length $2n$
and a node set 
$V = \{v_{1} < v_{2} < \cdots < v_{n+1}\}$,
the inverse map is computed recursively as follows.
If $\alpha$ is the empty word~$\epsilon$ then
the inverse image is the isolated node~$v_{1}$.
If the word $\alpha$ begins with $H$, that is,
$\alpha = H \cdot \beta$ where $\beta$ has length $2n-2$,
compute the inverse image of $\beta$ on the nodes
$\{v_{2},v_{3}, \ldots, v_{n+1}\}$
and add the node $v_{1}$ as an isolated node.
Otherwise factor $\alpha$ uniquely as
$U \cdot \beta \cdot D \cdot \gamma$,
where $\beta$ and $\gamma$ are
Schr\"oder words of lengths $2p$ and $2n-2p-2$,
respectively.
Compute the inverse image of $\beta$
on the nodes $\{v_{2},v_{3}, \ldots, v_{p+2}\}$.
Similarly, compute the inverse image of $\gamma$ on the nodes
$\{v_{1}, v_{p+3}, v_{p+4}, \ldots, v_{n+1}\}$. Take the union of these
two digraphs and add the backward arrow $(v_{p+2},v_{1})$.
\end{proof}

We now extend the bijection $\SP$ to all valid digraphs.
In order to do this we introduce a
{\em twisting operation $\tw$}
on each digraph $A$ that has a forward arrow from the least element
$v \in V$ to the largest element $w \in V$. The {\em twisted digraph}
$\tw(A)$ is a digraph on the node set $V - \{v\}$ with arrow set
$$
\tw(A)
=
A_{V - \{v\}}
\cup
\{(w,z) \: : \: (v,z) \in A\}.
$$
In other words, we remove the least node $v$ and replace each forward
arrow $(v,z)$ starting at $v$ with a backward arrow $(w,z)$.
Note that
there is no backward arrow $(z,v)$ in $A$ as $v$ is the tail of 
the forward arrow $(v,w)$.
\begin{lemma}
Let $V$ be a node set of smallest node $v$ and largest node $w$.
The twisting map is a bijection from the set of valid digraphs on the set $V$
containing the forward arrow $(v,w)$ to the set of valid digraphs on
the set $V - \{v\}$.
\label{lemma_twisting}
\end{lemma}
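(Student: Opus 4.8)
The plan is to prove that $\tw$ is a bijection by first checking that it is well-defined (i.e.\ $\tw(A)$ really is a valid digraph on $V-\{v\}$), and then exhibiting an explicit two-sided inverse. For well-definedness, I would argue through the four conditions of Proposition~\ref{proposition_contrapositive} one at a time. Since $w$ is the largest node of $V$, every new arrow $(w,z)$ with $(v,z)\in A$ is a backward arrow; moreover because $v$ was the smallest node and the tail of the forward arrow $(v,w)$, there is no arrow into $v$ in $A$, so nothing is lost by deleting $v$. The key observations are: (1) noncrossing is preserved because the forward arrow $(v,z)$ spans the interval $[v,z]$, and after the twist the backward arrow $(w,z)$ spans $[z,w]$; any arrow of $A_{V-\{v\}}$ that did not cross $(v,z)$ must have had both endpoints in $(v,z)$, outside $[v,z]$, or nested with $(v,w)$ hence with endpoints in $[v,w]$ — and one checks these positions are exactly the ones that do not cross $(w,z)$ either; (2) since all newly created arrows share the common head $w$, they pairwise nest trivially and hence do not violate the ``backward arrows do not nest forward arrows'' or ``no two arrows cross'' conditions against each other; (3) a surviving forward arrow of $A_{V-\{v\}}$ was nested by $(v,w)$ (validity of $A$ forces forward arrows to nest), so it has the form $(x,y)$ with $v<x<y<w$, and the twisted arrow $(w,z)$ with $v<z\le w$ cannot nest it; (4) the head/tail disjointness is inherited, using that $w$ was already a head in $A$ (of the forward arrow $(v,w)$) so $w$ being a head of the new arrows causes no conflict, and $v$ — the only deleted node — is no longer present.

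Next I would construct the inverse explicitly. Given a valid digraph $B$ on $V-\{v\}$, define $\tw^{-1}(B)$ on $V$ by adjoining the node $v$, adding the forward arrow $(v,w)$, and replacing each backward arrow $(w,z)\in B$ with head $w$ by the forward arrow $(v,z)$, while keeping all arrows of $B$ not incident to $w$ as a head. One must check that $\tw^{-1}(B)$ is valid (the same case analysis as above, run in reverse: the forward arrows $(v,z)$ all nest since they share tail $v$, and they nest $(v,w)$ since $z\le w$; noncrossing and the remaining conditions transfer back), and that $\tw\circ\tw^{-1}$ and $\tw^{-1}\circ\tw$ are the identity. The latter is essentially formal once the bookkeeping of which arrows are touched is set up correctly: $\tw$ removes $v$ and recolors the arrows out of $v$ as arrows into $w$; $\tw^{-1}$ reinstates $v$ and recolors the arrows into $w$ back to arrows out of $v$. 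The one subtlety to flag in $\tw^{-1}$ is that $B$ might a priori contain a backward arrow $(w,z)$ that did \emph{not} come from twisting — but this is handled by the convention that $\tw^{-1}$ simply recolors \emph{all} arrows of $B$ with head $w$; no ambiguity arises because after adding $(v,w)$, the node $w$ must be a head (never a tail) by the head/tail disjointness condition, so every arrow incident to $w$ in $\tw^{-1}(B)$ either equals $(v,w)$ or came from recoloring.

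The main obstacle I anticipate is the noncrossing verification — condition~(1) of Proposition~\ref{proposition_contrapositive} — because it requires carefully enumerating the possible relative positions (disjoint, nested, or with an endpoint equal) of an arrow in $A_{V-\{v\}}$ against the forward arrow $(v,z)$, and confirming that each such position maps to a non-crossing position against the backward arrow $(w,z)$, and conversely. All the other three conditions follow almost immediately from the structural fact that the created (resp.\ destroyed) arrows all share the extreme node $w$ (resp.\ $v$) as head (resp.\ tail), which makes them pairwise nested and keeps them clear of the head/tail disjointness obstruction. I would present the well-definedness and the inverse side by side, since they are mirror images, and close by remarking that the bijection pairs the forward arrow $(v,w)$ of a valid digraph with ``nothing'' on the $B$-side, which is precisely what we will need when extending $\SP$ to valid digraphs with forward arrows.
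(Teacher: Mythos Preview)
Your approach matches the paper's: verify that $\tw(A)$ satisfies the four conditions of Proposition~\ref{proposition_contrapositive}, then exhibit an explicit inverse. Your plan to check that $\tw^{-1}(B)$ is itself valid and that the two maps are genuine two-sided inverses is in fact more thorough than the paper's own proof, which only writes down the inverse formula and asserts injectivity.

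There is, however, a systematic head/tail confusion running through your proposal. In the paper's convention $(i,j)$ is the arrow \emph{from} $i$ \emph{to} $j$, so $i$ is the tail and $j$ is the head. Hence the new arrows $(w,z)$ created by twisting have $w$ as their \emph{tail}, not their head; likewise ``backward arrow $(w,z)\in B$ with head $w$'' should read ``with tail $w$.'' This slip is not merely cosmetic in your treatment of condition~(4): you argue that ``$w$ was already a head in $A$ \dots\ so $w$ being a head of the new arrows causes no conflict,'' but in $\tw(A)$ the node $w$ becomes a \emph{tail}, and the real danger is that $w$ might simultaneously remain a head via some surviving forward arrow $(x,w)\in A_{V-\{v\}}$ with $x\neq v$. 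That possibility is excluded not by condition~(4) for $A$ but by condition~(2): if $A$ contained both $(v,z)$ with $z<w$ and $(x,w)$ with $v<x<w$, these two forward arrows would fail to nest. (The paper disposes of~(4) with a terse ``holds directly,'' so this observation is implicit there too.) Relatedly, in your check of~(3) a surviving forward arrow $(x,y)$ may have $y=w$, not only $y<w$; this boundary case is harmless because it forces there to be no new arrows $(w,z)$ at all, but it should be acknowledged. Once the tail/head slips are fixed and the missing appeal to condition~(2) is inserted for~(4), your outline goes through.
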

\begin{proof}
We claim that the twisted digraph $\tw(A)$ is a valid digraph.
Begin to note that the restriction~$A_{V - \{v\}}$ is a valid digraph.
If there are no forward arrows in $A$ of the form $(v,z)$ where $z < w$,
the equality $\tw(A) = A_{V - \{v\}}$ holds and the claim is true.
Hence assume that there are forward arrows of the form $(v,z)$.
We need to show that adding the backward arrow $(w,z)$
the digraph is still valid.
We verify conditions (1) through (4) of
Proposition~\ref{proposition_contrapositive} in order.
If the arrow $(w,z)$ crosses an arrow~$(x,y)$
then the arrow $(v,z)$ already crossed this arrow in $A$,
verifying condition~(1).
Since we did not introduce any new forward arrows, 
condition~(2) holds vacuously.
Assume that the backward arrow $(w,z)$ nests a forward arrow $(x,y)$.
Then the two forward arrows $(v,z)$ and $(x,y)$ did not nest in $A$,
verifying~(3). The last condition~(4) holds directly, proving the claim.

The twisting map is one-to-one. Its inverse is given by
$$
\tw^{-1}(B)
=
\{(v,w)\}
\cup
B_{V \cap (v,w)}
\cup
\{(v,z) \: : \: (w,z) \in B\} .
$$
Note that in case the node $w$ was a `tail node'
the inverse map switches it back to being a `head node'.
\end{proof}

We now extend the bijection $\SP$ to a map $\DP$
which applies to all valid digraphs.
\begin{itemize}
\item[(i)]
If the valid digraph $A$ has no forward arrows,
let $\DP(A) = \SP(A)$.

\item[(ii)]
If the valid digraph $A$ has a forward arrow,
let $(x,y)$ be the forward arrow which nests the other forward arrows.
Let $v$ and $w$ be the minimal, respectively, the
maximal node of the node set~$V$,
that is,
$v = \min(V)$ and $w = \max(V)$.
Observe that $v \leq x < y \leq w$,
that is,
the two digraphs
$A_{V \cap [v,x]}$ and $A_{V \cap [y,w]}$
have no forward arrows.
Define $\DP(A)$ to be the concatenation
$$
\DP(A)
=
\SP(A_{V \cap [v,x]})
\cdot D \cdot  
\DP(\tw(A_{V \cap [x,y]}))
\cdot U \cdot  
\SP(A_{V \cap [y,w]}) .
$$
\end{itemize}
As an extension of Lemma~\ref{lemma_easy_observation}
we have the following lemma.
\begin{lemma}
Let $A$ be a valid digraph on a set $V$ of $n+1$ nodes
and assume that $A$ consists of $k$ arrows.
Then the balanced Delannoy word $\DP(A)$ has length $2 n$
and has exactly $k$ copies of the letters~$U$ and~$D$, respectively.
\end{lemma}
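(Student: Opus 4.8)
The plan is to prove the statement by induction on $n$, following the recursive definition of $\DP$ case by case. The base case $n=0$ is immediate: then $V$ is a single node, $A$ has no arrows, and $\DP(A)=\SP(A)=\epsilon$, a word of length $0$ with no occurrences of $U$ or $D$. In the inductive step, if the valid digraph $A$ has no forward arrows, then $k$ counts only backward arrows and $\DP(A)=\SP(A)$, so the claim is exactly Lemma~\ref{lemma_easy_observation}.

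The substantive case is when $A$ has a forward arrow. Let $(x,y)$ be the forward arrow nesting all other forward arrows and let $v=\min(V)$, $w=\max(V)$, so that $v\leq x<y\leq w$. The first key step is to show that every arrow of $A$ lies inside exactly one of the three intervals $[v,x]$, $[x,y]$, $[y,w]$. An arrow straddling $x$ (one endpoint below $x$, the other in $(x,y]$) or straddling $y$ (one endpoint in $[x,y)$, the other above $y$) either crosses $(x,y)$, violating condition~(1) of Proposition~\ref{proposition_contrapositive}, or shares an endpoint with $(x,y)$ in a way that either violates condition~(4) or else is a forward arrow strictly nesting $(x,y)$, contradicting that $(x,y)$ is outermost. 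An arrow with one endpoint below $x$ and the other above $y$ is, if forward, a forward arrow nesting $(x,y)$ (again impossible), and if backward, a backward arrow nesting the forward arrow $(x,y)$, violating condition~(3). Since the three intervals overlap only in the points $x$ and $y$ and a loop is not an arrow, this yields a disjoint partition of the arrow set into $k_{1}$ arrows inside $[v,x]$, $k_{2}$ arrows inside $[x,y]$ (this count includes $(x,y)$), and $k_{3}$ arrows inside $[y,w]$, with $k=k_{1}+k_{2}+k_{3}$. Moreover $A_{V\cap[v,x]}$ and $A_{V\cap[y,w]}$ consist only of backward arrows, because any forward arrow $(x',y')$ of $A$ satisfies $x\leq x'<y'\leq y$.

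Next I would track sizes and arrow counts. Writing $a$, $b$, $c$ for the number of nodes of $V\cap[v,x]$, $V\cap[x,y]$, $V\cap[y,w]$, the overlaps at $x$ and $y$ give $a+b+c=(n+1)+2=n+3$; since $a,c\geq 1$ we have $b\leq n+1$, so the twisted digraph below has strictly fewer nodes than $A$. By Lemma~\ref{lemma_twisting}, $\tw(A_{V\cap[x,y]})$ is a valid digraph on the $b-1$ nodes $V\cap(x,y]$; the twist sends the arrow $(x,y)$ to a loop, which is discarded, and relabels every other arrow of $A_{V\cap[x,y]}$ bijectively, so $\tw(A_{V\cap[x,y]})$ has exactly $k_{2}-1$ arrows. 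Applying Lemma~\ref{lemma_easy_observation} to the two $\SP$-blocks and the induction hypothesis to the $\DP$-block: $\SP(A_{V\cap[v,x]})$ has length $2(a-1)$ with $k_{1}$ copies each of $U$ and $D$; $\DP(\tw(A_{V\cap[x,y]}))$ has length $2(b-2)$ with $k_{2}-1$ copies each; and $\SP(A_{V\cap[y,w]})$ has length $2(c-1)$ with $k_{3}$ copies each. Concatenating as in the definition of $\DP$ and inserting one $D$ and one $U$, the total length is $2(a-1)+1+2(b-2)+1+2(c-1)=2(a+b+c)-6=2n$, the number of $U$'s is $k_{1}+(k_{2}-1)+1+k_{3}=k$, and likewise the number of $D$'s is $k_{1}+1+(k_{2}-1)+k_{3}=k$, which closes the induction.

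The main obstacle I anticipate is the case analysis in the second paragraph: carefully ruling out every arrow that could straddle $x$ or $y$ or span from left of $x$ to right of $y$, using precisely the outermost property of $(x,y)$ together with conditions~(1), (3) and (4) of Proposition~\ref{proposition_contrapositive}, and confirming that the twist deletes exactly one arrow. Once that structural fact is in place, the length and letter counts are a routine bookkeeping exercise built on Lemma~\ref{lemma_easy_observation} and Lemma~\ref{lemma_twisting}.
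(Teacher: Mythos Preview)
Your proof is correct and follows essentially the same approach as the paper's: induct on $n$, reduce the no-forward-arrow case to Lemma~\ref{lemma_easy_observation}, and in the forward-arrow case partition the arrows among the three intervals, use the twist to drop one node and one arrow, and then do the bookkeeping. You are more explicit than the paper in justifying the arrow partition via Proposition~\ref{proposition_contrapositive} and in checking that the induction is well-founded, but the argument is the same.
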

\begin{proof}
We only have to check the second case of the map $\DP$.
Assume that
$V \cap [v,x]$ and $V \cap [y,w]$
have cardinalities $a$, respectively $b$.
Then the middle part
$V \cap [x,y]$ has size $n-a-b+3$.
Thus 
$\tw(A_{V \cap [x,y]})$ has $n-a-b+2$ nodes.
Hence the total length is
$2 \cdot (a-1) + 2 \cdot (n-a-b+1) + 2 \cdot (b-1) + 2 = 2 n$.
For the second statement it is again enough to
check the second case.
Assume that the restricted digraphs
$A_{V \cap [v,x]}$ and $A_{V \cap [y,w]}$
have $c$, respectively $d$ arrows.
Since there is no arrow nesting the forward arrow~$(x,y)$,
the middle digraph has $k-c-d$ arrows.
Thus the twisted digraph has one less arrow,
namely $k-c-d-1$.
Hence the total number of $U$ letters
is $c + (k-c-d-1) + d + 1 = k$, proving the second claim.
\end{proof}

We now extend Proposition~\ref{proposition_backwards}
to all valid digraphs.
\begin{theorem}
The map $\DP$ is a bijection between
the set of all valid digraphs 
on a set of $n+1$ nodes and the set of all balanced Delannoy words of
length $2n$.
\label{theorem_forwards_and_backwards}
\end{theorem}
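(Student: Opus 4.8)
The plan is to prove bijectivity by induction on the number of nodes, by exhibiting an explicit two-sided inverse. By the preceding lemma, $\DP$ already sends a valid digraph on $n+1$ nodes to a word with equally many $U$'s and $D$'s and the remaining letters $H$, and every such word is a balanced Delannoy word of length $2n$; so it suffices to construct a well-defined inverse and check both compositions are the identity.

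The crucial observation is that the two cases in the definition of $\DP$ are distinguished by whether the resulting path ever goes below the horizontal axis. In case~(i), $\DP(A)=\SP(A)$ is a Schr\"oder word by Proposition~\ref{proposition_backwards}. In case~(ii), $\SP(A_{V\cap[v,x]})$ is a Schr\"oder word ending at height~$0$, so the next letter $D$ forces the path to height~$-1$; hence the output is not a Schr\"oder word. Therefore, given a balanced Delannoy word $\alpha$ of length $2n$ on a node set $V=\{v_{1}<\cdots<v_{n+1}\}$: if $\alpha$ is a Schr\"oder word we set $\DP^{-1}(\alpha)=\SP^{-1}(\alpha)$ via Proposition~\ref{proposition_backwards}; otherwise the first step of $\alpha$ reaching height~$-1$ must be a $D$, and we let $\beta_{1}$ be the (necessarily even-length) prefix preceding it. Let the last position of $\alpha$ at height~$-1$ occur after a prefix $\beta_{1}\cdot D\cdot \beta_{2}$; the step immediately following must be a $U$, since an $H$ or $D$ there would contradict maximality, and $\beta_{2}$ is itself a balanced Delannoy word while the remaining suffix $\beta_{3}$ is a Schr\"oder word. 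Writing $|\beta_{1}|=2(a-1)$ and $|\beta_{3}|=2(b-1)$, the lengths determine $x:=v_{a}$ and $y:=v_{n+2-b}$, and one forms $\DP^{-1}(\alpha)$ as the union of $\SP^{-1}(\beta_{1})$ on $V\cap[v,x]$, the digraph $\tw^{-1}(\DP^{-1}(\beta_{2}))$ on $V\cap[x,y]$ (where $\DP^{-1}(\beta_{2})$ is computed recursively on $V\cap(x,y]$ and $\tw^{-1}$ is supplied by Lemma~\ref{lemma_twisting}), and $\SP^{-1}(\beta_{3})$ on $V\cap[y,w]$.

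Two things then require verification. First, well-foundedness of the recursion: the middle piece $\DP^{-1}(\beta_{2})$ lives on $|V\cap(x,y]|=n-a-b+2<n+1$ nodes since $a,b\geq 1$, so the induction strictly decreases. Second, and this is the heart of the matter, that this union is a valid digraph and that $\DP^{-1}$ and $\DP$ are mutually inverse. The key combinatorial fact is that no arrow of a valid digraph can straddle the cut points $x$ or $y$: a backward arrow with one endpoint in $[v,x)$ and one in $(x,y)$ would cross $(x,y)$; a backward arrow spanning from $[v,x)$ to $(y,w]$ would nest the forward arrow $(x,y)$; and any arrow with an endpoint equal to $x$ (respectively $y$) pointing outside $[x,y]$ would make a head coincide with a tail — all forbidden by Proposition~\ref{proposition_contrapositive}. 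Consequently, for any valid digraph whose outermost forward arrow is $(x,y)$, the induced subgraphs on $[v,x]$, $[x,y]$, $[y,w]$ partition its arrows, which is exactly what yields $\DP^{-1}(\DP(A))=A$; the reverse composition $\DP(\DP^{-1}(\alpha))=\alpha$ follows by tracking the same decomposition together with Lemma~\ref{lemma_twisting} and the inductive hypothesis on the middle block. I expect this validity-and-reassembly step — confirming that the height-based cut of the Delannoy word coincides with the "outermost forward arrow" cut of the digraph, with the shared boundary nodes $x$ and $y$ handled correctly — to be the main obstacle; the remaining length bookkeeping is already contained in Lemma~\ref{lemma_easy_observation} and its extension.
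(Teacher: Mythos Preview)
Your proposal is correct and takes essentially the same approach as the paper: both construct the inverse by distinguishing Schr\"oder words from non-Schr\"oder words, and in the latter case factoring $\alpha=\beta\cdot D\cdot\gamma\cdot U\cdot\delta$ via the first and last visits to height~$-1$, then reassembling using $\SP^{-1}$, $\tw^{-1}$, and a recursive call on the middle block. Your write-up actually supplies more justification than the paper does---in particular the ``no arrow straddles $x$ or $y$'' argument and the check that the recursion terminates---whereas the paper simply describes the inverse construction and leaves these verifications implicit.
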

\begin{proof}
The inverse is computed as follows.
Let $\alpha$ be a balanced Delannoy word of length $2n$
and $V$ be a node set $\{v_{1} < v_{2} < \cdots < v_{n+1}\}$.
If the Delannoy word $\alpha$ is a Schr\"oder word, apply the inverse map of
Proposition~\ref{proposition_backwards}.
Otherwise, we can factor the Delannoy word $\alpha$ uniquely 
as $\beta \cdot D \cdot \gamma \cdot U \cdot \delta$,
where $\gamma$ is a balanced Delannoy word,
and $\beta$ and $\delta$ are Schr\"oder words.
Note that the factor $\beta \cdot D$ is uniquely
determined by the fact it is the shortest initial
segment in which the number of $D$ letters exceeds the number of $U$ letters,
in other words, the encoded path ends with the first down
step going below the horizontal axis.
By a symmetric argument, 
$U \cdot \delta$ is the shortest final segment with
one more $U$ than the number of $D$'s.
Assume that $\beta$, $\gamma$ and
$\delta$ have lengths $2p$, $2q$ and~$2n-2p-2q-2$, respectively.
Apply the inverse map from the proof of
Proposition~\ref{proposition_backwards}
to the words $\beta$ and $\delta$
to obtain
digraphs on $\{v_{1},v_{2}, \ldots, v_{p-1}\}$,
respectively
$\{v_{p+q+2}, v_{p+q+3}, \ldots, v_{n+1}\}$.

By recursion, apply the inverse of $\DP$ to $\gamma$ to obtain a valid
digraph on the node set $\{v_{p+2}, v_{p+3}, \ldots$, $v_{p+q+2}\}$.
Then apply the inverse of the twisting map to the digraph thus
obtained. Finally, take the union of these three digraphs.
\end{proof}

\begin{figure}[t]
\begin{tikzpicture}
\foreach \x in {1,...,11}
\node at (\x,0) {\small $\x$};

\backwardarrow{(1+0.1,0.1)}{(3-0.1,0.1)}
\backwardarrow{(5+0.1,0.1)}{(6-0.1,0.1)}
\backwardarrow{(5+0.1,0.3)}{(7-0.1,0.3)}
\backwardarrow{(8+0.1,0.1)}{(9-0.1,0.1)}
\backwardarrow{(9+0.1,0.1)}{(10-0.1,0.1)}
\backwardarrow{(9+0.1,0.3)}{(11-0.1,0.3)}

\end{tikzpicture}
\caption{The modified set of arrows obtained from the set in
  Figure~\ref{figure_valid}.}
\label{figure_twisted}
\end{figure}
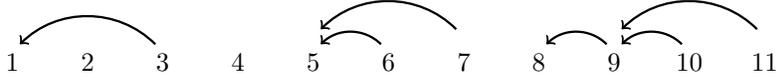

We now give non-recursive description of the bijection $\DP$.
First we encode a valid digraph with a multiset of
indexed letters.
Recall that a {\em weakly connected component} of a digraph
is a connected component in the graph obtained by
disregarding the direction of the arrows.
In our valid digraphs, the weakly connected components
are trees where each node is either a source or a sink.
\begin{definition}
\label{definition_multiset}  
Given a valid digraph $A$ on the node set $V$,
define the {\em associated multiset $M(A)$}
with elements from the set of indexed letters
$\{D_{x}, U_{x}, H_{x} \: : \: x \in \Ppp\}$
by the following three steps:
\begin{enumerate}
\item
For each $x < \max(V)$ which is a maximum element in
a weakly connected component 
we add the letter $H_{x}$ 
to the multiset $M(A)$.

\item
For each tail $x$ of a forward arrow,
consider the set
$\Head(x)=\{y>x \: : \: (x,y)\in A\}$, that is, the set of
heads of forward arrows with tail $x$.
Add a copy of the letter~$D_{x}$ to
$M(A)$,
also add a copy of the letter~$U_{w}$ for
$w=\max(\Head(x))$.
Remove the forward arrow~$(x,w)$.
For each remaining $y \in \Head(x)$ that is less than
$w$, replace the forward arrow $(x,y)$ with the backward arrow
$(w,y)$. The resulting set of arrows has only backward arrows.  

\item
For each head $y$ of a backward arrow,
consider the set
$\Tail(y) = \{x>y \: : \: (x,y) \in A\}$,
that is, the set of
tails of backward arrows with head $y$.
Add a copy of $U_{y}$ to $M(A)$.
Also add a copy of $D_{x}$ for $x \in \Tail(y)$
and add a copy of $U_{x}$ for
all but the maximum element of $\Tail(y)$.
\end{enumerate}
\end{definition}

\begin{example}
{\rm
For the valid set of arrows $A$ shown in
Figure~\ref{figure_valid},
in the first step we add the letters $H_{2}$ and $H_{7}$ to $M(A)$.
In the second step we add the letters
$D_{3}$, $U_{9}$, $D_{4}$, and $U_{7}$ to $M(A)$,
and we remove the forward arrows $(3,9)$ and $(4,7)$.
We also replace $(3,8)$ with $(9,8)$ and $(4,5)$ with $(7,5)$. 
We obtain the set of backward arrows shown in
Figure~\ref{figure_twisted}.
Note that this set of arrows does not need to be
valid anymore: in our example, $9$ is the head of $(10,9)$ and $(11,9)$
and it is also the tail of $(9,8)$. 
Finally, in step three we add the letters
$U_{1}$, $D_{3}$, $U_{5}$, $D_{6}$,
$U_{6}$, $D_{7}$, $U_{8}$, $D_{9}$,
$U_{9}$, $D_{10}$, $U_{10}$, and $D_{11}$ to $M(A)$. 
We end up with the multiset
$$
M(A)=
\{
U_{1},
H_{2},
D_{3},
D_{3}, 
D_{4},
U_{5}, 
D_{6},
U_{6}, 
D_{7}, 
U_{7}, 
H_{7},
U_{8}, 
D_{9},
U_{9},
U_{9}, 
D_{10}, 
U_{10}, 
D_{11}
\} .
$$
}
\end{example}
Define a linear order on the
indexed letters by the inequalities
$D_{x} < U_{x} < H_{x} < D_{x+1}$
for all positive integers $x$.
We obtain the lattice path as follows.
\begin{proposition}
The balanced Delannoy word $\DP(A)$ is
obtained from the multiset $M(A)$ by reading the
indexed letters in order and then omitting the subscripts.
\label{proposition_bijection_explicit}
\end{proposition}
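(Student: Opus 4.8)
The plan is to prove the identity by induction on $|V|=n+1$, showing that the word $r(A)$ obtained from $M(A)$ by listing its indexed letters in increasing order and deleting subscripts satisfies the very recursions that define $\SP$ and $\DP$. The base case $|V|=1$ is immediate: $M(A)=\emptyset$ and $\DP(A)=\SP(A)=\epsilon$. A fact used throughout is that $M$ and the reading operation are phrased purely in terms of the order on the node set, so $r(A)$ is unchanged if $V$ is relabeled by any order isomorphism; this is what will let us identify a segment of $r(A)$ with $r$ of a sub-digraph even when the latter's index set has been shifted.

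First I would treat the case where $A$ has no forward arrow, so that $\DP(A)=\SP(A)$ and $M(A)$ comes only from Steps~(1) and~(3) of Definition~\ref{definition_multiset}. Let $v=\min(V)$. If $v$ is isolated, then $H_v$ is the least letter of $M(A)$ and removing it leaves exactly $M(A_{V-\{v\}})$, so $r(A)=H\cdot r(A_{V-\{v\}})$, which equals $\SP(A)$ by induction. If $v$ is not isolated, then, being the minimum, it is the head of a nonempty set of backward arrows with tail set $\{x_1<\cdots<x_m\}$, and Step~(3) inserts into $M(A)$ the skeleton of letters $U_v,D_{x_1},U_{x_1},D_{x_2},\dots,D_{x_{m-1}},U_{x_{m-1}},D_{x_m}$ (and $U_v$ is the least letter, since $v$, being a head, is neither a component maximum nor a tail). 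I would then argue that the noncrossing and ``no head is a tail'' conditions confine every other arrow of $A$ to one of the intervals $(x_{i-1},x_i]$ (with $x_0:=v$) or to $(x_m,\max(V)]$, so that reading $M(A)$ produces: $U$, then the reading of $M(A_{V\cap(x_0,x_1]})$, then $D$, then $U$, then the reading of $M(A_{V\cap(x_1,x_2]})$, then $D$, and so on, and finally after the last $D$ an $H$ exactly when $x_m<\max(V)$, followed by the reading of $M(A_{V\cap(x_m,\max(V)]})$. This is exactly what the $\SP$-recursion at $v$ unfolds to, and the inductive hypothesis closes the case.

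Next suppose $A$ has a forward arrow, and let $(x,y)$ be the one nesting all the others, so $v\le x<y\le w:=\max(V)$. The key observation is that Step~(2) of Definition~\ref{definition_multiset} applied at the tail $x$ is precisely the twisting map $\tw$ applied to $A_{V\cap[x,y]}$: since $(x,y)$ nests all forward arrows we have $y=\max\Head(x)$, so Step~(2) emits $D_x$ and $U_y$, deletes $(x,y)$, and replaces each other forward arrow $(x,z)$ by the backward arrow $(y,z)$, leaving the digraph $\tw(A_{V\cap[x,y]})$ on the node set $(x,y]$. Using the linear order $D_x<U_x<H_x<D_{x+1}$, the nesting of forward arrows, and ``no head is a tail'' (which forces $A_{V\cap[v,x]}$ and $A_{V\cap[y,w]}$ to have no forward arrows and makes $y$ isolated in $A_{V\cap[y,w]}$), I would show that the reading of $M(A)$ factors as: the reading of $M(A_{V\cap[v,x]})$, then $D$ (the Step-(2) copy of $D_x$), then the reading of $M(\tw(A_{V\cap[x,y]}))$, then $U$ (the Step-(2) copy of $U_y$), then the reading of $M(A_{V\cap[y,w]})$. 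By the previous case the first and last blocks are $\SP(A_{V\cap[v,x]})$ and $\SP(A_{V\cap[y,w]})$, and by Lemma~\ref{lemma_twisting} together with the inductive hypothesis the middle block is $\DP(\tw(A_{V\cap[x,y]}))$. This is exactly the defining recursion for $\DP(A)$, completing the induction.

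The real work, and what I expect to be the main obstacle, is the bookkeeping hidden in the phrases ``I would show that the reading factors as $\dots$'': one must verify that each letter of $M(A)$ lands in the asserted block with nothing leaking across a boundary. Two features make this delicate. First, $M$ of a sub-digraph appears inside $M(A)$ only after an order-isomorphic relabeling of its node set — for instance, in the no-forward case the leading $U$ of $\SP(A_{V-(v,x_1]})$ is realized in $M(A)$ as the skeleton letter $U_{x_1}$, not as a literal $U_v$ — so the argument must be carried out at the level of the reading word rather than the multiset. Second, the linear order has ties: a letter $D_x$ or $U_x$ may be created several times by distinct arrows or distinct steps, and one must keep track of which copy serves as a block separator; this is harmless for the resulting word because tied letters are literally equal. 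Confining arrows to the prescribed intervals and verifying the ``isolated'' and ``no forward arrow'' claims for the edge blocks are precisely the points where the crossing, nesting, and head-versus-tail conditions of Propositions~\ref{proposition_across} and~\ref{proposition_contrapositive} get used.
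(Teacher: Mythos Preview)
Your inductive strategy is sound and is exactly the kind of argument the paper has in mind: the paper itself gives only a two-sentence outline (prove the backward-only case first, then observe that Step~(2) is twisting and that twisting preserves the weakly connected components and the set of heads), and explicitly leaves the remaining details to the reader. Your proposal is a legitimate fleshing-out of that outline, organized as an induction on $|V|$ following the $\SP$/$\DP$ recursions; the key observation you isolate, that Step~(2) at the outermost forward tail $x$ is literally the twisting map on $A_{V\cap[x,y]}$, is the same one the paper uses.

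There is one small factual slip. You assert that ``no head is a tail'' forces $y$ to be isolated in $A_{V\cap[y,w]}$. This is not true: $y$ is the head of the forward arrow $(x,y)$, so it is not a tail, but nothing forbids a backward arrow $(a,y)$ with $a>y$; such an arrow neither crosses nor nests $(x,y)$ and does not violate condition~(4) of Proposition~\ref{proposition_contrapositive}. Concretely, with $V=\{1,2,3,4\}$ and $A=\{(1,2),(3,2)\}$ the node $y=2$ is not isolated in $A_{\{2,3,4\}}$. Fortunately your argument does not actually need this claim: if $y$ is isolated in $A_{V\cap[y,w]}$ then $\SP(A_{V\cap[y,w]})$ begins with $H$ and the matching $H_y$ in $M(A)$ comes from Step~(1) (since $y$ is then the maximum of its component in $A$); if $y$ is not isolated there, then $\SP(A_{V\cap[y,w]})$ begins with $U$ and the matching $U_y$ in $M(A)$ comes from Step~(3). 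In either case the Step-(2) letter $U_y$ sits immediately before the block realizing $\SP(A_{V\cap[y,w]})$, and the factorization you describe goes through. So simply drop the ``isolated'' claim and treat the two cases for the first letter of $\SP(A_{V\cap[y,w]})$ separately.
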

For the set of arrows shown in Figure~\ref{figure_valid}, we
obtain the word $UHDDDUDUDUHUDUUDUD$. The lattice path encoded by this
word is shown in Figure~\ref{figure_lattice_path}. 
\begin{figure}[t]
\begin{tikzpicture}
\draw (-5,0) node (v1) {} -- (-4.5,0.5) -- (-3.5,0.5) -- (-3,0) -- (-2.5,-0.5) -- (-2,-1) -- (-1.5,-0.5) -- (-1,-1) -- (-0.5,-0.5) -- (0,-1) -- (0.5,-0.5) -- (1.5,-0.5) -- (2,0) -- (2.5,-0.5) -- (3,0) -- (3.5,0.5) -- (4,0) -- (4.5,0.5) -- (5,0) node (v2) {};
\draw [dashed] (v1) edge (v2);
\node at (-5,-0.2) {(0,0)};
\node at (-3,-0.5) {(4,0)};
\node at (3.2,-0.3) {(16,0)};
\node at (5,-0.2) {(20,0)};
\end{tikzpicture}
\caption{The lattice path associated to the digraph
in Figure~\ref{figure_valid}.}
\label{figure_lattice_path}
\end{figure}

A quick outline of a proof of
Proposition~\ref{proposition_bijection_explicit}
is as follows.
First prove the statement when there are no forward arrows.
In this case, each nonmaximal element $y$ of $\Tail(x)$
contributes two consecutive letters $D_{y} U_{y}$.
Next, observe that
a digraph and and its twisted digraph have
the same weakly connected components.
Furthermore, the arrows that moved under the
twisted operation still have the same set of heads.
Hence, after recording the horizontal steps in step~(1)
we may perform all twisting operations simultaneously
in step~(2).
We leave the remaining details to the reader.

\section{Concluding Remarks}

In recent paper, Cellini and Marietti~\cite{Cellini_Marietti_abelian} used
abelian ideals to produce a triangulation for various root polytopes.
In the case of type $A$, their
construction yields once again a lexicographic triangulation of
each face.  Restricting to the positive roots yields Gelfand, Graev
and Postnikov's anti-standard tree bases for the type $A$ positive
root polytope.  Is there an ideal corresponding to the reverse
lexicographic triangulation?

The $h$-vector of Simion's type~$B$ associahedron
may be computed from the $f$-vector using
elementary operations on binomial coefficients;
see~\cite[Corollary 1]{Simion}. 
\begin{lemma}[Simion]
The $h$-vector $(h_{0}, h_{1}, \ldots, h_{n})$ of
Simion's type~$B$ associahedron~$\Gamma_{n}^{B}$ satisfies 
$$
h_{i}=\binom{n}{i}^{2}\quad \mbox{for $0\leq i\leq n$}.
$$
\end{lemma}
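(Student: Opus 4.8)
The plan is to derive the $h$-vector from the $f$-vector via the standard conversion formula
\[
h_i = \sum_{j=0}^{i} (-1)^{i-j}\binom{n-j}{n-i} f_{j-1},
\]
valid for the boundary complex of an $n$-dimensional simplicial polytope, and then substitute $f_{j-1}=\binom{n+j}{j}\binom{n}{j}$ from Lemma~\ref{lemma_any_pull}. So the task reduces to proving the binomial identity
\[
\sum_{j=0}^{i} (-1)^{i-j}\binom{n-j}{n-i}\binom{n+j}{j}\binom{n}{j}
=
\binom{n}{i}^{2}.
\]
First I would rewrite $\binom{n-j}{n-i}\binom{n}{j}=\binom{n}{i}\binom{i}{j}$, pull the factor $\binom{n}{i}$ out of the sum, and reduce the claim to $\sum_{j=0}^{i}(-1)^{i-j}\binom{i}{j}\binom{n+j}{j}=\binom{n}{i}$.

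The remaining sum is a classical one: I would recognize $\sum_{j}(-1)^{i-j}\binom{i}{j}\binom{n+j}{j}$ as a finite difference of the polynomial sequence $a_j=\binom{n+j}{j}=\binom{n+j}{n}$, namely $(\Delta^{i}a)_0$ up to sign, and evaluate it. Concretely, using $\binom{n+j}{n}=\binom{-n-1}{j}(-1)^j$ and the Vandermonde-type convolution, one gets $\sum_{j=0}^{i}(-1)^{i-j}\binom{i}{j}\binom{n+j}{j}=\binom{n}{i}$; alternatively this is the coefficient extraction $[x^i]\,(1+x)^n$ obtained by writing $\binom{n+j}{j}=[x^i]\,x^{i-j}(1-x)^{-n-1}$ and summing the geometric-like series against $(1-x)^i$. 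Either route is a short generating-function or Zeilberger-style verification.

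Alternatively, and perhaps more in the spirit of the paper, I would give a combinatorial proof that avoids binomial identities altogether: using the bijection $\DP$ of Theorem~\ref{theorem_forwards_and_backwards} between faces of $\Gamma_n^B$ and balanced Delannoy words of length $2n$, together with the standard fact that for a shellable simplicial complex $h_i$ counts facets according to the size of their restriction set. One would identify, under $\DP$, the facets (top-dimensional faces, corresponding to Delannoy words with exactly $n$ steps of type $U$, $D$, or $H$ — that is, actual $\pm$-lattice paths of length $2n$) and show the restriction statistic corresponds to the number of down steps among the first $n$ steps, or some equivalent statistic whose distribution is $\binom{n}{i}^2$ by a direct two-line-path argument. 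The main obstacle in this second approach is pinning down exactly which shelling order on the facets of the pulling triangulation realizes the desired restriction statistic and matching it cleanly through $\DP$; the main obstacle in the first approach is merely bookkeeping with the alternating sum, which is routine. I would therefore present the algebraic derivation as the primary proof, deriving the identity $\sum_{j=0}^{i}(-1)^{i-j}\binom{i}{j}\binom{n+j}{j}=\binom{n}{i}$ and citing Lemma~\ref{lemma_any_pull} for the $f$-vector input.
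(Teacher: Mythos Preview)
Your proposal is correct and matches what the paper indicates: the paper does not actually prove this lemma but states that the $h$-vector ``may be computed from the $f$-vector using elementary operations on binomial coefficients'' and cites Simion, which is precisely the algebraic route you carry out in detail. Your secondary, combinatorial suggestion via a shelling is also in line with the paper's own closing remark that ``one would like to find a bijective proof'' and that ``one plausible way of attack would be to find an explicit shelling of $\Gamma^{B}_{n}$''---so the paper flags that direction as open rather than completed.
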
    
\noindent
One would like to find
a bijective proof of this result.
One plausible way of attack would be to find
an explicit shelling of $\Gamma^{B}_{n}$.

Finally, are there other interesting simplicial polytopes that
can be better understood as pulling triangulations of less
complicated polytopes?

\section*{Acknowledgments}

The first author was partially funded by
the National Security Agency grant H98230-13-1-028.
This work was partially supported by two grants from the
Simons Foundation
(\#245153 to G\'abor Hetyei and \#206001 to Margaret Readdy).
The authors thank the Princeton University Mathematics Department
where this research was initiated. 

\newcommand{\journal}[6]{{\sc #1,} #2, {\it #3} {\bf #4} (#5), #6.}
\newcommand{\book}[4]{{\sc #1,} ``#2,'' #3, #4.}
\newcommand{\bookf}[5]{{\sc #1,} ``#2,'' #3, #4, #5.}

\end{document}